\documentclass[11pt]{article}

\evensidemargin0cm \oddsidemargin0cm \textwidth16cm
\textheight23cm \topmargin-2cm

\usepackage{amsmath}
\usepackage{amsthm}
\usepackage{amsfonts}
\usepackage{bbm}
\usepackage{amssymb}
\usepackage{graphicx}
\usepackage{color}

\usepackage{blindtext}
\usepackage{enumitem}

\usepackage{fancybox}
\setlength{\fboxsep}{1.5ex}
\newlength{\querylen}
\setlength{\querylen}{\textwidth}
\addtolength{\querylen}{-2\fboxsep}

\newcommand{\mmp}{\mathbb{P}}

\newcommand{\od}{\overset{{\rm d}}{=}}
\newcommand{\dod}{\overset{{\rm d}}{\to}}
\newcommand{\tp}{\overset{{\rm P}}{\to}}

\newcommand{\me}{\mathbb{E}}
\newcommand{\E}{\mathbb{E}}

\newcommand{\mr}{\mathbb{R}}
\newcommand{\mn}{\mathbb{N}}

\DeclareMathOperator{\1}{\mathbbm{1}}

\newtheorem{thm}{Theorem}[section]
\newtheorem{lemma}[thm]{Lemma}

\newtheorem{cor}[thm]{Corollary}

\newtheorem{assertion}[thm]{Proposition}
\newtheorem{corollary}[thm]{Corollary}

\theoremstyle{definition}

\theoremstyle{remark}
\newtheorem{rem}[thm]{Remark}

\begin{document}
\title{On nested infinite occupancy scheme in random environment}

\author{Alexander Gnedin\footnote{School of Mathematical Sciences, Queen Mary University of London,
Mile End Road, London E1 4NS, UK;\ e-mail: a.gnedin@qmul.ac.uk} \
and \ Alexander Iksanov\footnote{Faculty of Computer Science and
Cybernetics, Taras Shevchenko National University of Kyiv, 01601
Kyiv, Ukraine; \ e-mail: iksan@univ.kiev.ua}}

\maketitle
\begin{abstract}
\noindent We consider an infinite balls-in-boxes occupancy scheme
with boxes organised in nested hierarchy, and random probabilities
of boxes defined in terms of iterated fragmentation of a unit
mass. We obtain a multivariate functional limit theorem  for the
cumulative occupancy counts as the number of balls approaches
infinity. In the case of fragmentation  driven by a homogeneous
residual allocation model our result generalises the functional
central limit theorem for  the block counts in Ewens' and more
general regenerative partitions.
\end{abstract}

\noindent Key words: Bernoulli sieve; Ewens' partition; functional
limit theorem; infinite occupancy; nested hierarchy

\noindent 2000 Mathematics Subject Classification: Primary: 60F17, 60J80 \\
\hphantom{2000 Mathematics Subject Classification: }Secondary:
60C05
\section{Introduction}\label{Sect1}

In the  infinite multinomial occupancy scheme balls are thrown
independently in a series of boxes, so that each  ball  hits  box
$k=1,2,\dots$ with probability $p_k$, where $p_k>0$ and
$\sum_{k\in\mn}p_k=1$. This classical model is
sometimes named after Karlin due to his seminal contribution
\cite{Karlin:1967}. Features of the occupancy pattern emerging
after the first $n$ balls are thrown have been intensely studied,
see \cite{Barbour+Gnedin:2009, Gnedin+Hansen+Pitman:2007,
Iksanov:2016} for survey and references and
\cite{Hamou+Boucheron+Ohannesian:2017, Chebunin:2017+,
Chebunin+Kovalevskii:2016, Durieu+Samorodnitsky+Wang:2017+} for
recent advances. Statistics in focus of most of the previous work,
and also relevant to the subject of this paper, are not sensitive
to the labelling of boxes but rather only depend on the integer
partition of $n$ comprised of nonzero occupancy numbers.

In the infinite occupancy scheme in a random environment the
(hitting) probabilities of boxes are positive random variables $(P_k)_{k\in\mn}$ with
an arbitrary joint distribution satisfying $\sum_{k\in\mn}P_k=1$ almost surely.
Conditionally on $(P_k)_{k\in\mn}$, balls are thrown
independently, with probability $P_k$ of hitting box $k$.
Instances of this general setup have received considerable
attention within the circle of questions around exchangeable
partitions, discrete random measures and their applications to
population genetics, Bayesian statistics and computer science. In
the most studied and analytically best tractable case  the
probabilities of boxes  are representable as  the residual
allocation (or stick-breaking) model
\begin{equation}\label{BS}
P_k=U_1U_2\cdots U_{k-1}(1-U_k),\quad k\in\mn,
\end{equation}
where the $U_i$'s are independent with beta$(\theta,1)$
distribution{\footnote{Recall that a random
variable $X$ has a beta distribution with parameters $\alpha>0$
and $\beta>0$ if $\mmp\{X\in{\rm d}x\}=(1/{\rm B}(\alpha,
\beta))x^{\alpha-1}(1-x)^{\beta-1}\1_{(0,1)}(x){\rm d}x$. Here,
${\rm B}(\cdot, \cdot)$ is the beta function.} on $(0,1)$ and
$\theta>0$. In this case the distribution of the
sequence $(P_k)_{k\in\mn}$ is known as the
Griffiths-Engen-McCloskey (${\rm GEM}$) distribution
with parameter $\theta$. The sequence of the $P_k$'s arranged in decreasing
order has the Poisson-Dirichlet (${\rm PD}$) distribution
with parameter $\theta$, and the induced
exchangeable partition on the set of $n$ balls follows the
celebrated Ewens sampling formula
\cite{Arratia+Barbour+Tavare:2003, CSP, Pitman+Yakubovich:2017,
Pitman+Yakubovich:2017b}. Generalisations have been proposed in
various directions. The two-parameter extension due to Pitman and
Yor \cite{CSP} involves probabilities
of form \eqref{BS} with independent but not
identically distributed $U_i$'s, where the distribution of $U_i$
is beta$(\theta+\alpha i, 1-\alpha)$ (with $0<\alpha<1$ and
$\theta>-\alpha$). Residual allocation models with other choices
of parameters for the $U_i$'s with different beta distributions are found in \cite{Ishwaran+James,
Robert+Simatos:2009}. Much effort has been devoted to the
occupancy scheme, known as the Bernoulli sieve, which is based on
a {\it homogeneous} residual allocation model \eqref{BS}, that is,
with independent and identically distributed
(iid) factors $U_i$ having arbitrary distribution on $(0,1)$, see
\cite{Alsmeyer+Iksanov+Marynych:2017,Duchamps+Pitman+Tang:2017+,Gnedin+Iksanov+Marynych:2010b,
Iksanov:2016,Iksanov+Jedidi+Bouzeffour:2017,Pitman+Tang:2017+}.
The homogeneous model has a  multiplicative regenerative property,
also  inherited by the partition of the set of balls.

In more sophisticated constructions of random environments
probabilities $(P_k)_{k\in\mn}$ are identified with some
arrangement in sequence  of masses of a purely atomic  random
probability measure. A widely explored possibility is to define a
random cumulative distribution function $F$ by transforming  the
path of an increasing drift-free L{\'e}vy process (subordinator)
$(X(t))_{t\geq 0}$. In particular, in the Poisson-Kingman model
$F(t)= X(t)/X(1)$  for a measure supported by $[0,1]$, see
\cite{Ghosal, CSP}. In the regenerative model $F(t)=1-e^{-X(t)},
~t\geq 0,$   called in the statistical literature neutral-to-the
right prior \cite{Ghosal}, see
\cite{Barbour+Gnedin:2006,Gnedin+Iksanov:2012,Gnedin+Pitman+Yor:2006,Gnedin+Pitman+Yor:2006a}.

Following \cite{Bertoin:2008, Businger:2017, Joseph:2011} we shall
study a nested infinite occupancy scheme in random environment. In
this context we regard  $(P_k)_{k\in\mn}$  as a random {\it
fragmentation law} (with $P_k>0$ and $\sum_{k\in\mn}P_k=1$ a.s.).
To introduce hierarchy of boxes, for each $j\in\mn_0$ let
$\mathbb{V}_j$ be the set of words of length $j$ over $\mn$, where
$\mathbb{V}_0:=\{\varnothing\}$. The set
$\mathbb{V}=\bigcup_{j\in\mn_0} \mathbb{V}_j$ of all finite words
has the natural structure of a $\infty$-storey tree with root
$\varnothing$ and $\infty$-ary branching at every node, where $v1,
v2,\dots\in \mathbb{V}_{j+1}$ are the immediate followers of
$v\in\mathbb{V}_j$. Let $\{(P_k^{(v)})_{k\in\mn}$,
$v\in\mathbb{V}\}$ be a family of independent copies of
$(P_k)_{k\in\mn}$. With each $v\in\mathbb{V}$ we associate a box
divided in sub-boxes $v1, v2,\dots$ of the next level. The
probabilities of boxes are defined recursively by
\begin{equation}\label{fragm}
P(\varnothing)=1, ~~~P(vk)=P(v)P_k^{(v)}~~~{\rm for~}v\in
\mathbb{V}, k\in \mn
\end{equation}
(note that the factors $P(v)$ and $P_k^{(v)}$ are independent).
Given $(P(v))_{v\in\mathbb{V}}$, balls are thrown independently,
with probability $P(v)$ of hitting box $v$. Since
$\sum_{v\in\mathbb{V}_j}P(v)=1$ the allocation of balls in boxes
of level $j$ occurs according to the ordinary Karlin's occupancy
scheme.

Recursion \eqref{fragm} defines a discrete-time mass-fragmentation
process, where  the generic mass splits in proportions according
to the same  fragmentation law, independently of the history and
masses of the co-existing fragments. The nested occupancy scheme
can be seen as a combinatorial version of this fragmentation
process. Initially all balls are placed in box $\varnothing$, and
at each consecutive step $j+1$ each ball in box $v\in\mathbb{V}_j$
is placed in sub-box $vk$ with probability $P_k^{(v)}$. The
inclusion relation on the hierarchy of boxes induces a
combinatorial structure on the (labelled) set of balls called
total partition, that is a sequence of refinements from the
trivial one-block partition down to the partition in singletons.
The paper \cite{Forman+Haulk+Pitman} highlights the role of
exchangeability and gives  the general de Finetti-style connection
between mass-fragmentations and total partitions.

We consider the random probabilities of the hierarchy of boxes
and the outcome of throwing infinitely many balls all defined on
the same underlying probability space. For $j,r\in\mn$, denote by
$K_{n,j,r}$ the number of boxes $w\in{\mathbb W}_j$ of the $j$th
level that contain exactly $r$ out of $n$ first balls, and let
\begin{equation}\label{kn} K_{n,j}(s):=\sum_{r=\lceil n^{1-s}
\rceil}^n K_{n,j,r},\quad s\in [0,1],
\end{equation}
be a cumulative count of occupied boxes, where
$\lceil\,\cdot\,\rceil$ is the integer ceiling function. With
probability one the random function $s\mapsto K_{n,j}(s)$ is
nondecreasing and right-continuous, hence belongs to the Skorokhod
space $D[0,1]$. Also observe that $K_{n,j}(0)=K_{n,j,n}$ is zero
unless all balls fall in the same  box and that $K_{n,j}(1)$ is
the number of occupied boxes in the $j$th level. In
\cite{Bertoin:2008}  a central limit theorem with random centering
was proved for $K_{n,j}(1)$ for $j$ growing with $n$ at  certain
rate. Our focus is different. We are interested in the joint weak
convergence of $((K_{n,j_1}(s),\ldots,
K_{n,j_m}(s)))_{s\in[0,1]}$, properly normalised and centered, for
any finite collection of occupancy levels $1\leq j_1<\ldots< j_m$
as the number of balls $n$ tends to $\infty$. As far as we know,
this question has not been addressed so far. We prove a
multivariate functional limit theorem (Theorem \ref{main0})
applicable to the fragmentation laws representable by homogeneous
residual allocations models (including the ${\rm GEM}/{\rm PD}$
distribution) and some other models where the sequence of $P_k$'s
arranged in decreasing order approaches zero sufficiently fast. A
univariate functional limit  for $(K_{n,1}(s))_{s\in[0,1]}$ in the
case of Bernoulli sieve was previously obtained in
\cite{Alsmeyer+Iksanov+Marynych:2017}.

\section{Main result}\label{main345}

For given fragmentation law $(P_k)_{k\in\mn}$, let
$\rho(s):=\#\{k\in\mn: P_k\geq 1/s\}$ for $s>0$, and
$N(t):=\rho(e^t), V(t):=\me N(t)$ for $t\in\mr$. The joint
distribution of $K_{n,j,r}$'s is completely determined by the
probability law of the random function $\rho(\cdot)$, which
captures the fragmentation law up to re-arrangement of $P_k$'s.
For our purposes therefore we can make no difference between
fragmentation laws with the same  $\rho(\cdot)$.

Similarly, using probabilities of boxes in level $j\in\mn$ define
$\rho_j(s):=\#\{v\in{\mathbb V}_j: P(v)\geq 1/s\}$ for $s>0$, and
$N_j(t):=\rho_j(e^t), V_j(t):=\me N_j(t)$ for $t\in\mr$. Note that
$N_j(t)=0$ for $t\leq0$. Since $\sum_{v\in{\mathbb V}_j}P(v)=1$
a.s.\ we have $\rho_j(s)\leq s$, whence $N_j(t)\leq e^t$ a.s. and
$V_j(t)<e^t$.

Let $T_k:=-\log P_k$ for $k\in\mn$. Here is a basic decomposition
of principal importance for what follows:
\begin{equation}\label{basic decomp}
N_j(t)=\sum_{k\in\mn} N_{j-1}^{(k)}(t-T_k),\quad t\in\mr,
\end{equation}
where $(N_{j-1}^{(k)}(t))_{t\geq 0}$ for $k\in\mn$ are independent
copies of $(N_{j-1}(t))_{t\geq 0}$ which are also independent of
$T_1$, $T_2,\ldots$ An immediate consequence of \eqref{basic
decomp} is a recursion for the expectations
\begin{equation}\label{recexp}
V_j(t)=\int_{[0,\,t]}V_{j-1}(t-y){\rm d}V(y),\quad t\geq 0,~j\geq
2,
\end{equation}
which shows that $V_j(\cdot)$ is the $j$th convolution power of
$V(\cdot)$.


The assumptions on fragmentation law and the functional limit will
involve a centered Gaussian process $W:=(W(s))_{s\geq 0}$ which is
a.s.\ locally H\"{o}lder continuous with exponent $\beta>0$ and
satisfy $W(0)=0$. In particular, for any $T>0$
\begin{equation}\label{sam22}
|W(x)-W(y)|\leq M_T|x-y|^\beta,\quad 0\leq x,y\leq T
\end{equation}
for some a.s.\ finite random variable $M_T$. For each $u>0$, we
set further
$$R^{(u)}_1(s):=W(s),\quad R^{(u)}_j(s):=\int_{[0,\,s]}(s-y)^{u(j-1)}{\rm
d}W(y),\quad s\geq 0,~ j\geq 2.$$ For $j\geq 2$, the process
$R^{(u)}_j$ is understood as the result of integration by parts
$$R_j^{(u)}(s)=u(j-1)\int_0^s (s-y)^{u(j-1)-1}W(y){\rm d}y,\quad s\geq 0.$$ In particular, when $u(j-1)$ is a positive integer,
$$R^{(u)}_j(s)=(u(j-1))!\int_0^{s_1}\int_0^{s_2}\ldots\int_0^{s_{u(j-1)}}
W(y){\rm d}y{\rm d}s_{u(j-1)}\ldots{\rm d}s_2,\quad s\geq 0,~j\geq
2,$$ where $s_1=s$, which can be seen with the help of repeated
integration by parts.

Throughout the paper $D:=D[0,\infty)$ denotes the standard
Skorokhod space. Here is our main result.
\begin{thm}\label{main0}
Assume the following conditions hold:
\begin{itemize}
\item[\rm(i)]
\begin{equation}\label{1}
b_0+b_1 t^{\omega-\varepsilon_1}\leq V(t)-c t^\omega\leq
a_0+a_1t^{\omega-\varepsilon_2}
\end{equation}
for all $t\geq 0$ and some constants $c,\omega, a_0, a_1>0$, $0<\varepsilon_1,
\varepsilon_2\leq \omega$ and $b_0, b_1\in\mr$,
\item[\rm(ii)]
\begin{equation}\label{vari}
\me \sup_{s\in [0,\,t]}(N(s)-V(s))^2=O(t^{2\gamma}),\quad
t\to\infty
\end{equation}
for some $\gamma\in (\omega- \min (1,\varepsilon_1,
\varepsilon_2), \omega)$.
\item[\rm(iii)]
\begin{equation}\label{flt_assumption}
\frac{N(t\,\cdot)-c(t\,\cdot)^\omega}{at^\gamma}~\Rightarrow~
W(\cdot),\quad t\to\infty
\end{equation}
in the $J_1$-topology on $D$ for some $a>0$.
\end{itemize}
Then
\begin{equation}\label{relation_main5000}
\bigg(\frac{K_{n,j}(\cdot)- c_j(\log n(\cdot))^{\omega
j}}{ac_{j-1}(\log
n)^{\gamma+\omega(j-1)}}\bigg)_{j\in\mn}~\Rightarrow~(R^{(\omega)}_j(\cdot))_{j\in\mn},\quad
n\to\infty
\end{equation}
in the $J_1$-topology on $D[0,1]^\mn$, where $\Gamma(\cdot)$ is
the gamma function and
\begin{equation}\label{defcj}
c_j:=\frac{(c\Gamma(\omega+1))^j}{\Gamma(\omega j+1)},\quad j\geq 0.
\end{equation}
\end{thm}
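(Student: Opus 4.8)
\emph{Strategy.} I would organise the proof in two largely independent parts: a ``de-Poissonisation'' step reducing the claim to a statement about the environmental counting functions $N_j$, and an inductive proof of a functional limit theorem for the $N_j$'s resting on the branching decomposition \eqref{basic decomp}. For the first part, observe that a level-$j$ box $w$ receives a ${\rm Binomial}(n,P(w))$ number of balls, so one expects $K_{n,j}(s)$ --- the number of level-$j$ boxes with at least $\lceil n^{1-s}\rceil$ balls --- to be close to $\rho_j(n^{s})=N_j(s\log n)$, the number of level-$j$ boxes with $P(w)\geq n^{-s}$. The plan is to show, jointly over any finite set of levels, that $\sup_{s\in[0,1]}|K_{n,j}(s)-N_j(s\log n)|=o_{\mmp}\big((\log n)^{\gamma+\omega(j-1)}\big)$. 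Conditionally on $(P(v))_v$ the level-$j$ occupancy numbers are multinomial; replacing $n$ by an independent ${\rm Poisson}(n)$ number of balls makes the box contents independent, after which a first- and second-moment estimate on the number of ``borderline'' boxes (those with $P(w)$ of order $n^{-s}$, where the count straddles $n^{1-s}$) plus a routine de-Poissonisation lemma gives the bound. The subtle point will be uniformity up to $s=1$, where $\lceil n^{1-s}\rceil$ is of constant order and the boxes with $P(w)<1/n$ enter; here I would invoke \eqref{1} and the inequality $\gamma>\omega-\min(1,\varepsilon_1,\varepsilon_2)$ (which forces in particular $\gamma>\max(0,\omega-1)$) to bury the bias $\me K_{n,j}(1)-V_j(\log n)$ and the conditional variance in the error term. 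Granting this, it suffices to prove \eqref{relation_main5000} with $K_{n,j}(\cdot)$ replaced by $N_j(\,\cdot\log n)$.

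\emph{Deterministic preliminaries and base case.} Write $\gamma_j:=\gamma+\omega(j-1)$. First I would record, from the convolution identity ``$\mathrm{d}V_j=(\mathrm{d}V)^{*j}$'' implicit in \eqref{recexp} together with the two-sided bounds \eqref{1}, an expansion $V_j(t)=c_jt^{\omega j}+O(t^{\omega j-\varepsilon_*})$ as $t\to\infty$ with $\varepsilon_*:=\min(\varepsilon_1,\varepsilon_2)$; the constant in \eqref{defcj} is exactly what this Abelian calculation returns, since $\widehat V(\lambda)\sim c\Gamma(\omega+1)\lambda^{-\omega}$ as $\lambda\downarrow0$, so $\widehat V(\lambda)^{j}\sim(c\Gamma(\omega+1))^{j}\lambda^{-\omega j}$ and inversion produces the factor $1/\Gamma(\omega j+1)$ (equivalently $cc_{j-1}\omega\,{\rm B}(\omega(j-1)+1,\omega)=c_j$). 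Since $\varepsilon_*>\omega-\gamma$ by (ii), the centring $c_j(\log n)^{\omega j}$ may henceforth be replaced by $V_j(\log n)$ at a cost that is $o((\log n)^{\gamma_j})$. I would then prove by induction on $j$ that, jointly over $j\in\mn$,
$$\Big(\frac{N_j(\,\cdot\log n)-V_j(\,\cdot\log n)}{a\,c_{j-1}(\log n)^{\gamma_j}}\Big)_{j\in\mn}\ \Rightarrow\ \big(R^{(\omega)}_j(\cdot)\big)_{j\in\mn}\quad\text{in }D[0,1]^{\mn},$$
and, simultaneously, that $\me\sup_{u\leq t}(N_j(u)-V_j(u))^2=O(t^{2\gamma_j})$. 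For $j=1$ both statements are exactly hypotheses \eqref{flt_assumption} and \eqref{vari}, since $N_1\od N$, $c_0=1$ and $R^{(\omega)}_1=W$.

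\emph{Induction step.} Using \eqref{basic decomp} with the point process $\sum_k\delta_{T_k}$ (counting function $N$) and independent copies $N^{(k)}_{j-1}$ of $N_{j-1}$, I would split $N_j(t)-V_j(t)=A_j(t)+B_j(t)$, where $A_j(t):=\sum_k\big(N^{(k)}_{j-1}(t-T_k)-V_{j-1}(t-T_k)\big)$ and $B_j(t):=\int_{[0,t]}V_{j-1}(t-y)\,\mathrm{d}(N-V)(y)$. Substituting $t=s\log n$ and rescaling gives
$$\frac{B_j(s\log n)}{a\,c_{j-1}(\log n)^{\gamma_j}}=\int_{[0,\,s]}\frac{V_{j-1}((s-z)\log n)}{c_{j-1}(\log n)^{\omega(j-1)}}\,\mathrm{d}\Big(\frac{(N-V)(z\log n)}{a(\log n)^\gamma}\Big),$$
in which the integrand converges to $(s-z)^{\omega(j-1)}$ uniformly in $z\in[0,s]$ (from $V_{j-1}(t)\sim c_{j-1}t^{\omega(j-1)}$, monotonicity and a Dini-type argument) and the integrator converges weakly to the a.s.\ continuous process $W$ by \eqref{flt_assumption}. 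Integrating by parts to shift the differential onto the boundedly-varying integrand, then applying the continuous mapping theorem in the extended form (deterministically convergent integrand against weakly convergent integrator), would yield $B_j(\,\cdot\log n)/(a\,c_{j-1}(\log n)^{\gamma_j})\Rightarrow R^{(\omega)}_j$; as this rests on the single weakly convergent process $(N-V)(\,\cdot\log n)/(a(\log n)^\gamma)\Rightarrow W$, the convergence holds jointly over all levels driven by the same $W$. To show $A_j$ negligible: conditionally on $N$ it is a sum of independent, centred terms, so by the induction hypothesis $\me A_j(t)^2=\me[{\rm Var}(A_j(t)\mid N)]\leq C\int_{[0,t]}\big(1+(t-y)^{2\gamma_{j-1}}\big)\,\mathrm{d}V(y)=O(t^{2\gamma_{j-1}+\omega})$, i.e.\ $\|A_j(t)\|_2=O(t^{\gamma_{j-1}+\omega/2})$, which is of strictly smaller polynomial order than $t^{\gamma_j}=t^{\gamma_{j-1}+\omega}$ because $\omega>0$. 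The hard part will be upgrading this pointwise bound to $\sup_{t\leq\log n}|A_j(t)|=o_{\mmp}((\log n)^{\gamma_j})$: I would discretise $[0,\log n]$ on a mesh $\delta=\delta_n$, bound the oscillation of $A_j$ over each cell by $\big(N_j((i{+}1)\delta)-N_j(i\delta)\big)+\big(V_j((i{+}1)\delta)-V_j(i\delta)\big)$ (nonnegative increments of monotone functions, which for a well-chosen $\delta_n$ are of smaller order than $(\log n)^{\gamma_j}$), and control the grid-maximum of the $|A_j(i\delta)|$ by a second-moment/maximal inequality; the competing constraints on $\delta_n$ turn out to be compatible precisely because $\gamma>\max(0,\omega-1)$, and condition \eqref{vari} is exactly what lets the induction close.

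\emph{Conclusion.} The variance estimate $\me\sup_{u\leq t}(N_j(u)-V_j(u))^2=O(t^{2\gamma_j})$ then follows by combining the (smaller) bound for $\sup_{u\leq t}|A_j(u)|$ with $\sup_{u\leq t}|B_j(u)|\leq\sup_{v\leq t}|(N-V)(v)|\cdot V_{j-1}(t)=O_{\mmp}\big(t^\gamma t^{\omega(j-1)}\big)$, the latter giving $\me\sup_{u\leq t}B_j(u)^2=O(t^{2\gamma_j})$ by \eqref{vari}. For the joint statement, convergence in $D[0,1]^{\mn}$ with the product topology is equivalent to convergence of every finite subcollection together with tightness of each coordinate, so it suffices to treat finitely many levels at a time; since at every level the limiting fluctuation is the same continuous functional $R^{(\omega)}_j$ of the single Gaussian process $W$, one application of the continuous mapping theorem on each finite block of coordinates, combined with the first-step reduction, gives \eqref{relation_main5000}. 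I expect the two genuine obstacles to be (a) the de-Poissonisation uniform in $s$ near $s=1$ and (b) the maximal inequality rendering $A_j$ negligible uniformly on $[0,\log n]$; everything else amounts to Abelian bookkeeping and soft continuous-mapping arguments.
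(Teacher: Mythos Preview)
Your overall architecture matches the paper's: reduce $K_{n,j}$ to $N_j$, decompose $N_j-V_j=A_j+B_j$ (the paper's $X_j+Y_j$), prove joint convergence of the $B_j$'s to $(R_j^{(\omega)})$ by integration by parts and continuous mapping, and show the $A_j$'s are uniformly negligible by induction. Your treatment of $B_j$, of the Abelian asymptotics for $V_j$, and of the variance recursion is essentially that of the paper (Proposition~\ref{limit234} and Lemma~\ref{aux}).

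Two points differ, and one of them is a genuine gap. First, the order of the two main blocks is reversed in the paper: it proves Theorem~\ref{main10} (the FLT for $N_j$) \emph{first}, derives from it Corollary~\ref{aux1} on uniform smallness of increments $\sup_s(N_j(st+h)-N_j(st))$, and only then carries out the comparison $K_{n,j}\leadsto N_j$ (Proposition~\ref{Propo3.3}); that comparison is not done by Poissonisation but via a deterministic pathwise bound on $\me\sup_s|\bar K_n(s)-\bar\rho(n^s)|$ (Proposition~\ref{prop1}, taken from \cite{Alsmeyer+Iksanov+Marynych:2017}), applied conditionally on the environment. One of the resulting terms is $\sup_s(\rho_j(en^s)-\rho_j(e^{-1}n^s))$, and this is where Corollary~\ref{aux1} is used. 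Your ``routine de-Poissonisation lemma'' would have to deliver exactly this uniform control, which is the nontrivial content.

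Second, and more seriously, your discretisation argument for $\sup_{t\le T}|A_j(t)|=o_{\mmp}(T^{\gamma_j})$ is circular as written. You bound the cell oscillation of $A_j$ by $\Delta N_j+\Delta V_j$; the deterministic $\Delta V_j$ is fine, but a first- or second-moment union bound on $\max_i\Delta N_j$ does not close (the union-bound cost is $T^{\omega-\gamma}$, which diverges), and controlling $\max_i\Delta N_j$ otherwise is essentially Corollary~\ref{aux1} for level $j$, i.e.\ the very FLT you are proving. A repair is available: since $A_j=N_j-\widetilde V_j$ with $\widetilde V_j(t):=\sum_kV_{j-1}(t-T_k)=V_j(t)+B_j(t)$ monotone, one may bound the cell oscillation by $\Delta\widetilde V_j=\Delta V_j+\Delta B_j$ instead, and $\max_i\Delta B_j=o_{\mmp}(T^{\gamma_j})$ follows from the already-established tightness of $B_j(\cdot\,T)/T^{\gamma_j}$ with a continuous limit. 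The paper does something quite different (Proposition~\ref{zero}): via Skorokhod's representation it couples each $\widehat N_{j-1}^{(r)}$ to an independent copy $\widehat R_{j-1}^{(\omega,r)}$ of the limit so that $\widehat N_{j-1}^{(r)}(t\cdot)/t^{\gamma_{j-1}}\to\widehat R_{j-1}^{(\omega,r)}$ a.s., writes $t^{-\gamma_j}\widehat A_j(ty)=t^{-\omega}(\widehat Z_1(t,y)+\widehat Z_2(t,y))$ with $\widehat Z_1$ the a.s.-small remainder and $\widehat Z_2(t,y)=\sum_r\widehat R_{j-1}^{(\omega,r)}(y-t^{-1}\widehat T_r)$, and then shows $t^{-\omega}\sup_y|\widehat Z_2|\to_{\mmp}0$ using the local H\"older continuity of $R_{j-1}^{(\omega)}$ (inherited from $W$, with H\"older constant of all moments finite) together with Rosenthal's inequality and the Kolmogorov tightness criterion. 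This is where the standing H\"older hypothesis on $W$ actually enters.
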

\begin{rem}
The assumption $0<\varepsilon_1, \varepsilon_2\leq
\omega $ ensures that $\gamma>0$. Furthermore, in view of \eqref{1} and
the choice of $\gamma$ relation \eqref{flt_assumption} is
equivalent to
\begin{equation}\label{flt_assumption2}
\frac{N(t\,\cdot)-V(t\,\cdot)}{at^\gamma}~\Rightarrow~ W(\cdot),\quad
t\to\infty
\end{equation}
in the $J_1$-topology on $D$. Similarly, in view of \eqref{xyz}
given below relation \eqref{relation_main5000} is equivalent to
\begin{equation*}
\bigg(\frac{K_{n,j}(\cdot)- V_j(\log n(\cdot))}{ac_{j-1}(\log
n)^{\gamma+\omega(j-1)}}\bigg)_{j\in\mn}~\Rightarrow~(R^{(\omega)}_j(\cdot))_{j\in\mn},\quad
n\to\infty
\end{equation*}
in the $J_1$-topology on $D[0,1]^\mn$.
\end{rem}

\section{Proof of Theorem \ref{main0}}

\subsection{Auxiliary results}

\begin{lemma}\label{aux}

\begin{itemize}
\item[\rm(a)]
Condition \eqref{1} ensures that, for $j\in\mn$ and
$t\geq 0$,
\begin{equation}\label{xyz}
b_{0,j}+b_{1,j}t^{\omega    j-\varepsilon_1} \leq V_j(t)- c_j t^{\omega
j}\leq a_{0,j}+a_{1, j}t^{\omega j-\varepsilon_2},
\end{equation}
where $c_j$ is given by \eqref{defcj}, $a_{0,j}, a_{1,j}>0$ and
$b_{0,j}, b_{1,j}\in\mr$ are constants with $a_{0,1}:=a_0$,
$a_{1,1}:=a_1$, $b_{0,1}:=b_0$ and $b_{1,1}:=b_1$. In particular,
for $j\in\mn$,
\begin{equation}\label{asy100}
V_j(t)~\sim~c_j t^{\omega j},\quad t\to\infty
\end{equation}
and, for $j\in\mn$ and $u,v\geq 0$,
\begin{eqnarray}\label{est_imp}
V_j(u+v)-V_j(v)&\leq& c_j (\1_{\{\omega j \in (0,1]\}}u^{\omega
j}+\1_{\{\omega j>1\}}\omega j(u+v)^{\omega j-1}u)\notag
\\&+&a_{0,j}+a_{1,j}(u+v)^{\omega j-\varepsilon_2}-b_{0,j}-
b_{1,j}v^{\omega j-\varepsilon_1}.
\end{eqnarray}

\item[\rm(b)] Suppose \eqref{1} and \eqref{vari}. Then
\begin{equation}\label{slln22}
\lim_{t\to\infty}\frac{N(t)}{V(t)}=1\quad\text{{\rm a.s.}}
\end{equation}

\item[\rm(c)] Suppose \eqref{1} and \eqref{vari}. Then, for $j\in\mn$,
\begin{equation}\label{variest}
\me\sup_{s\in
[0,\,t]}(N_j(t)-V_j(t))^2=O(t^{2\gamma+2\omega(j-1)}),\quad
t\to\infty
\end{equation}

\end{itemize}
\end{lemma}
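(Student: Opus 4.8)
The three parts are proved in order, each feeding into the next.

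\medskip
\noindent\textit{Part (a).} The plan is to induct on $j$, the base case $j=1$ being the hypothesis \eqref{1}. Recall from \eqref{recexp} that $V_j(\cdot)$ is the $j$th convolution power of $V(\cdot)$, so $V_j=V_{j-1}*V$. Writing $V(t)=ct^\omega+\Delta(t)$ with $\Delta$ controlled above and below by the constants in \eqref{1}, and likewise $V_{j-1}(t)=c_{j-1}t^{\omega(j-1)}+\Delta_{j-1}(t)$ by the inductive hypothesis, I would expand the convolution $\int_{[0,t]}V_{j-1}(t-y)\,\mathrm{d}V(y)$ into a leading term plus remainders. The leading term is the convolution of the two power functions, and the Beta-integral identity $\int_0^t (t-y)^{a}\,\mathrm{d}(y^{b})=b\,\mathrm{B}(a+1,b)\,t^{a+b}$ together with $\Gamma(\omega+1)=\omega\Gamma(\omega)$ gives exactly $c_j t^{\omega j}$ with $c_j$ as in \eqref{defcj}; this is the only real computation and it is routine. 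The remainder terms are each a convolution of a power against a function bounded by a constant plus a lower-order power, and one checks by the same Beta-type estimates (using crude bounds $\mathrm{d}V(y)$ integrates to $V(t)=O(t^\omega)$) that every remainder is $O(t^{\omega j-\min(\varepsilon_1,\varepsilon_2)})$ plus constants, which is absorbed into the claimed two-sided bound \eqref{xyz} after renaming constants. The asymptotic \eqref{asy100} is then immediate. For \eqref{est_imp}, I would simply write $V_j(u+v)-V_j(v)=c_j((u+v)^{\omega j}-v^{\omega j})+(\Delta_j(u+v)-\Delta_j(v))$, bound the power-difference by the mean value theorem when $\omega j>1$ (giving the $\omega j(u+v)^{\omega j-1}u$ term) and by concavity/subadditivity $(u+v)^{\omega j}-v^{\omega j}\le u^{\omega j}$ when $0<\omega j\le1$, and bound the $\Delta_j$ difference by $a_{0,j}+a_{1,j}(u+v)^{\omega j-\varepsilon_2}-b_{0,j}-b_{1,j}v^{\omega j-\varepsilon_1}$ using \eqref{xyz}. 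The one point to watch: in \eqref{xyz} one must choose $b_{1,j}$ with the correct sign so that the lower bound is genuine; this is bookkeeping, not difficulty.

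\medskip
\noindent\textit{Part (b).} Here the plan is a standard first-moment/subsequence argument. From \eqref{vari}, $\me(N(t)-V(t))^2=O(t^{2\gamma})$ with $\gamma<\omega$, while $V(t)\sim ct^\omega$. Along the geometric subsequence $t_n=\lambda^n$ (any $\lambda>1$) Chebyshev's inequality plus Borel--Cantelli gives $(N(t_n)-V(t_n))/V(t_n)\to0$ a.s., since $\sum_n t_n^{2\gamma}/t_n^{2\omega}=\sum_n\lambda^{-2n(\omega-\gamma)}<\infty$. To pass from the subsequence to the full limit I would use monotonicity of $N$ and $V$: for $t\in[t_n,t_{n+1}]$, $N(t_n)\le N(t)\le N(t_{n+1})$ and $V(t_n)\le V(t)\le V(t_{n+1})$, and since $V(t_{n+1})/V(t_n)\to\lambda^\omega$ the sandwich yields $\limsup_t N(t)/V(t)\le\lambda^\omega$ and $\liminf_t N(t)/V(t)\ge\lambda^{-\omega}$ a.s.; letting $\lambda\downarrow1$ through a countable sequence finishes \eqref{slln22}.

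\medskip
\noindent\textit{Part (c), the main obstacle.} This is the crux of the lemma and the only place requiring real work. The plan is induction on $j$ using the basic decomposition \eqref{basic decomp}, $N_j(t)=\sum_{k\ge1}N_{j-1}^{(k)}(t-T_k)$, whence, subtracting the corresponding identity \eqref{recexp} for $V_j$,
\[
N_j(t)-V_j(t)=\sum_{k\ge1}\Big(N_{j-1}^{(k)}(t-T_k)-V_{j-1}(t-T_k)\Big)+\Big(\sum_{k\ge1}V_{j-1}(t-T_k)-V_j(t)\Big).
\]
The second bracket is $\sum_k V_{j-1}(t-T_k)-\me\big[\sum_k V_{j-1}(t-T_k)\,\big|\,(T_k)\big]$ averaged the wrong way --- more precisely it equals $\int_{[0,t]}V_{j-1}(t-y)\,\mathrm{d}(N(y)-V(y))$, an integral of a deterministic kernel against the centered level-one process, which after integration by parts becomes $\int_{[0,t]}(N(y)-V(y))\,\mathrm{d}(-V_{j-1}(t-y))$; its supremum over $[0,t]$ is bounded in $L^2$ by $\big(\me\sup_{[0,t]}(N-V)^2\big)^{1/2}$ times the total variation of $V_{j-1}(t-\cdot)$, which by \eqref{asy100} is $O(t^{\omega(j-1)})$, giving the desired order $t^{\gamma+\omega(j-1)}$. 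The first bracket is a randomly-stopped sum of iid centered processes; conditionally on $(T_k)$ its second moment is $\sum_k \me\sup_{[0,s]}(N_{j-1}-V_{j-1})^2\big|_{s=t-T_k}\le \rho(e^t)\cdot O(t^{2\gamma+2\omega(j-2)})$ by the inductive hypothesis (only finitely many $T_k\le t$, namely $\rho(e^t)\le e^t$ of them --- but that crude bound is too lossy). The genuinely delicate step is to control this sum sharply: one should use that $\sum_k \mathbf 1_{\{T_k\le t\}}=N(t)$, which by part (b) is $\sim V(t)\sim ct^\omega$ a.s., so the conditional second moment is $O(t^\omega\cdot t^{2\gamma+2\omega(j-2)})$; this is $o(t^{2\gamma+2\omega(j-1)})$ precisely because $\omega<2\gamma+2\omega(j-1)-(2\gamma+2\omega(j-2))=2\omega$ fails to be the right comparison --- rather one needs $\omega\le 2\omega(j-1)$, i.e. $j\ge2$, together with handling the supremum over $t$ (not just fixed $t$) via a maximal inequality for the sum of independent processes. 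I expect the real effort to be (1) justifying the interchange of supremum and the infinite sum, and (2) the maximal/moment inequality for the randomly indexed sum of independent centered processes, for which I would condition on $(T_k)$, apply a Doob- or Etemadi-type maximal inequality summand-wise, and then take expectations using $\me N(t)=V(t)=O(t^\omega)$ and the inductive bound; a blocking argument over dyadic $t$-intervals then upgrades fixed-$t$ control to the supremum. Routine constants and the base case $j=1$ (which is just \eqref{vari}) are set aside.
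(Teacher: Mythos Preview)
Your parts (a) and (b) are correct and essentially match the paper's proof. In (b) the paper uses a polynomial subsequence $t_\ell=\ell^\delta$ with $\delta(\omega-\gamma)>1/2$ rather than a geometric one, which lets the ratio $V(t_{\ell+1})/V(t_\ell)\to1$ directly and avoids the final $\lambda\downarrow1$ step, but your version works equally well.

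In part (c) you have the right decomposition $N_j-V_j=X_j+Y_j$ and the right treatment of the $Y_j$-term via integration by parts. But for the $X_j$-term you are making the problem harder than it is. You worry about passing from a fixed-$t$ second-moment bound to a supremum bound and propose Doob/Etemadi-type maximal inequalities and dyadic blocking; none of that is needed. The paper's device is a single application of Cauchy--Schwarz to the finite sum: since at time $s$ there are $N_1(s)$ nonzero summands,
\[
\Big(\sum_{k\ge1}\big(N^{(k)}_{j-1}(s-T_k)-V_{j-1}(s-T_k)\big)\Big)^2
\;\le\; N_1(s)\sum_{k\ge1}\big(N^{(k)}_{j-1}(s-T_k)-V_{j-1}(s-T_k)\big)^2\1_{\{T_k\le s\}}.
\]
Now the supremum over $s\in[0,t]$ passes through \emph{pointwise}, because $N_1$ is nondecreasing and each square is bounded by its own supremum:
\[
\sup_{s\in[0,t]}X_j(s)^2 \;\le\; N_1(t)\sum_{k\ge1}\sup_{s\in[0,t]}\big(N^{(k)}_{j-1}(s)-V_{j-1}(s)\big)^2\1_{\{T_k\le t\}}.
\]
Taking expectations and using that $(T_k)$ is independent of the $N^{(k)}_{j-1}$'s gives
\[
\me\sup_{s\in[0,t]}X_j(s)^2 \;\le\; \me N_1(t)^2 \cdot \me\sup_{s\in[0,t]}\big(N_{j-1}(s)-V_{j-1}(s)\big)^2
= O(t^{2\omega})\cdot O(t^{2\gamma+2\omega(j-2)})=O(t^{2\gamma+2\omega(j-1)}),
\]
using $\me N_1(t)^2=\mathrm{Var}\,N(t)+V(t)^2\sim V(t)^2$ and the inductive hypothesis. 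Your exponent bookkeeping (``$\omega<2\omega$ fails to be the right comparison'') was a self-inflicted confusion: the conditional-variance route you sketched already gives the right order at fixed $t$; what you were missing was simply this Cauchy--Schwarz step, which handles the supremum for free and costs only an extra factor $N_1(t)$, harmless since $\me N_1(t)^2$ and $(\me N_1(t))^2$ are of the same order.
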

\begin{proof}
(a) We only prove the second inequality in \eqref{xyz}. To
this end, we first check that for any $b>0$
$$\int_{[0,\,t]}(t-y)^b{\rm d}V(y)\leq a_0 t^b+ba_1{\rm B}(b,1+\omega-\varepsilon) t^{\omega-\varepsilon+b}+bc{\rm B}(b,
1+\omega)t^{\omega+b},$$ where ${\rm B}(\cdot,\cdot)$ is the beta function, and
we write $\varepsilon$ for
$\varepsilon_2$ to ease notation. Indeed, using \eqref{1} we
obtain
\begin{eqnarray*}
\int_{[0,\,t]}(t-y)^b{\rm d}V(y)&=&b\int_0^t
(V(t-y)-c(t-y)^\omega)y^{b-1}{\rm d}y+bc\int_0^t (t-y)^\omega
y^{b-1}{\rm d}y\\&\leq& ba_0\int_0^t y^{b-1}{\rm d}y+ba_1\int_0^t
(t-y)^{\omega-\varepsilon}y^{b-1}{\rm d}y+bc\int_0^t (t-y)^\omega
y^{b-1}{\rm d}y\\&=&a_0 t^b+ba_1{\rm B}(b,1+\omega-\varepsilon)
t^{\omega-\varepsilon+b}+bc{\rm B}(b, 1+\omega)t^{\omega+b}.
\end{eqnarray*}

To prove the second inequality in \eqref{xyz}
we use  induction. The case $j=1$ is covered by \eqref{1}.
Assume the inequality holds for
$j=k-1$. Then, for $t\geq 0$ recalling (\ref{recexp}) we obtain
\begin{eqnarray*}
V_k(t)&=&\int_{[0,\,t]}(V_{k-1}(t-y)-c_{k-1}(t-y)^{\omega(k-1)}){\rm
d}V(y)+c_{k-1}\int_{[0,\,t]} (t-y)^{\omega(k-1)}{\rm d}V(y)\\&\leq&
a_{0,k-1}
V(t)+a_{1,k-1}\int_{[0,\,t]}(t-y)^{\omega(k-1)-\varepsilon}{\rm
d}V(y)+c_{k-1}\int_{[0,\,t]}(t-y)^{\omega(k-1)}{\rm d}V(y)\\&\leq&
a_{0, k-1}V(t)+a_{1, k-1}\big(a_0
t^{\omega(k-1)-\varepsilon}+(\omega(k-1)-\varepsilon) a_1{\rm
B}(\omega(k-1)-\varepsilon, 1+\omega-\varepsilon) t^{\omega
k-2\varepsilon}\\&+&(\omega(k-1)-\varepsilon) c {\rm
B}(\omega(k-1)-\varepsilon, 1+\omega)t^{\omega
k-\varepsilon}\big)\\&+&c_{k-1}\big(a_0
t^{\omega(k-1)}+\omega(k-1)a_1{\rm B}(\omega(k-1),1+\omega-\varepsilon)
t^{\omega k-\varepsilon}\\&+&\omega (k-1)c{\rm B}(\omega(k-1),
1+\omega)t^{\omega k}\big)\leq c_k t^{\omega k}+a_{0,k}+a_{1,k}t^{\omega
k-\varepsilon}
\end{eqnarray*}
for appropriate positive $a_{0,k}$ and $a_{1,k}$, where we used
\begin{equation}\label{equal}
c_k=c_{k-1}\omega(k-1)c{\rm B}(\omega(k-1), 1+\omega).
\end{equation}

Further, \eqref{asy100} is an immediate consequence of
\eqref{xyz}. To prove \eqref{est_imp}, we use \eqref{xyz} to
obtain, for $j\in\mn$ and $u,v\geq 0$,
\begin{equation*}
V_j(u+v)-V_j(v)\leq c_j ((u+v)^{\omega j}-v^{\omega j})+
a_{0,j}+a_{1,j}(u+v)^{\omega j-\varepsilon_2}-b_{0,j}-
b_{1,j}v^{\omega j-\varepsilon_1}.
\end{equation*}
If $\omega j\in (0,1]$, we have $(u+v)^{\omega j}-v^{\omega j}\leq
u^{\omega j}$ by subadditivity. If $\omega j>1$, we have
$(u+v)^{\omega j}-v^{\omega j}\leq \omega j(u+v)^{\omega j-1} u$
by the mean value theorem  and monotonicity. This completes the
proof of \eqref{est_imp}.

\vskip0.2cm \noindent (b) Condition \eqref{vari} ensures that ${\rm Var}\,N(t)=O(t^{2\gamma})$ as $t\to\infty$.  
Pick any $\delta>0$ such that $\delta(\omega-\gamma)>1/2$. An
application of Markov's inequality yields, for any $\varepsilon>0$
and positive integer $\ell$,
$$\mmp\{|N(\ell^\delta)-V(\ell^\delta)|>\varepsilon V(\ell^\delta)\}\leq \frac{{\rm Var}\,N(\ell^\delta)}{\varepsilon^2
V(\ell^\delta)^2}=O(\ell^{-2\delta (\omega-\gamma)}),\quad
\ell\to\infty.$$ This entails
$\lim_{\ell\to\infty}(N(\ell^\delta)/V(\ell^\delta))=1$ a.s.\ by
the Borel-Cantelli lemma. For any $t>1$ there exists an integer
$\ell\geq 2$ such that $(\ell-1)^\delta<t\leq \ell^\delta$,
whence, by monotonicity,
$$\frac{N((\ell-1)^\delta)}{V((\ell-1)^\delta)}\frac{V((\ell-1)^\delta)}{V(\ell^\delta)}\leq \frac{N(t)}{V(t)}\leq
\frac{N(\ell^\delta)}{V(\ell^\delta)}\frac{V(\ell^\delta)}{V((\ell-1)^\delta)}.$$
Since $\lim_{\ell\to\infty}(V(\ell^\delta)/V((\ell-1)^\delta))=1$
we infer \eqref{slln22}.

\vskip0.2cm \noindent (c) We use the induction on $j$. When $j=1$,
relation \eqref{variest} holds according to \eqref{vari}. Assuming
that \eqref{variest} holds for $j=i-1$ we intend to show that it
also holds for $j=i$.

Recalling \eqref{basic decomp}, write, for $i\geq 2$ and $t\geq
0$,
\begin{eqnarray}\label{decomp}
N_i(t)-V_i(t)&=&\sum_{k\in\mn}
\big(N^{(k)}_{i-1}(t-T_k)-V_{i-1}(t-T_k)\big)\\&+&
\bigg(\sum_{k\in\mn}
V_{i-1}(t-T_k)-V_i(t)\bigg)=:X_i(t)+Y_i(t).\notag
\end{eqnarray}
An integration by parts yields, for $s\geq 0$,
\begin{eqnarray*}
|Y_i(s)|&=&\Big|\int_{[0,\,s]}V_{i-1}(s-x){\rm
d}(N_1(x)-V_1(x))\Big|\leq \int_{[0,\,s]}|N_1(s-x)-V_1(s-x)|{\rm
d}V_{i-1}(x)\\&\leq& \sup_{y\in [0,\,s]}|N_1(y)-V_1(y)|
V_{i-1}(s).
\end{eqnarray*}
Hence,
$$\me [\sup_{s\in [0,\,t]}Y_i(s)]^2\leq \me [\sup_{y\in
[0,\,t]}(N(y)-V(y))]^2
V_{i-1}(t)^2=O(t^{2\gamma+2\omega(i-1)}),\quad t\to\infty$$ by
\eqref{vari} and \eqref{asy100}.

Passing to the analysis of $X_i$ we obtain, for $s\geq 0$
\begin{eqnarray*}
[\sup_{s\in[0,\,t]} X_i(s)]^2&\leq& \sup_{s\in [0,\,t]}\Big(
N_1(s)\sum_{k\in\mn}
\big(N^{(k)}_{i-1}(s-T_k)-V_{i-1}(s-T_k)\big)^2\1_{\{T_k\leq
s\}}\Big)\\&\leq& N_1(t)\sum_{k\in\mn}\sup_{s\in[0,\,t]}
\big(N^{(k)}_{i-1}(s)-V_{i-1}(s)\big)^2\1_{\{T_k\leq t\}}.
\end{eqnarray*}
Therefore, $\me [\sup_{s\in[0,\,t]} X_i(s)]^2\leq \me N(t)^2
\me[\sup_{s\in
[0,\,t]}(N_{i-1}(s)-V_{i-1}(s))]^2=O(t^{2\gamma+2\omega(i-1)})$ as
$t\to\infty$ by the induction assumption and the asymptotics $\me
[N(t)]^2={\rm Var}\,N(t)+V(t)^2\sim V(t)^2$ as $t\to\infty$. It
remains to note that
$$\me[\sup_{s\in[0,\,t]}(N_i(s)-V_i(s))]^2\leq 2\big(\me
[\sup_{s\in [0,\,t]}X_i(s)]^2+\me [\sup_{s\in
[0,\,t]}Y_i(s)]^2\big)=O(t^{2\gamma+2\omega(i-1)}), \quad
t\to\infty.$$
\end{proof}

Our main result, Theorem \ref{main0}, is an immediate consequence
of Proposition \ref{Propo3.3} given in Section \ref{box}, Theorem
\ref{main10} given next and its corollary.
\begin{thm}\label{main10}
Suppose \eqref{1}, \eqref{vari} and \eqref{flt_assumption}. Then
\begin{equation}\label{relation_main}
\bigg(\frac{N_j(t\cdot)-V_j(t\cdot)}{ac_{j-1}
t^{\gamma+\omega(j-1)}}\bigg)_{j\in\mn}~\Rightarrow~(R^{(\omega)}_j(\cdot))_{j\in\mn}
\end{equation}
in the $J_1$-topology on $D^\mn$.
\end{thm}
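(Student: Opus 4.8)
The plan is to argue by induction on the number $m$ of coordinates, proving the joint convergence of the first $m$ rescaled processes $\big((N_j(t\cdot)-V_j(t\cdot))/(ac_{j-1}t^{\gamma+\omega(j-1)})\big)_{1\le j\le m}$ to $(R^{(\omega)}_j(\cdot))_{1\le j\le m}$ in the $J_1$-topology on $D^m$; the full statement on $D^{\mn}$ then follows since convergence in the product topology is determined by finite-dimensional projections. The base case $m=1$ is exactly hypothesis \eqref{flt_assumption} (equivalently \eqref{flt_assumption2}). For the inductive step the engine is the decomposition \eqref{basic decomp}, which after rescaling reads
\begin{equation*}
\frac{N_j(t\cdot)-V_j(t\cdot)}{ac_{j-1}t^{\gamma+\omega(j-1)}}
=\frac{X_j(t\cdot)}{ac_{j-1}t^{\gamma+\omega(j-1)}}
+\frac{Y_j(t\cdot)}{ac_{j-1}t^{\gamma+\omega(j-1)}},
\end{equation*}
with $X_j,Y_j$ as in \eqref{decomp}. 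Lemma \ref{aux}(c) together with the bound on $\sup_{s}|Y_i(s)|$ derived in its proof shows $\me[\sup_{s\in[0,t]}Y_j(s)]^2=O(t^{2\gamma+2\omega(j-1)})$; but in fact one expects $Y_j$, being built from $N_1-V_1$ smoothed against $V_{j-1}$ (which grows like $c_{j-1}s^{\omega(j-1)}$), to be the \emph{dominant} term and to converge, after rescaling, to $R^{(\omega)}_j$. Concretely, writing $Y_j(s)=\int_{[0,s]}V_{j-1}(s-x)\,\mathrm d(N_1(x)-V_1(x))$ and integrating by parts, one replaces $V_{j-1}(s-x)$ by its leading term $c_{j-1}(s-x)^{\omega(j-1)}$ (the error is controlled by \eqref{xyz} and the variance bound \eqref{vari}), so that $Y_j(t\cdot)/(ac_{j-1}t^{\gamma+\omega(j-1)})$ is asymptotically $\int_{[0,\cdot]}(\cdot-y)^{\omega(j-1)}\,\mathrm d\big((N_1(ty)-V_1(ty))/(at^\gamma)\big)$, which by \eqref{flt_assumption2} and continuity of the integration-by-parts map $w\mapsto\int_0^{\cdot}(\cdot-y)^{\omega(j-1)}\,\mathrm dw(y)$ on $D$ converges to $R^{(\omega)}_j$. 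The corresponding $X_j$ term should be shown to be asymptotically negligible at the scale $t^{\gamma+\omega(j-1)}$: from the proof of Lemma \ref{aux}(c), $X_j$ is a sum over $k$ of centered copies $N^{(k)}_{j-1}(t-T_k)-V_{j-1}(t-T_k)$, and since these are centered conditionally on $(T_k)$ one gets $\me[\sup_{s\le t}X_j(s)]^2=O(t^{2\gamma+2\omega(j-2)}\cdot V(t)^{?})$ — here a more careful second-moment computation is needed to confirm the exponent is strictly below $2\gamma+2\omega(j-1)$, using $\gamma<\omega$.

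The induction must be run \emph{jointly}: one needs the convergence of $Y_j$ together with that of the lower coordinates $N_1-V_1,\dots,N_{j-1}-V_{j-1}$ on the same probability space. This is automatic here because $Y_j$ is a \emph{deterministic continuous functional of $N_1-V_1$ alone} (via the integration-by-parts/convolution map with kernel $(s-x)^{\omega(j-1)}$), so all the limit processes $R^{(\omega)}_1,\dots,R^{(\omega)}_m$ are the \emph{same} continuous functionals of the single Gaussian process $W$; joint convergence then follows from the single weak convergence \eqref{flt_assumption2} by the continuous mapping theorem applied to the map $w\mapsto (w,\int_0^{\cdot}(\cdot-y)^{\omega}\mathrm dw,\dots,\int_0^{\cdot}(\cdot-y)^{\omega(m-1)}\mathrm dw)$ from $D$ to $D^m$, once one has absorbed the $X_j$'s as $o_{\mmp}(1)$ perturbations at each scale. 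Continuity of this map in the $J_1$-topology (indeed it lands in $C[0,\infty)^{m-1}\times D$ for $j\ge2$) is where the local Hölder hypothesis \eqref{sam22} on $W$ and standard properties of the Skorokhod topology under integration are invoked.

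I expect the main obstacle to be the rigorous asymptotic analysis of $X_j$ and of the error in replacing $V_{j-1}$ by its polynomial leading term inside $Y_j$ — that is, showing both contribute only $o_{\mmp}(t^{\gamma+\omega(j-1)})$ uniformly on compacts. For $X_j$ this requires a sharper conditional second-moment estimate than the crude one in Lemma \ref{aux}(c): one must exploit that the summands are centered given $(T_k)$, so cross terms vanish and $\me[\sup_{s\le t}X_j(s)^2]$ is bounded by $\sum_k\me[\1_{\{T_k\le t\}}\sup_{s\le t}(N^{(k)}_{j-1}(s)-V_{j-1}(s))^2]=V(t)\cdot O(t^{2\gamma+2\omega(j-2)})=O(t^{\omega+2\gamma+2\omega(j-2)})$, and the exponent $\omega+2\gamma+2\omega(j-2)$ is strictly less than $2\gamma+2\omega(j-1)$ precisely because $\omega<2\omega$, i.e.\ $\omega>0$ — wait, more carefully, $2\gamma+2\omega(j-1)-(\omega+2\gamma+2\omega(j-2))=\omega>0$, so the gap is exactly $\omega$ and this term is negligible. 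For the $Y_j$ error term, \eqref{xyz} gives $|V_{j-1}(s)-c_{j-1}s^{\omega(j-1)}|=O(s^{\omega(j-1)-\varepsilon})$, which after convolution against $\mathrm d(N_1-V_1)$ and using \eqref{vari} contributes $O(t^{\gamma+\omega(j-1)-\varepsilon})=o(t^{\gamma+\omega(j-1)})$, as needed. Assembling these estimates with Slutsky's theorem and the continuous mapping theorem completes the step.
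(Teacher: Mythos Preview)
Your overall architecture matches the paper's: decompose $N_j-V_j=X_j+Y_j$ as in \eqref{decomp}, show that the $Y_j$'s (which are deterministic functionals of $N_1-V_1$) jointly converge to the $R^{(\omega)}_j$'s, and show the $X_j$'s are negligible. The paper formalises exactly this as Propositions \ref{limit234} and \ref{zero}. Your treatment of $Y_j$ via the continuous mapping theorem is a reasonable alternative to the paper's explicit finite-dimensional-distributions-plus-tightness proof, and the replacement of $V_{j-1}$ by $c_{j-1}(\cdot)^{\omega(j-1)}$ is indeed controllable using \eqref{xyz}.

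The genuine gap is in your handling of $X_j$. You write that because the summands are centered given $(T_k)$, ``cross terms vanish and $\me[\sup_{s\le t}X_j(s)^2]$ is bounded by $\sum_k\me[\1_{\{T_k\le t\}}\sup_{s\le t}(N^{(k)}_{j-1}(s)-V_{j-1}(s))^2]$''. This is not correct: orthogonality of the centered summands gives $\me[X_j(s)^2]=\sum_k\me[\1_{\{T_k\le s\}}(N^{(k)}_{j-1}(s-T_k)-V_{j-1}(s-T_k))^2]$ for each \emph{fixed} $s$, but there is no such identity or inequality for $\me[\sup_s X_j(s)^2]$. The supremum mixes different time points, and the cross terms do not vanish. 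The only elementary bound available (Cauchy--Schwarz with the number of summands, as in the proof of Lemma \ref{aux}(c)) gives $\me[\sup_s X_j(s)^2]=O(t^{2\gamma+2\omega(j-1)})$, which is exactly the critical scale and hence useless for negligibility.

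This is precisely the difficulty the paper spends Proposition \ref{zero} on, and its resolution is substantially more delicate than a moment bound: the paper proves negligibility of $X_{k+1}$ \emph{using the functional limit theorem for $N_k$} obtained at the previous step. Via Skorokhod representation one couples each $\widehat N^{(r)}_k$ to a copy $\widehat R^{(\omega,r)}_k$ of the limit, writes $\widehat X_{k+1}(ty)/t^{\gamma+\omega k}$ as $t^{-\omega}(\widehat Z_1+\widehat Z_2)$, and shows separately that the coupling error $\widehat Z_1$ and the sum $\widehat Z_2$ of i.i.d.\ H\"older processes are $o_{\mmp}(t^{\omega})$ uniformly---the latter via a Rosenthal/tightness argument exploiting \eqref{hol}. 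So the induction is a genuine bootstrap: the FCLT at level $k$ is needed to kill $X_{k+1}$, not merely a second-moment estimate. Your proposal does not supply a substitute for this step.
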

\begin{cor}\label{aux1}
Relation \eqref{relation_main} entails that, for $j\in\mn$ and
$h>0$,
\begin{equation}\label{conv_sup_j}
t^{-\gamma-\omega(j-1)}\sup_{y\in
[0,\,1]}(N_j(yt+h)-N_j(yt))~\tp~0,\quad t\to\infty.
\end{equation}
\end{cor}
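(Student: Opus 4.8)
The plan is to split $N_j(yt+h)-N_j(yt)$ into a random part governed by \eqref{relation_main} and a purely deterministic drift, and to show that both become negligible once divided by $t^{\gamma+\omega(j-1)}$. Write $yt+h=(y+h/t)t$ and set
\[
Z^{(t)}_j(y):=\frac{N_j(yt)-V_j(yt)}{ac_{j-1}\,t^{\gamma+\omega(j-1)}},\qquad y\ge 0,
\]
the $j$th coordinate of the array in \eqref{relation_main} evaluated at $y$. Then for every $y\in[0,1]$,
\[
N_j(yt+h)-N_j(yt)=ac_{j-1}\,t^{\gamma+\omega(j-1)}\big(Z^{(t)}_j(y+h/t)-Z^{(t)}_j(y)\big)+\big(V_j(yt+h)-V_j(yt)\big).
\]
Since $N_j$ and $V_j$ are nondecreasing (the latter being the $j$th convolution power of the nondecreasing $V$), both increments on the right are $\ge 0$, so it suffices to prove that $t^{-\gamma-\omega(j-1)}\sup_{y\in[0,1]}\big(V_j(yt+h)-V_j(yt)\big)\to 0$ deterministically and that $\sup_{y\in[0,1]}\big|Z^{(t)}_j(y+h/t)-Z^{(t)}_j(y)\big|\tp 0$.

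For the deterministic term I would invoke \eqref{est_imp} with $u=h$ and $v=yt\le t$: uniformly in $y\in[0,1]$ it bounds $V_j(yt+h)-V_j(yt)$ by a constant (depending on $j,h$) plus fixed multiples of $t^{\omega j-1}$ (present only when $\omega j>1$), $t^{\omega j-\varepsilon_1}$ and $t^{\omega j-\varepsilon_2}$. Each of the exponents $0$, $\omega j-1$, $\omega j-\varepsilon_1$, $\omega j-\varepsilon_2$ is strictly smaller than $\gamma+\omega(j-1)$, as follows from $\gamma>0$ together with $\gamma>\omega-\min(1,\varepsilon_1,\varepsilon_2)$ (which is part of the hypothesis on $\gamma$). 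Hence the whole bound, divided by $t^{\gamma+\omega(j-1)}$, tends to $0$.

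For the random term, by \eqref{relation_main} the coordinate process $Z^{(t)}_j$ converges weakly, in the $J_1$-topology on $D[0,2]$ (legitimate once $t\ge h$, so that $y+h/t\le 2$ for $y\in[0,1]$), to $R^{(\omega)}_j$, which is a.s.\ continuous. I would then apply Skorokhod's representation theorem along an arbitrary sequence $t_n\to\infty$: on a suitable probability space $\widetilde Z^{(t)}_j\od Z^{(t)}_j$, $\widetilde R_j\od R^{(\omega)}_j$ with $\widetilde Z^{(t)}_j\to\widetilde R_j$ a.s.\ in $J_1$, hence — the limit being continuous — a.s.\ uniformly on $[0,2]$. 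Since
\[
\sup_{y\in[0,1]}\big|\widetilde Z^{(t)}_j(y+h/t)-\widetilde Z^{(t)}_j(y)\big|\le 2\sup_{x\in[0,2]}\big|\widetilde Z^{(t)}_j(x)-\widetilde R_j(x)\big|+\sup_{y\in[0,1]}\big|\widetilde R_j(y+h/t)-\widetilde R_j(y)\big|,
\]
and the right-hand side tends to $0$ a.s.\ ($h/t\to 0$ and $\widetilde R_j$ is uniformly continuous on $[0,2]$), the left-hand side tends to $0$ a.s.; equality in law of this real functional with $\sup_{y\in[0,1]}\big|Z^{(t)}_j(y+h/t)-Z^{(t)}_j(y)\big|$ then yields the asserted convergence in probability. (Equivalently, weak $J_1$-convergence to a continuous limit forces $\lim_{\delta\downarrow0}\limsup_{t\to\infty}\mmp\{w(\delta)>\varepsilon\}=0$ for all $\varepsilon>0$, where $w(\delta):=\sup\{|Z^{(t)}_j(p)-Z^{(t)}_j(q)|: p,q\in[0,2],\,|p-q|\le\delta\}$, and one notes $\sup_{y\in[0,1]}|Z^{(t)}_j(y+h/t)-Z^{(t)}_j(y)|\le w(h/t)$.) Combining the two parts in the displayed decomposition gives \eqref{conv_sup_j}. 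The step needing care is exactly this last one: because the shift $h/t$ depends on $t$, one cannot feed a single fixed functional into the continuous mapping theorem and must instead use the $C$-tightness/equicontinuity of $(Z^{(t)}_j)$ — available here precisely because the limit $R^{(\omega)}_j$ is continuous — or, as above, a Skorokhod coupling.
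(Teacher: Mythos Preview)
Your proof is correct and follows essentially the same route as the paper: split the increment into a centered (random) part and a deterministic drift, kill the drift by the growth estimates (the paper bounds $(yt+h)^{\omega j}-(yt)^{\omega j}$ directly, you invoke \eqref{est_imp}, which is equivalent), and handle the random part via the a.s.\ continuity of $R^{(\omega)}_j$. The only cosmetic difference is packaging of the random step: the paper phrases it as joint weak convergence of the pair $\big(Z^{(t)}_j(\cdot),\,Z^{(t)}_j(\cdot+h/t)\big)$ to the diagonal $(R^{(\omega)}_j,R^{(\omega)}_j)$ in $D\times D$, whereas you work with a single process on $D[0,2]$ and a Skorokhod coupling/$C$-tightness argument; these are two expressions of the same idea.
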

It is convenient to prove Corollary \ref{aux1} at this early
stage.
\begin{proof}
Fix any $j\in\mn$. Since $R_j^{(\omega)}$ is a.s.\ continuous,
relation \eqref{relation_main} in combination with \eqref{xyz}
ensures that, for any $h>0$,
$$\Big(\frac{N_j(t\cdot)-c_j(t\cdot)^{\omega j}}{ac_{j-1}t^{\gamma+\omega(j-1)}},
\frac{N_j(t\cdot+h)-c_j(t\cdot+h)^{\omega
j}}{ac_{j-1}t^{\gamma+\omega(j-1)}}\Big)~\Rightarrow~
(R_j^{(\omega)}(\cdot), R_j^{(\omega)}(\cdot)),\quad t\to\infty$$
in the $J_1$-topology on $D\times D$, whence
$$t^{-\gamma-\omega(j-1)}\sup_{y\in [0,\,1]}(N_j(yt+h)-N_j(yt)-c_j((yt+h)^{\omega j}-(yt)^{\omega j}))~\tp~ 0,\quad t\to\infty.$$ Using
\begin{eqnarray*}
\sup_{y\in [0,\,1]}((yt+h)^{\omega j}-(yt)^{\omega j})&\leq&
\1_{\{\omega j \in (0,1]\}}h^{\omega j}+\1_{\{\omega j>1\}}\omega
j(t+h)^{\omega j-1}h
\end{eqnarray*}
we conclude that the right-hand side is
$o(t^{\gamma+\omega(j-1)})$ as $t\to\infty$ because
$\gamma>\omega-1$ by assumption. The proof is complete.
\end{proof}

Theorem \ref{main10} follows, in its turn, from Propositions
\ref{limit234} and \ref{zero}. Below we use the processes $X_j$
and $Y_j$ as defined in \eqref{decomp}.
\begin{assertion}\label{limit234}
Suppose \eqref{1} and \eqref{flt_assumption}. Then
\begin{equation}\label{limit10000}
\bigg(\frac{N_1(t \cdot)-V_1(t \cdot)}{at^\gamma}, \bigg(
\frac{Y_j(t\cdot)}{ac_{j-1}t^{\gamma+\omega(j-1)}}\bigg)_{j\geq
2}\bigg)~\Rightarrow~ (R^{(\omega)}_j(\cdot))_{j\in\mn},\quad
t\to\infty,
\end{equation}
in the $J_1$-topology on $D^\mn$.
\end{assertion}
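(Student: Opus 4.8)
The plan is to reduce the joint functional convergence in \eqref{limit10000} to the single assumption \eqref{flt_assumption} via the continuous-mapping theorem. Recall that for $j\ge 2$ the component $Y_j$ was shown in the proof of Lemma \ref{aux}(c) to admit the representation $Y_j(s)=\int_{[0,\,s]}V_{j-1}(s-x)\,{\rm d}(N_1(x)-V_1(x))$, i.e.\ $Y_j$ is a deterministic linear functional of the single process $N_1-V_1$. The same is trivially true of the first component $N_1(t\cdot)-V_1(t\cdot)$. Hence, writing $Z_t(\cdot):=(N_1(t\cdot)-V_1(t\cdot))/(at^\gamma)$, which converges weakly to $W$ in $(D,J_1)$ by \eqref{flt_assumption} and the equivalence \eqref{flt_assumption2}, it suffices to exhibit, for each $j\ge 2$, a map $\Phi_j\colon D\to D$, continuous at continuous paths, such that
\begin{equation*}
\frac{Y_j(t\cdot)}{ac_{j-1}t^{\gamma+\omega(j-1)}}=\Phi_j(Z_t)+o_{\mathrm P}(1),\qquad t\to\infty,
\end{equation*}
with $\Phi_j(W)=R_j^{(\omega)}$ a.s., and then apply the continuous-mapping theorem to the vector map $w\mapsto(w,(\Phi_j(w))_{j\ge2})$ on $D^{\mn}$ equipped with the product topology.

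To identify $\Phi_j$, I would substitute the asymptotics $V_{j-1}(s)\sim c_{j-1}s^{\omega(j-1)}$ from \eqref{asy100} into the integral representation of $Y_j$. A change of variables $x=ty$ gives
\begin{equation*}
\frac{Y_j(ts)}{ac_{j-1}t^{\gamma+\omega(j-1)}}=\int_{[0,\,s]}\frac{V_{j-1}(t(s-y))}{c_{j-1}t^{\omega(j-1)}}\,{\rm d}Z_t(y),
\end{equation*}
and since $V_{j-1}(t(s-y))/(c_{j-1}t^{\omega(j-1)})\to(s-y)^{\omega(j-1)}$ uniformly on compacts (with the error controlled by \eqref{xyz}), the natural candidate is
\begin{equation*}
\Phi_j(w)(s):=\int_{[0,\,s]}(s-y)^{\omega(j-1)}\,{\rm d}w(y),
\end{equation*}
interpreted through integration by parts exactly as in the definition of $R^{(\omega)}_j$ preceding Theorem \ref{main0}; thus $\Phi_j(W)=R^{(\omega)}_j$. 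Continuity of $\Phi_j$ at continuous $w$ in the $J_1$ (hence locally uniform) topology follows from the integration-by-parts formula $\Phi_j(w)(s)=\omega(j-1)\int_0^s(s-y)^{\omega(j-1)-1}w(y)\,{\rm d}y$, which depends on $w$ only through its sup-norm on compacts and is manifestly continuous there.

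The remaining work is to justify that the difference between $Y_j(t\cdot)/(ac_{j-1}t^{\gamma+\omega(j-1)})$ and $\Phi_j(Z_t)$ is asymptotically negligible, uniformly on $[0,T]$ for each $T$. Using the integration-by-parts form, this difference equals $\int_0^s K_t(s,y)\,Z_t(y)\,{\rm d}y$ where $K_t(s,y)\to 0$ pointwise and is dominated by an integrable function, owing to the bounds \eqref{xyz} on $V_{j-1}(u)-c_{j-1}u^{\omega(j-1)}$ (of lower order $u^{\omega(j-1)-\varepsilon}$); since $\sup_{y\le T}|Z_t(y)|$ is tight (as $Z_t\Rightarrow W$ and the sup is a continuous functional), a dominated-convergence estimate gives the $o_{\mathrm P}(1)$ bound. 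I expect this uniform negligibility estimate — handling the boundary behaviour of the kernel $(s-y)^{\omega(j-1)-1}$ near $y=s$ when $\omega(j-1)<1$, and combining the deterministic error bound with the tightness of $Z_t$ — to be the main technical obstacle; everything else is continuous mapping plus the already-established input \eqref{flt_assumption}. Finally, to get joint convergence of the whole $\mn$-indexed vector rather than each finite subfamily, one notes that the product topology on $D^{\mn}$ is metrisable and convergence of all finite-dimensional marginals (here, all finite sub-vectors of components, each obtained by the above argument applied to the single driving process $Z_t$) suffices, since the limit $(R^{(\omega)}_j)_{j\in\mn}$ is a measurable function of $W$ and the maps $w\mapsto(w,\Phi_2(w),\dots,\Phi_m(w))$ are jointly continuous.
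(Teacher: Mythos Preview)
Your continuous-mapping strategy is sound and genuinely different from the paper's route. The paper first invokes Skorokhod's representation to upgrade \eqref{flt_assumption2} to a.s.\ locally uniform convergence $\widehat Z_t\to\widehat W$, then bounds the error $\big|\int(\widehat Z_t-\widehat W)\,{\rm d}_y(-V_{j-1}(t(s-y)))\big|\le\sup|\widehat Z_t-\widehat W|\cdot V_{j-1}(ts)$ to replace the random integrand by $\widehat W$. This leaves the $t$-dependent functional $\int_{(0,s]}W(y)\,{\rm d}_y(-V_{j-1}(t(s-y)))/(c_{j-1}t^{\omega(j-1)})$, whose weak convergence to $R^{(\omega)}_j$ the paper then establishes by computing Gaussian covariances (finite-dimensional convergence) and proving tightness via the H\"older bound \eqref{sam22} on $W$. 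You instead keep $Z_t$ and replace the integrating measure by its power-law limit, which collapses the whole argument to a single continuous map $\Phi_j$. Your route is more direct and avoids the Gaussian-specific covariance calculation; the paper's route has the advantage that its error bound is a simple sup times a bounded mass.

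There is, however, a gap in your error analysis. You cannot in general write the remainder as $\int_0^s K_t(s,y)Z_t(y)\,{\rm d}y$ with a density $K_t$, because nothing in the hypotheses forces $V$ (hence $V_{j-1}$) to be absolutely continuous; $V$ may well have atoms. The fix is to integrate by parts in the other direction. With $F_t(u):=V_{j-1}(tu)/(c_{j-1}t^{\omega(j-1)})$ and $F(u):=u^{\omega(j-1)}$ one has, since $Z_t(0)=F_t(0)-F(0)=0$,
\[
\Big|\int_{[0,s]}Z_t(s-u)\,{\rm d}(F_t-F)(u)\Big|\le \sup_{u\in[0,T]}|F_t(u)-F(u)|\cdot{\rm TV}\big(Z_t\big)[0,T].
\]
By \eqref{xyz}, $\sup_{[0,T]}|F_t-F|=O(t^{-\min(\varepsilon_1,\varepsilon_2)})+O(t^{-\omega(j-1)})$, while ${\rm TV}(Z_t)[0,T]\le(N_1(tT)+V_1(tT))/(at^\gamma)=O_{\mathrm P}(t^{\omega-\gamma})$ from \eqref{flt_assumption2} and \eqref{asy100}. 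The product is $o_{\mathrm P}(1)$ uniformly in $s\in[0,T]$ precisely because of the standing constraint $\gamma>\omega-\min(1,\varepsilon_1,\varepsilon_2)$. With this correction your argument goes through; note that it makes essential use of that lower bound on $\gamma$, whereas the paper's fdd-plus-tightness step does not.
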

\begin{assertion}\label{zero}
Suppose \eqref{1}, \eqref{vari} and \eqref{flt_assumption}. Then,
for each integer $j\geq 2$ and each $T>0$,
\begin{equation}\label{inter2023}
t^{-(\gamma+\omega(j-1))}\sup_{y\in [0,\,T]}X_j(ty)~\tp~0,\quad
t\to\infty.
\end{equation}
\end{assertion}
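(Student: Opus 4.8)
Fix $j\ge 2$ and $T>0$. The plan is to prove the (formally stronger) statement that the rescaled process $Z_t:=\big(t^{-(\gamma+\omega(j-1))}X_j(t\,\cdot)\big)_{y\in[0,T]}$ converges weakly to the zero process in $D[0,T]$ with the $J_1$‑topology; since the limit is continuous and $f\mapsto\sup_{[0,T]}|f|$ is $J_1$‑continuous, this yields \eqref{inter2023} (two‑sidedly). I would run the argument by induction on $j$, so that throughout one may assume the conclusion of Theorem \ref{main10} is already available at level $j-1$; for $j=2$ this is exactly \eqref{flt_assumption}, rewritten via the Remark. Write $\xi^{(k)}(u):=N_{j-1}^{(k)}(u)-V_{j-1}(u)$ for the independent copies in \eqref{basic decomp}, so $X_j(s)=\sum_{k\ge 1}\xi^{(k)}(s-T_k)\ind{T_k\le s}$, and denote by $w_{[0,\,b]}(f,\delta):=\sup\{|f(x)-f(y)|:0\le x,y\le b,\ |x-y|\le\delta\}$ the oscillation of $f$ over $[0,b]$ at scale $\delta$.

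\emph{Finite–dimensional convergence.} For fixed $y$ I would condition on $(T_k)_{k\ge1}$: the summands in \eqref{basic decomp} are then independent and centered, so $\me[X_j(s)^2\mid(T_k)]=\sum_{k:\,T_k\le s}\mathrm{Var}\,N_{j-1}(s-T_k)\le\sum_{k:\,T_k\le s}\me\sup_{v\in[0,\,s-T_k]}(N_{j-1}(v)-V_{j-1}(v))^2$. Taking expectations, using $\me\sum_k g(T_k)\ind{T_k\le s}=\int_{[0,\,s]}g\,\mathrm dV$ together with \eqref{variest} and the integral estimate established in the proof of Lemma \ref{aux}(a), gives $\me X_j(s)^2=O(s^{\,2\gamma+\omega(2j-3)})=o(t^{\,2\gamma+2\omega(j-1)})$ (a margin of $t^{-\omega}$). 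Hence $Z_t(y)\tp0$ for every $y$, so all finite–dimensional distributions of $Z_t$ tend to those of the zero process. This is the only step in which cancellation among the independent summands — and the resulting factor $t^{-\omega}$ — is essential.

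\emph{Tightness.} Since the limit is continuous it suffices to show $\lim_{h\downarrow0}\limsup_{t\to\infty}\mmp\{w_{[0,\,T]}(Z_t,h)\ge\varepsilon\}=0$ for each $\varepsilon>0$. Partition $[0,tT]$ into cells $I_m=[a_m,a_m+ht]$ and, for $s\in I_m$, split $X_j(s)-X_j(a_m)=A_m(s)+B_m(s)$ with $A_m(s):=\sum_{k:\,T_k\le a_m}\big(\xi^{(k)}(s-T_k)-\xi^{(k)}(a_m-T_k)\big)$ (atoms already present at $a_m$) and $B_m(s):=\sum_{a_m<T_k\le s}\xi^{(k)}(s-T_k)$ (atoms crossed inside $I_m$). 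The $B$‑term is handled crudely: each index lies in at most one cell, so $\sum_m\sup_{s\in I_m}|B_m(s)|\le\sum_{0<T_k\le tT}\sup_{u\in[0,\,ht]}|\xi^{(k)}(u)|$, whose expectation equals $V(tT)\,\me\sup_{u\in[0,\,ht]}|\xi^{(1)}(u)|\le V(tT)\big(\me\sup_{u\in[0,\,ht]}(N_{j-1}(u)-V_{j-1}(u))^2\big)^{1/2}=O\big(h^{\gamma+\omega(j-2)}\,t^{\gamma+\omega(j-1)}\big)$ by \eqref{variest} and $V(tT)=O(t^\omega)$; by Markov the associated probability is $O(h^{\gamma+\omega(j-2)}/\varepsilon)$, which vanishes as $h\downarrow0$ because $\gamma+\omega(j-2)\ge\gamma>0$.

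\emph{The $A$‑term is the main obstacle,} and the key point is that one must \emph{not} route it through the monotone counting process $N_j$: its centered version $N_j-V_j$ is genuinely of the target order $t^{\gamma+\omega(j-1)}$, so any bound passing through it is only stochastically bounded after rescaling, not negligible. Instead I would use that for $s\in I_m$ and $T_k\le a_m$ the arguments $s-T_k,\ a_m-T_k$ lie in $[0,tT]$ and differ by at most $ht$, so $|\xi^{(k)}(s-T_k)-\xi^{(k)}(a_m-T_k)|\le w_{[0,\,tT]}(\xi^{(k)},ht)$ and hence $\max_m\sup_{s\in I_m}|A_m(s)|\le\sum_{0<T_k\le tT}w_{[0,\,tT]}(\xi^{(k)},ht)$, with expectation $V(tT)\,\me\,w_{[0,\,tT]}(\xi^{(1)},ht)$. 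Here the inductive hypothesis enters: $\xi^{(1)}(t\,\cdot)/t^{\gamma+\omega(j-2)}$ converges weakly in $D[0,T]$ to the continuous process $ac_{j-2}R^{(\omega)}_{j-1}$, while by \eqref{variest} the rescaled suprema are bounded in $L^2$, so the moduli $w_{[0,\,T]}\big(\xi^{(1)}(t\,\cdot)/t^{\gamma+\omega(j-2)},h\big)$ are uniformly integrable; therefore $\me\,w_{[0,\,tT]}(\xi^{(1)},ht)=t^{\gamma+\omega(j-2)}\big(\me\,w_{[0,\,T]}(ac_{j-2}R^{(\omega)}_{j-1},h)+o(1)\big)$, so $\me\big[\max_m\sup_{s\in I_m}|A_m(s)|\big]=O(t^{\gamma+\omega(j-1)})\big(\me\,w_{[0,\,T]}(R^{(\omega)}_{j-1},h)+o(1)\big)$. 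Markov then gives $\limsup_{t}\mmp\{\max_m\sup_{s\in I_m}|A_m(s)|\ge\varepsilon t^{\gamma+\omega(j-1)}\}=O\big(\me\,w_{[0,\,T]}(R^{(\omega)}_{j-1},h)/\varepsilon\big)\to0$ as $h\downarrow0$, by continuity of $R^{(\omega)}_{j-1}$ and dominated convergence. Combining with the $B$‑bound and the elementary estimate $w_{[0,\,T]}(Z_t,h)\le 4t^{-(\gamma+\omega(j-1))}\big(\max_m\sup_{s\in I_m}|A_m(s)|+\sum_m\sup_{s\in I_m}|B_m(s)|\big)$ (any $y,y'$ with $|y-y'|\le h$ meet at most two adjacent cells) completes the tightness proof, and with it \eqref{inter2023}. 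I expect the delicate point to be precisely this passage from the weak convergence at level $j-1$ to the \emph{expectation} bound on the modulus $\me\,w_{[0,\,tT]}(\xi^{(1)},ht)$, since it is what allows one to add up the $\asymp V(tT)\asymp t^\omega$ individual oscillations with no cancellation and still remain $o(t^{\gamma+\omega(j-1)})$ once $h$ is small.
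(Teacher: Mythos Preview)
Your argument is correct, and its inductive scaffolding is the same as the paper's: one needs the functional limit of Theorem~\ref{main10} at level $j-1$ (for $j=2$ this is \eqref{flt_assumption2}) to establish \eqref{inter2023} at level $j$, which in turn feeds back into Proposition~\ref{limit234} to yield the limit at level~$j$. Inside the inductive step, however, you take a genuinely different route. The paper passes to a Skorokhod realisation and splits $X_{k+1}$ into a sum $\widehat Z_1$ of ``errors to the Gaussian limit'' $\eta_r(t)$ plus a sum $\widehat Z_2$ of independent copies of the limit $R_k^{(\omega)}$; $\widehat Z_1$ is then handled by an LLN-type argument (since $\me\eta_1(t)\to0$ while the number of summands is $\asymp t^\omega$), and $\widehat Z_2$ by conditional finite-dimensional convergence plus a Rosenthal-type tightness bound exploiting the H\"older continuity of $R_k^{(\omega)}$. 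You instead prove weak convergence of the rescaled $X_j(t\cdot)$ to zero directly: finite-dimensional convergence from the unconditional second-moment bound $\me X_j(s)^2=O(s^{2\gamma+\omega(2j-3)})$, and tightness by a cell decomposition, bounding the ``old atoms'' part by $\sum_{T_k\le tT}w_{[0,\,tT]}(\xi^{(k)},ht)$ and using that the expected modulus of a single summand converges (via the inductive FLT, continuity of $w(\cdot,h)$ at continuous paths, and uniform integrability from \eqref{variest}) to that of $R^{(\omega)}_{j-1}$, which vanishes as $h\downarrow0$. Your route avoids both the Skorokhod coupling and Rosenthal's inequality, trading them for a first-moment Markov bound; the paper's decomposition is conceptually cleaner in separating ``distance to the limit'' from ``behaviour of the limit'', and its tightness criterion for $\widehat Z_2$ is the textbook Kolmogorov-type moment condition.
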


\subsection{Connecting two ways of box-counting}\label{box}

We retreat for a while from our main theme to focus on Karlin's occupancy scheme with deterministic probabilities $(p_k)_{k\in\mn}$.
By the law of large numbers a box of probability $p$ gets occupied by about $np$ balls, provided  $np$ is big enough.
This suggests to relate counting the boxes occupied by at least $n^{1-s}$ balls to the number of
boxes with probability at least $n^{-s}$.
Let $\bar \rho(t):=\#\{k\in\mn: p_k\geq 1/t\}$ for
$t>0$, and let  $\bar K_{n,r}$ be the number of boxes containing
exactly $r$ out of $n$ balls.
We shall estimate uniformly the difference between
$$\bar K_n(s):=\sum_{r=\lceil n^{1-s}\rceil}^n \bar K_{n,r}\,,\quad s\in [0,1],$$
and $(\bar \rho(n^s))_{s\in[0,1]}$. Proposition 4.1 in
\cite{Alsmeyer+Iksanov+Marynych:2017}. However, we did not succeed
to apply the cited proposition directly and will combine the
estimates obtained in its proof.
\begin{assertion}\label{prop1}
The following universal estimate holds for each $n\in\mn$
\begin{eqnarray}\label{appr}
\me \sup_{s\in [0,1]}\big|\bar K_n(s)-\bar \rho(n^s)\big|&\leq&
4\big(\bar \rho(n)-\bar \rho(y_0 n(\log n)^{-2})\big)+2\bar
\rho(n)(\log n)^{-1}\\&+&\int_1^\infty x^{-2}(\bar \rho(nx)-\bar
\rho(n)){\rm d}x+2\sup_{s\in [0,1]}(\bar \rho(en^s)-\bar
\rho(e^{-1}n^s)),\notag
\end{eqnarray}
where $y_0\in (0,1)$ is a constant which does not depend on $n$,
nor on $(p_k)_{k\in\mn}$.
\end{assertion}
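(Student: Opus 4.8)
The plan is to compare $\bar K_n(s)$ with $\bar\rho(n^s)$ box by box, splitting boxes into three regimes according to how the box-probability $p_k$ compares with $n^{-s}$, and then to control the discrepancy uniformly in $s$ by a chaining/maximal-inequality argument over the random occupancy numbers. Write $\bar K_{n,\geq r}$ for the number of boxes containing at least $r$ balls, so $\bar K_n(s)=\bar K_{n,\geq\lceil n^{1-s}\rceil}$. For a single box of probability $p$, the number of balls it receives is $\mathrm{Binomial}(n,p)$; a box with $np$ large is very likely to exceed $n^{1-s}$ when $p\geq n^{-s}$, and a box with $np$ small is very likely to stay below $n^{1-s}$ when $p< n^{-s}$. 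The key point is that the ``ambiguous'' boxes — those whose probability is within a logarithmic factor of the threshold $n^{-s}$ — are exactly the ones counted by terms like $\bar\rho(en^s)-\bar\rho(e^{-1}n^s)$ and $\bar\rho(n)-\bar\rho(y_0 n(\log n)^{-2})$ in \eqref{appr}, while the remaining boxes contribute only a Poisson-type fluctuation that is absorbed into the other terms.

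Concretely, I would proceed in the following steps. \emph{Step 1 (reduction to a fixed grid in $s$).} Since $s\mapsto\bar K_n(s)$ and $s\mapsto\bar\rho(n^s)$ are both monotone in $s$, a standard sandwiching argument reduces the supremum over $s\in[0,1]$ to a supremum over the finitely many ``jump values'' $s$ at which $\lceil n^{1-s}\rceil$ changes — i.e.\ $s$ of the form $1-\log_n r$ for $r=1,\dots,n$ — at the cost of the oscillation term $2\sup_{s}(\bar\rho(en^s)-\bar\rho(e^{-1}n^s))$; this is why that term appears additively in \eqref{appr}. \emph{Step 2 (upper tail: boxes that are wrongly counted by $\bar K_n$ but not by $\bar\rho$).} These are boxes with $p_k<n^{-s}$ receiving $\geq n^{1-s}$ balls; bound $\E$ of their number, summed suitably, using a Chernoff/Bernstein estimate for $\mathrm{Binomial}(n,p_k)$ deviating above $n^{1-s}$, then split off the boxes with $p_k$ very close to $n^{-s}$ (contributing to $\bar\rho(en^s)-\bar\rho(e^{-1}n^s)$ and, near $s=1$, to the $\bar\rho(n)(\log n)^{-1}$ term) from those with $p_k\ll n^{-s}$, whose contribution is geometrically small and telescopes into $\int_1^\infty x^{-2}(\bar\rho(nx)-\bar\rho(n))\,\mathrm dx$ after a change of variables $x\leftrightarrow$ probability ratio. \emph{Step 3 (lower tail: boxes counted by $\bar\rho$ but missed by $\bar K_n$).} Symmetrically, a box with $p_k\geq n^{-s}$ receiving $<n^{1-s}$ balls; again a deviation estimate for the lower tail of $\mathrm{Binomial}(n,p_k)$, separating $p_k$ near the threshold (the $\bar\rho(en^s)-\bar\rho(e^{-1}n^s)$ term again) from $p_k$ well above it. Here one also has to be careful when $n^{1-s}$ is $O(1)$, i.e.\ $s$ near $1$: that is exactly where the term $4(\bar\rho(n)-\bar\rho(y_0n(\log n)^{-2}))$ and the factor $(\log n)^{-1}$ enter, via a more delicate Poisson-approximation bound on $\mathrm{Binomial}(n,p)$ being unexpectedly small for $p$ of order $n^{-1}$. \emph{Step 4 (assemble).} Add the bounds from Steps 2–3 and the grid-oscillation term from Step 1, identify each piece with the corresponding term on the right of \eqref{appr}, and check the universality of the constant $y_0$ (it comes out of the Chernoff exponent and does not see $n$ or $(p_k)$).

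The main obstacle is \emph{making the deviation estimates genuinely uniform in $s$ simultaneously with being summable over $k$}: for each fixed $s$ the binomial tail bounds are routine, but one must organize them so that a single sum over $k$ — reindexed by the ratio $np_k$ — produces the $\int_1^\infty x^{-2}(\cdots)\,\mathrm dx$ expression with no residual $s$-dependence, and so that the ``small $r$'' regime $s\approx 1$ (where Gaussian/Chernoff bounds degrade and one needs Poisson comparison for $\mathrm{Binomial}(n,p)$ with $np=O(1)$) is handled by a genuinely different but still universal estimate. Since all the hard work of these estimates is already carried out in the proof of Proposition~4.1 of \cite{Alsmeyer+Iksanov+Marynych:2017}, the proof here consists in extracting those intermediate bounds and recombining them in the form \eqref{appr}; I would cite that proof for the two tail estimates and only spell out the bookkeeping of Steps 1 and 4.
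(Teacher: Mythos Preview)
Your bottom line --- that the proof consists of extracting the intermediate bounds from the proof of Proposition~4.1 in \cite{Alsmeyer+Iksanov+Marynych:2017} and recombining them --- is exactly what the paper does, so in spirit you have it right. The organisational details differ, however, and one of your steps is a detour.

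The paper does \emph{not} perform your Step~1 grid reduction. It works pointwise: writing $\bar Z_{n,k}$ for the number of balls in box $k$, one has for every $s$
\[
|\bar K_n(1-s)-\bar\rho(n^{1-s})|\leq A_n(s)+B_n(s)+C_n(s),
\]
where $A_n(s)=\sum_k\1_{\{\bar Z_{n,k}\geq n^s,\,1\leq np_k\leq n^s\}}$, $B_n(s)=\sum_k\1_{\{\bar Z_{n,k}\geq n^s,\,np_k<1\}}$, $C_n(s)=\sum_k\1_{\{\bar Z_{n,k}\leq n^s,\,np_k\geq n^s\}}$. Then $\me\sup_s A_n(s)$ and $\me\sup_s C_n(s)$ are bounded directly by the estimates on pp.~1004--1006 of \cite{Alsmeyer+Iksanov+Marynych:2017}; it is \emph{inside those bounds} (via the Chernoff analysis there) that the oscillation term $\sup_s(\bar\rho(en^s)-\bar\rho(e^{-1}n^s))$ and the terms involving $\bar\rho(n)-\bar\rho(y_0n(\log n)^{-2})$ and $\bar\rho(n)/\log n$ appear --- not via a sandwiching over grid points as you suggest. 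So your attribution ``this is why that term appears additively'' is off: the $e$-factor has nothing to do with the spacing of $\lceil n^{1-s}\rceil$-jump points.

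The integral term is simpler than you describe: since $\sup_{s\in[0,1]}B_n(s)=\sum_k\1_{\{\bar Z_{n,k}\geq 1,\,np_k<1\}}$, one bounds its expectation by $\sum_k np_k\1_{\{np_k<1\}}$ and rewrites this as $\int_1^\infty x^{-2}(\bar\rho(nx)-\bar\rho(n))\,{\rm d}x$ by integration by parts --- no telescoping or change of variables is needed. So your Steps~2 and~3 should be split as $A_n,C_n$ (cite the reference verbatim) plus the separate elementary bound on $B_n$; your Step~1 can be dropped entirely.
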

\begin{proof}
For $k\in\mn$, denote by $\bar Z_{n,k}$ the number of balls falling  in the
$k$th box, so that
$$\bar K_n(s)=\sum_{k\in\mn}\1_{\{n^{1-s}\leq \bar Z_{n,k}\leq n\}},~s\in [0,1].$$
Then, for
$n\in\mn$ and $s\in [0,1]$,
\begin{eqnarray*}
|\bar K_n(1-s)-\bar \rho(n^{1-s})|&\leq& \sum_{{k\in\mn}}\1_{\{\bar
Z_{n,k}\geq n^s,\, 1\leq np_k\leq n^s\}}+\sum_{{k\in\mn}}\1_{\{\bar
Z_{n,k}\geq n^s,\, np_k<1\}}+\sum_{{k\in\mn}}\1_{\{\bar Z_{n,k}\leq
n^s,\, np_k\geq n^s\}}\\&:=&A_n(s)+B_n(s)+C_n(s).
\end{eqnarray*}
In \cite{Alsmeyer+Iksanov+Marynych:2017} it was shown that,
for $n\in\mn$, $$\me \sup_{s\in [0,1]}A_n(s)\leq 2(\bar \rho(n)-\bar \rho(y_0 n(\log n)^{-2}))+\frac{\bar \rho(n)}{\log n}+
\sup_{s\in [0,1]}(\bar \rho(en^s)-\bar \rho(n^s))$$ (see \cite{Alsmeyer+Iksanov+Marynych:2017}, pp.~1004--1005) and
$$\me \sup_{s\in [0,1]}C_n(s)\leq 2(\bar \rho(n)-\bar \rho(y_0 n(\log n)^{-2}))
+\frac{\bar \rho(n)}{\log n}+\sup_{s\in [0,1]}(\bar \rho(n^s)-\bar
\rho(e^{-1}n^s))$$ (see \cite{Alsmeyer+Iksanov+Marynych:2017}, p. 1006). Finally,
for $n\in\mn$,
\begin{eqnarray*}
\me \sup_{s\in [0,1]}B_n(s)&=&\me \sum_{k\in\mn}\1_{\{\bar
Z_{n,k}\geq 1,\,np_k<1\}}=\sum_{k\in\mn}
(1-(1-p_k)^n)\1_{\{np_k<1\}}\leq \sum_{k\in\mn}
np_k\1_{\{np_k<1\}}\\&=&\int_{(1,\infty)}\frac{1}{x}{\rm d}(\bar
\rho(nx)-\bar \rho(n))=\int_1^\infty \frac{\bar \rho(nx)-\bar
\rho(n)}{x^2} {\rm d}x.
\end{eqnarray*}
Combining the estimates we arrive at \eqref{appr} because
$$\sup_{s\in [0,1]}\big|\bar K_n(s)-\bar \rho(n^s)\big|=\sup_{s\in
[0,1]}\big|\bar K_n(1-s)-\bar \rho(n^{1-s})\big|.$$
\end{proof}

We apply next Proposition \ref{prop1} to the
setting of Theorem \ref{main0}. This result shows that
\eqref{relation_main5000} is equivalent to the analogous limit relation
with $\rho_j(n^t)=N_j(t \log n)$ replacing $K_{n,j}(t)$.
\begin{assertion}\label{Propo3.3}
Suppose \eqref{1} and \eqref{flt_assumption}. Then, for each
$j\in\mn$,
\begin{equation}\label{uniform_est}
\frac{\sup_{s\in [0,1]}\big|K_{n,j}(s)- \rho_j(n^s)|}{(\log
n)^{\gamma+\omega(j-1)}}~\tp~ 0, \quad n\to\infty.
\end{equation}
\end{assertion}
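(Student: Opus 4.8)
The plan is to derive \eqref{uniform_est} from the universal estimate in Proposition \ref{prop1} by specializing it to the random probabilities of level-$j$ boxes and then controlling each of the four terms on the right-hand side of \eqref{appr}. For fixed $j\in\mn$, apply Proposition \ref{prop1} conditionally on $(P(v))_{v\in\mathbb{V}_j}$, so that $\bar\rho$ becomes $\rho_j$, $\bar K_n$ becomes $K_{n,j}$, and taking expectations over the environment gives
\[
\me\sup_{s\in[0,1]}\bigl|K_{n,j}(s)-\rho_j(n^s)\bigr|\leq 4\,\me\bigl(\rho_j(n)-\rho_j(y_0 n(\log n)^{-2})\bigr)+2(\log n)^{-1}\me\,\rho_j(n)+\me\!\int_1^\infty x^{-2}\bigl(\rho_j(nx)-\rho_j(n)\bigr)\,{\rm d}x+2\,\me\sup_{s\in[0,1]}\bigl(\rho_j(en^s)-\rho_j(e^{-1}n^s)\bigr).
\]
Writing everything in terms of $N_j(t)=\rho_j(e^t)$ with $t=\log n$ and recalling $V_j(t)=\me N_j(t)$, the plan is to show each of the four terms is $o\bigl((\log n)^{\gamma+\omega(j-1)}\bigr)$, after which Markov's inequality yields convergence in probability.

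First I would handle the second and third terms, which are the easy ones. By \eqref{asy100}, $\me\,\rho_j(n)=V_j(\log n)\sim c_j(\log n)^{\omega j}$, so $(\log n)^{-1}\me\,\rho_j(n)=O((\log n)^{\omega j-1})$, and since $\gamma>\omega-1$ gives $\gamma+\omega(j-1)>\omega j-1$, this is of smaller order. For the integral term, interchange expectation and integration (Tonelli) to get $\int_1^\infty x^{-2}(V_j(\log n+\log x)-V_j(\log n))\,{\rm d}x$; then bound the integrand using \eqref{est_imp} with $u=\log x$, $v=\log n$. The dominant contribution from \eqref{est_imp} is the term $c_j\,\omega j(\log n+\log x)^{\omega j-1}\log x$ (when $\omega j>1$; the subadditive bound $(\log x)^{\omega j}$ when $\omega j\leq 1$), and since $\int_1^\infty x^{-2}(\log x)^{\omega j-1}(\log(n)+\log x)\,{\rm d}x$ grows only like $(\log n)$ times a constant, the whole integral is $O((\log n)^{\omega j-1})$, again negligible. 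The remaining pieces of \eqref{est_imp} contribute the lower-order powers $t^{\omega j-\varepsilon_i}$, which are likewise dominated.

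The fourth (oscillation) term is where Corollary \ref{aux1} enters: $\me\sup_{s\in[0,1]}(\rho_j(en^s)-\rho_j(e^{-1}n^s))=\me\sup_{s\in[0,1]}(N_j(s\log n+1)-N_j(s\log n-1))$, and an application of \eqref{conv_sup_j} with $h=2$ and $t=\log n$ gives that $(\log n)^{-\gamma-\omega(j-1)}\sup_{y\in[0,1]}(N_j(y\log n+2)-N_j(y\log n))\tp 0$; to upgrade this convergence in probability to convergence in $L^1$ one invokes uniform integrability, which follows from the $L^2$ bound \eqref{variest} together with \eqref{est_imp} (the supremum of $N_j(y\log n+2)-N_j(y\log n)$ is bounded in $L^1$ after dividing by $(\log n)^{\gamma+\omega(j-1)}$, since it splits into a centered part controlled by \eqref{variest} and a deterministic part controlled by \eqref{est_imp}). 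The first term, $4\,\me(\rho_j(n)-\rho_j(y_0n(\log n)^{-2}))=4(V_j(\log n)-V_j(\log n-2\log\log n+\log y_0))$, is estimated directly from \eqref{est_imp} with $u=2\log\log n-\log y_0$ and $v=\log n-u$: this yields a bound of order $(\log n)^{\omega j-1}\log\log n$ (for $\omega j>1$) or $(\log\log n)^{\omega j}$ (for $\omega j\le1$), plus the lower-order $t^{\omega j-\varepsilon_i}$ corrections — all $o((\log n)^{\gamma+\omega(j-1)})$ since $\gamma+\omega(j-1)>\omega j-1$.

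The main obstacle I anticipate is not any single estimate but the bookkeeping needed to confirm that \emph{every} exponent appearing — $\omega j-1$ (times logarithmic factors), $\omega j-\varepsilon_1$, $\omega j-\varepsilon_2$ — is strictly below the target exponent $\gamma+\omega(j-1)$; this rests entirely on the standing hypothesis $\gamma>\omega-\min(1,\varepsilon_1,\varepsilon_2)$ from \eqref{vari}, and one must keep track of which regime ($\omega j\le1$ vs.\ $\omega j>1$) is in force at each use of \eqref{est_imp}. A secondary technical point is justifying the $L^1$-upgrade in the oscillation term, i.e.\ producing an integrable dominating random variable for $(\log n)^{-\gamma-\omega(j-1)}\sup_s(N_j(s\log n+2)-N_j(s\log n))$; this is where \eqref{variest} does its work, but it requires care because one is bounding a supremum of increments rather than a single increment.
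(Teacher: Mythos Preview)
Your approach is correct and matches the paper's closely: both apply Proposition~\ref{prop1} conditionally on the environment and then dispose of the four resulting terms exactly as you describe, with the exponent bookkeeping resting on $\gamma>\omega-\min(1,\varepsilon_1,\varepsilon_2)$.

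The one genuine difference is in the oscillation term. You take full expectations from the outset and therefore need to upgrade the convergence in probability from Corollary~\ref{aux1} to $L^1$-convergence via uniform integrability; this works, using \eqref{variest} for the centered part and \eqref{est_imp} for the deterministic increment of $V_j$, but it is the extra step you flagged as delicate. The paper sidesteps this entirely: it keeps the \emph{conditional} bound \eqref{asymp567} and observes that the right-hand side, divided by $(\log n)^{\gamma+\omega(j-1)}$, converges to zero \emph{in probability} (Markov for the first three terms, Corollary~\ref{aux1} directly for the fourth). Then, writing $Z_n=\sup_s|K_{n,j}(s)-\rho_j(n^s)|$ and $W_n=\me[Z_n\mid (P(v))]$, conditional Markov gives $\mmp\{Z_n>\varepsilon a_n\}\le \me\bigl[\min(1,W_n/(\varepsilon a_n))\bigr]$, and since $W_n/a_n\tp 0$ while the integrand is bounded by~$1$, dominated convergence finishes the job without any $L^2$ input. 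So the paper's route is slightly cleaner on exactly the point you identified as the main obstacle, but your argument is sound as written.
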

\begin{proof}
Fix any $j\in\mn$. By Proposition \ref{prop1},
for $n\in\mn$,
\begin{eqnarray}\label{asymp567}
&&\me \Big(\sup_{s\in
[0,1]}\big|K_{n,j}(s)-\rho_j(n^s)\big|\Big|(P_k)_{k\in\mn}\Big)\\&\leq&
4\big(\rho_j(n)-\rho_j(y_0 n(\log n)^{-2})\big)+2\rho_j(n)(\log
n)^{-1}\notag\\&+&\int_1^\infty x^{-2}(\rho_j(nx)-\rho_j(n)){\rm
d}x+2\sup_{s\in [0,1]}(\rho_j(en^s)-\rho_j(e^{-1}n^s)).\notag
\end{eqnarray}

Recall the notation
$$c_j=\frac{(c\Gamma(\omega+1))^j}{\Gamma(\omega j+1)},\quad j\in\mn$$ and our choice of $\gamma>\omega-\min
(1,\varepsilon_1,\varepsilon_2)$.  In view of \eqref{asy100},
\begin{equation}\label{i3}
\frac{\me \rho_j(n)}{\log n}=\frac{V_j(\log n)}{\log n}~\sim~
c_j(\log n)^{\omega j-1}=o((\log n)^{\gamma+\omega(j-1)}), \quad
n\to\infty.
\end{equation}
The next step is to show that
\begin{equation}\label{inter100}
\me \int_1^\infty x^{-2}(\rho_j(nx)-\rho_j(n)) {\rm d}x=o((\log
n)^{\gamma+\omega(j-1)}),\quad n\to\infty.
\end{equation}
As a preparation for the proof of \eqref{inter100} we first note
that according to \eqref{est_imp}
\begin{eqnarray*}
\me (\rho_j(nx)-\rho_j(n))&=& V_j(\log n+\log x)-V_j(\log
n)\notag\\&\leq& c_j (\1_{\{\omega j\in (0,1]\}}
(\log x)^{\omega j}+\1_{\{\omega j>1\}}\omega j (\log n+\log x)^{\omega j-1}\log
x)\notag\\&+&a_{0,j}+a_{1,j}(\log n+\log x)^{\omega
j-\varepsilon_2}-b_{0,j}+|b_{1,j}|(\log n)^{\omega j-\varepsilon_1}
\end{eqnarray*}
for $n\in\mn$ and $x\geq 1$. Further, using the inequality
$(u+v)^\alpha\leq (2^{\alpha-1}\wedge 1)(u^\alpha+v^\alpha)$ which
holds for $\alpha>0$ and $u,v\geq 0$ yields
$$\int_1^\infty x^{-2}(\log n+\log x)^{\omega j-\varepsilon_2}{\rm
d}x = O((\log n)^{\omega j-\varepsilon_2}) ,\quad n\to\infty$$ and
$$\int_1^\infty x^{-2}(\log n+\log x)^{\omega j-1}{\rm
d}x=O((\log n)^{\omega j-1}) ,\quad n\to\infty,$$ and
\eqref{inter100} follows.

An appeal to \eqref{xyz} enables us to conclude that for large enough
$n$
\begin{eqnarray*}
&&\me (\rho_j(n)-\me \rho_j(y_0 n (\log n)^{-2}))\\&=&V_j(\log
n)-V_j(\log n+\log y_0-2\log\log n)\\&\leq& c_j (\log n)^{\omega
j}\Big(1-\Big(1-\frac{2\log\log n-\log y_0}{\log n}\Big)^{\omega
j}\Big)\\&+&a_{0,j}+a_{1,j}(\log n)^{\omega
j-\varepsilon_2}-b_{0,j}-b_{1,j}(\log n+\log y_0-2\log\log
n)^{\omega j-\varepsilon_1}\\&\leq& 4\omega j c_j (\log n)^{\omega
j-1}\log\log n+a_{0,j}+a_{1,j}(\log n)^{\omega
j-\varepsilon_2}\\&-&b_{0,j}+|b_{1,j}|(\log n+\log y_0-2\log\log
n)^{\omega j-\varepsilon_1}.
\end{eqnarray*}
Hence,
\begin{equation}\label{i1}
\me (\rho_j(n)-\rho_j(y_0 n (\log n)^{-2}))=o((\log
n)^{\gamma+\omega(j-1)}),\quad n\to\infty
\end{equation}
by the same reasoning as above. Finally,
\begin{equation}\label{i2}
\frac{\sup_{s\in [0,1]}(\rho_j(en^s)-\rho_j(e^{-1}n^s))}{(\log
n)^{\gamma+\omega(j-1)}}=\frac{\sup_{s\in [0,1]}(N_j(s\log
n+1)-N_j(s\log n-1))}{(\log n)^{\gamma+\omega(j-1)}}~\tp~ 0,\quad
n\to\infty
\end{equation}
by Corollary \ref{aux1}. Using \eqref{i3}, \eqref{inter100},
\eqref{i1} and \eqref{i2} in combination with Markov's inequality
(applied to the first three terms on the right-hand side of
\eqref{asymp567}) shows that the left-hand side of
\eqref{asymp567} converges to zero in probability as $n\to\infty$.
Now \eqref{uniform_est} follows by another application of Markov's
inequality and the dominated convergence theorem.
\end{proof}

\subsection{Proof of Proposition \ref{limit234}}

We shall use an integral representation which has already appeared
in the proof of Lemma \ref{aux} (c):
\begin{equation}\label{yj12}
Y_j(t)=\sum_{k\in\mn}
V_{j-1}(t-T_k)-V_j(t)=\int_{[0,\,t]}V_{j-1}(t-y){\rm
d}(N_1(y)-V_1(y))
\end{equation}
for $j\geq 2$ and $t\geq 0$.

In view of \eqref{flt_assumption2} Skorokhod's representation
theorem ensures that there exist versions $\widehat{N}_1$ and
$\widehat{W}$ such that
\begin{equation}\label{cs2222}
\lim_{t\to\infty}\sup_{y\in [0,\,T]}\bigg|\frac{\widehat
N_1(ty)-V_1(ty)}{at^\gamma}-\widehat{W}(y)\bigg|=0\quad\text{a.s.}
\end{equation}
for all $T>0$. This implies that \eqref{limit10000} is equivalent
to
\begin{equation}\label{fd222}
\bigg(\widehat{W}(\cdot), \bigg(\frac{\widehat{Z}_j (t,
\cdot)}{c_{j-1}t^{\omega(j-1)}}\bigg)_{j\geq 2}\bigg)~
~\Rightarrow~ (R_j^{(\omega)}(\cdot))_{j\in\mn},\quad t\to\infty,
\end{equation}
where 
we set 
$\widehat{Z}_j(t,x):=\int_{(0,\,x]}\widehat{W}(y){\rm
d}_y(-V_{j-1}(t(x-y))$ for $j\geq 2$ and $t,x\geq
0$. As far as the first coordinate is concerned the equivalence is
an immediate consequence of \eqref{cs2222}. As for the other
coordinates, integration by parts yields, for $s>0$ fixed and
$j\geq 2$
\begin{eqnarray*}
\int_{[0,\,st]}\frac{V_{j-1}(st-y)}{c_{j-1}t^{\omega(j-1)}}{\rm
d}_y\frac{\widehat{N}_1(y)-V_1(y)}{at^\gamma}&=&\int_{(0,\,s]}\bigg(\frac{\widehat{N}_1(ty)-V_1(ty)}{at^\gamma}
-\widehat{W}(y)\bigg){\rm
d}_y\frac{-V_{j-1}(t(s-y))}{c_{j-1}t^{\omega(j-1)}}\\&+&\int_{(0,\,s]}\widehat{W}(y){\rm
d}_y\frac{-V_{j-1}(t(s-y))}{c_{j-1}t^{\omega(j-1)}}.
\end{eqnarray*}
Observe that the first term is a counterpart of \eqref{yj12} in
which $\widehat{N}_1$ replaces $N_1$. Denoting by $L(t)$ the first
term on the right-hand side, we infer
$$|L(t)|\leq \sup_{0\leq y\leq s}
\bigg|\frac{\widehat{N}_1(ty)-V_1(ty)}{at^\gamma}-\widehat{W}(y)\bigg|
\big(\big(c_{j-1} t^{\omega (j-1)}\big)^{-1}V_{j-1}(st)\big)\to 0~~{\rm a.s.~for~} t\to\infty$$
 in view of \eqref{asy100}
which implies that
\begin{equation}\label{aux111}
\lim_{t\to\infty} \big(c_{j-1}t^{\omega
(j-1)}\big)^{-1}V_{j-1}(st)=s^{\omega (j-1)}
\end{equation}
and \eqref{cs2222}.

For $j\geq 2$ and $t,x\geq 0$, set
$Z_j(t,x):=\int_{(0,\,x]}W(y){\rm d}_y(-V_{j-1}(t(x-y))$ and note
that \eqref{fd222} is equivalent to
\begin{equation}\label{fd333}
\bigg(W(\cdot), \bigg(\frac{Z_j(t,\cdot)}{c_{j-1}t^{\omega
(j-1)}}\bigg)_{j\geq 2}\bigg)~\Rightarrow~
(R_j^{(\omega)}(\cdot))_{j\in\mn},\quad t\to\infty
\end{equation}
because the left-hand sides of \eqref{fd222} and \eqref{fd333}
have the same distribution.

It remains to check two properties:

\noindent (a) weak convergence of finite-dimensional
distributions, i.e. that for all $n\in\mn$, all $0\leq
s_1<s_2<\ldots<s_n<\infty$ and all integer
$\ell\geq 2$
\begin{equation}\label{fd111}
\bigg(W(s_i), \bigg(\frac{Z_j (t,s_i)}{c_{j-1}t^{\omega
(j-1)}}\bigg)_{2\leq j\leq \ell}\bigg)_{1\leq i\leq
n}~\overset{{\rm d}}{\to}~ (R^{(\omega)}_j(s_i))_{1\leq j\leq
\ell,\,1\leq i\leq n}
\end{equation}
as $t\to\infty$;

\noindent (b) tightness of the distributions of coordinates in
\eqref{fd333}, excluding the first one.

\noindent {\sc Proof of \eqref{fd111}}. If $s_1=0$, we have
$W(s_1)=Z_j(t,s_1)=R_k^{(\omega)}(s_1)=0$ a.s.\ for $j\geq 2$ and
$k\in\mn$. Hence, in what follows we consider the case $s_1>0$.
Both the limit and the converging vectors in \eqref{fd111} are
Gaussian. In view of this it suffices to prove that
\begin{eqnarray}\label{cova222}
&&\lim_{t\to\infty} t^{-\omega (k+j-2)} \E
[Z_k(t,s)Z_j(t,u)]=c_{k-1}c_{j-1}\E
[R^{(\omega)}_k(s)R^{(\omega)}_j(u)]\\&=&
\begin{cases}
        c_{k-1}c_{j-1}\int_0^s\int_0^u r(s-y, u-z){\rm
d}y^{\omega(k-1)}{\rm d}z^{\omega(j-1)}, & \text{if} \ k,j\geq 2,   \\
        c_{j-1}\int_0^u r(s, u-z){\rm d}z^{\omega(j-1)}, & \text{if} \ k=1, j\geq 2
\end{cases}
\notag
\end{eqnarray}
for $k,j\in\mn$, $k+j\geq 3$ and $s,u>0$, where we set
$Z_1(t,\cdot)=W(\cdot)$ and $r(x,y):=\me [W(x)W(y)]$ for $x,y\geq
0$. We only consider the case where $k,j\geq 2$, the complementary
case being similar and simpler.

To prove \eqref{cova222} we need some preparation. For each $t>0$
denote by $\theta_{k,t}$ and $\theta_{j,t}$ independent random
variables with the distribution functions $\mmp\{\theta_{k,t}\leq
y\}=V_{k-1}(ty)/V_{k-1}(ts)$ on $[0,s]$ and
$\mmp\{\theta_{j,t}\leq y\}=V_{j-1}(ty)/V_{j-1}(tu)$ on $[0,u]$,
respectively. Further, let $\theta_k$ and $\theta_j$ denote
independent random variables with the distribution functions
$\mmp\{\theta_k\leq y\}=(y/s)^{\omega(k-1)}$ on $[0,s]$ and
$\mmp\{\theta_j\leq y\}=(y/u)^{\omega(j-1)}$ on $[0,u]$,
respectively. According to \eqref{asy100}, $(\theta_{k,t},
\theta_{j,t})\dod (\theta_k, \theta_j)$ as $t\to\infty$. Now
observe that the function $r(x,y)=\me [W(x)W(y)]$ is continuous,
hence bounded, on $[0,T]\times [0,T]$ for every $T>0$. This
follows from the assumed a.s.\ continuity of $W$, the dominated
convergence theorem in combination with $\me [\sup_{z\in
[0,\,T]}W(z)]^2<\infty$ for every $T>0$ (for the latter, see
Theorem 3.2 on p.~63 in \cite{Adler:1990}). As a result,
$r(s-\theta_{k,t}, u-\theta_{j,t})\dod r(s-\theta_k, u-\theta_j)$
as $t\to\infty$ and thereupon $$\lim_{t\to\infty} \me
r(s-\theta_{k,t}, u-\theta_{j,t})=\me r(s-\theta_k, u- \theta_j)$$
by the dominated convergence theorem.

This together with \eqref{aux111} leads to formula
\eqref{cova222}:
\begin{eqnarray*}
&&\me
[t^{-\omega(k+j-2)}Z_k(t,s)Z_j(t,u)]\\&=&\frac{V_{k-1}(ts)}{t^{\omega(k-1)}}\frac{V_{j-1}(tu)}{t^{\omega(j-1)}}
\int_0^s\int_0^u r(s-y, u-z){\rm
d}_y\Big(\frac{V_{k-1}(ty)}{V_{k-1}(ts)}\Big){\rm
d}_z\Big(\frac{V_{j-1}(tz)}{V_{j-1}(tu)}\Big)\\&=&\frac{V_{k-1}(ts)}{t^{\omega(k-1)}}\frac{V_{j-1}(tu)}{t^{\omega(j-1)}}\me
r(s-\theta_{k,t}, u-\theta_{j,t})~\to~c_{k-1}s^{k-1}c_{j-1}s^{j-1}
\me r(s-\theta_k, u-\theta_j)\\&=&c_{k-1}c_{j-1}\int_0^s\int_0^u
r(s-y, u-z){\rm d}y^{\omega(k-1)}{\rm d}z^{\omega(j-1)}
\end{eqnarray*}
as $t\to\infty$.

\noindent {\sc Proof of tightness}. Choose $j\geq 2$. We intend to
prove tightness of $(t^{-\omega(j-1)}Z_j (t,u))_{u\geq 0}$ on
$D[0,T]$ for all $T>0$. Since the function $t\mapsto
t^{-\omega(j-1)}$ is regularly varying at $\infty$ it is enough to
investigate the case $T=1$ only. By Theorem 15.5 in
\cite{Billingsley:1968} it suffices to show that for any
$\kappa_1>0$ and $\kappa_2>0$ there exist $t_0>0$ and $\delta>0$
such that
\begin{equation}\label{tight222}
\mmp\big\{\sup_{0\leq u,v\leq 1, |u-v|\leq \delta}|Z_j(t,u)-Z_j(t,
v)|>\kappa_1 t^{-\omega(j-1)}\big\}\leq \kappa_2
\end{equation}
for all $t\geq t_0$. We only analyze the case where $0\leq v<u\leq
1$, the complementary case being analogous.

Set $W(x)=0$ for $x<0$. The basic observation for the subsequent
proof is that \eqref{sam22} extends to
\begin{equation}\label{sam23}
|W(x)-W(y)|\leq M_T|x-y|^\beta
\end{equation}
whenever $-\infty<x,y\leq T$ for the same positive random variable
$M_T$ as in \eqref{sam22}. This is trivial when $x\vee y\leq 0$
and a consequence of \eqref{sam22} when $x\wedge y\geq 0$. Assume
that $x\wedge y\leq 0<x\vee y$. Then $|W(x)-W(y)|=|W(x\vee y)|\leq
M_T (x\vee y)^\beta\leq M_T|x-y|^\beta$, where the first
inequality follows from \eqref{sam22} with $y=0$.

Let $0\leq v<u\leq 1$ and $u-v\leq \delta$ for some $\delta\in
(0,1]$. Using \eqref{sam23} and \eqref{asy100} we obtain
\begin{eqnarray*}
t^{-\omega(j-1)}|Z_j(t,
u)-Z_j(t,v)|&=&t^{-\omega(j-1)}\bigg|\int_{[0,\, u)}
\big(W(u-y)-W(v-y)\big){\rm d}V_{j-1}(ty)\bigg|\\
\\&\leq& M_1(u-v)^\beta (t^{-\omega(j-1)}V_{j-1}(t))\leq
M_1\delta^\beta \lambda
\end{eqnarray*}
for large enough $t$ and a positive constant $\lambda$. This
proves \eqref{tight222}.

\subsection{Proof of Proposition \ref{zero}}

Relation \eqref{inter2023} will be proved by induction in two
steps.

\noindent {\sc Step 1}. 
Assume that \eqref{inter2023} holds for $j=2,\ldots, k$. We claim
that then
\begin{equation}\label{relation_main100}
\bigg(\frac{N_j(t\cdot)-V_j(t\cdot)}{ac_{j-1}t^{\gamma+\omega(j-1)}}\bigg)_{j=1,\ldots,k}~\Rightarrow~(R^{(\omega)}_j(\cdot))_{j=1,\ldots
k}
\end{equation}
in the $J_1$-topology on $D^k$. Indeed, in view of \eqref{decomp}
and the induction hypothesis relation \eqref{relation_main100} is
equivalent to
\begin{equation}\label{limit100}
\bigg(\frac{N_1(t \cdot)-V_1(t \cdot)}{at^\gamma}, \bigg(
\frac{Y_j(t\cdot)}{ac_{j-1}t^{\gamma+\omega(j-1)}}\bigg)_{j=2,\ldots,
k}\bigg)~\Rightarrow~ (R^{(\omega)}_j(\cdot))_{j=1,\ldots,
k},\quad t\to\infty.
\end{equation}
The latter holds by Proposition \ref{limit234}.

\noindent {\sc Step 2}. Using
\begin{equation}\label{relation_main101}
\frac{N_k(t\cdot)-V_k(t\cdot)}{ac_{k-1}t^{\gamma+\omega(k-1)}}~\Rightarrow~R^{(\omega)}_k(\cdot),\quad
t\to\infty
\end{equation}
in the $J_1$-topology on $D$ which is a consequence of
\eqref{relation_main100} we shall prove that \eqref{inter2023}
holds with $j=k+1$.

In view of \eqref{relation_main101} and the fact that
$R^{(\omega)}_k$ is a.s.\ continuous Skorokhod's representation
theorem ensures that there exists a probability space which is
rich enough to accomodate

\begin{itemize}

\item independent random processes $\widehat{N}^{(1)}_k$,
$\widehat{N}^{(2)}_k,\ldots$ which are versions of $N_k$;

\item independent random processes $\widehat{R}^{(\omega, 1)}_k$,
$\widehat{R}^{(\omega, 2)}_k,\ldots$ which are versions of
$R^{(\omega)}_k$;

\item random variables $\widehat T_1$, $\widehat
T_2,\ldots$ which are versions of $T_1$, $T_2,\ldots$ independent
of $(\widehat{N}^{(1)}_k, \widehat{R}_k^{(\omega, 1)})$,
$(\widehat{N}^{(2)}_k,\widehat{R}_k^{(\omega, 2)}),\ldots$

\end{itemize}

Furthermore,
\begin{equation}\label{cs22222}
\lim_{t\to\infty}\sup_{y\in [0,\,T]}\bigg|\frac{\widehat
N^{(r)}_k(ty)-V_k(ty)}{ac_{k-1}t^{\gamma+\omega(k-1)}}-\widehat{R}_k^{(\omega,
r)}(y)\bigg|=0\quad\text{a.s.}
\end{equation}
for all $T>0$ and $r\in\mn$.

Set $$\widehat X_{k+1}(t):=\sum_{r\in\mn}\big(\widehat
N^{(r)}_k(t-\widehat T_r)-V_k(t-\widehat T_r)\big)\1_{\{\widehat
T_r\leq t\}},\quad t\geq 0.$$ The process $(\widehat
X_{k+1}(t))_{t\geq 0}$ has the same distribution as
$(X_{k+1}(t))_{t\geq 0}$. Therefore, \eqref{inter2023} with
$j=k+1$ is equivalent to
\begin{equation}\label{prw_to_zero_as1111}
t^{-(\gamma+\omega k)}\sup_{y\in [0,\,T]}\widehat
X_{k+1}(ty)~\tp~0,\quad t\to\infty.
\end{equation}

To prove this, write
\begin{eqnarray*}
\frac{\widehat X_{k+1}(ty)}{ac_{k-1}t^{\gamma+\omega
k}}&=&t^{-\omega}\sum_{r\in\mn} \Big(\frac{\widehat
N^{(r)}_k(ty-\widehat T_r)-V_k(ty-\widehat
T_r)}{ac_{k-1}t^{\gamma+\omega(k-1)}}-\widehat R_k^{(\omega,
r)}(y-t^{-1}\widehat T_r)\Big)\1_{\{\widehat T_r\leq
ty\}}\\&+&t^{-\omega} \sum_{r\in\mn}\widehat R_k^{(\omega,
r)}(y-t^{-1}\widehat T_r)\1_{\{\widehat T_r\leq
ty\}}=:t^{-\omega}(\widehat Z_1(t,y)+\widehat Z_2(t,y)).
\end{eqnarray*}
For all $T>0$,
\begin{equation}\label{xxy}
t^{-\omega}\sup_{y\in [0,\,T]}|\widehat Z_1(t,y)|
\leq \sum_{r\in\mn}\sup_{y\in [0,\,T]}\Big|\frac{\widehat
N^{(r)}_k(ty)-V_k(ty)}{ac_{k-1}t^{\gamma+\omega(k-1)}}-\widehat
R_k^{(\omega, r)}(y)\Big|\1_{\{\widehat T_r\leq tT\}}.
\end{equation}
For $r\in\mn$, the random variables $\eta_r(t):=\sup_{y\in
[0,\,T]}\Big|\frac{\widehat
N^{(r)}_k(ty)-V_k(ty)}{ac_{k-1}t^{\gamma+\omega(k-1)}}-\widehat
R_k^{(\omega, r)}(y)\Big|$ are i.i.d.\ and independent of
$\widehat T_1$, $\widehat T_2,\ldots$. Furthermore,
\begin{equation}\label{ineq12}
\me [\eta_1(t)]^2\leq 2\Big(\frac{\me [\sup_{s\in
[0,\,Tt]}(N_k(s)-V_k(s))]^2}{(ac_{k-1}t^{\gamma+\omega(k-1)})^2}+\me
[\sup_{s\in [0,\,T]}R_k^{(\omega)}(s)]^2 \Big)=O(1),\quad
t\to\infty
\end{equation}
in view of \eqref{variest} and the well-known fact that the
supremum over $[0,T]$ of any a.s.\ continuous Gaussian process has
exponential tails. Since $\lim_{t\to\infty}\eta_1(t)=0$ a.s.,
inequality \eqref{ineq12} ensures that
$\lim_{t\to\infty}\E\eta_1(t)=0$. The right-hand side in
\eqref{xxy} is dominated by
$$t^{-\omega}\sum_{r\in\mn}(\eta_r(t)-\E\eta_r(t))\1_{\{\widehat T_r\leq t\}}+ t^{-\omega}\widehat
N(t)\me \eta_1(t),$$ where $\widehat{N}(t):=\#\{r\in\mn: \widehat
T_r\leq t\}$. Using the last limit relation and \eqref{slln22} we
conclude that the second summand converges to $0$ a.s., as
$t\to\infty$. The first summand converges to zero in probability,
as $t\to\infty$, by Markov's inequality in combination with
$t^{-2\omega}\me
\Big(\sum_{r\in\mn}(\eta_r(t)-\E\eta_r(t))\1_{\{\widehat T_r\leq
t\}}\Big)^2=t^{-2\omega}V(t) {\rm Var}\,\eta_1(t)=O(t^{-\omega})$.
Thus, for all $T>0$, $t^{-\omega}\sup_{y\in [0,\,T]}|\widehat Z_1(t,y)|\tp 0$ as $t\to\infty$.  


The process $(\widehat Z_2(t,y))$ has the same distribution as the
process $(Z_2(t,y))$ in which the random variables involved do not
carry the hats. Thus, it suffices to prove that
\begin{equation}\label{inter21}
t^{-\omega}\sup_{y\in [0,\,T]}|Z_2(t,y)|\tp 0,\quad t\to\infty.
\end{equation}
In what follows we write $\me_{(T_r)}(\cdot)$ for $\me
(\cdot|(T_r))$ and $\mmp_{(T_r)}(\cdot)$ for $\mmp (\cdot|(T_r))$.
Since $\me_{(T_r)} [Z_2(t,y)]^2=\me
[R_k^{(\omega)}(1)]^2\int_{[0,\,ty]}(y-t^{-1}x)^{2(\gamma+\omega(k-1))}{\rm
d}N_1(x)\leq \me [R_k^{(\omega)}(1)]^2
y^{2(\gamma+\omega(k-1))}N_1(ty)$. Using the Cram\'{e}r-Wold
device and Markov's inequality in combination with \eqref{slln22}
we infer that, given $(T_r)$, with probability one
finite-dimensional distributions of $(t^{-\omega}Z_2(t,y))_{y\geq
0}$ converge weakly to the zero vector, as $t\to\infty$. Thus,
\eqref{inter21} follows if we can show that the family of
$\mmp_{(T_r)}$-distributions of $(t^{-\omega} Z_2(t,y))_{y\geq 0}$
is tight. As a preparation, we observe that the process
$R_k^{(\omega)}$ inherits the local H\"{o}lder continuity of $W$.
Indeed, recalling \eqref{sam23} we obtain, for $x,y\in[0,T]$ and
$k\geq 2$,
\begin{equation}\label{hol}
|R_k^{(\omega)}(x)-R_k^{(\omega)}(y)|\leq \omega
(k-1)\int_0^{x\vee y}|W(x-z)-W(y-z)|z^{\omega(k-1)-1}{\rm d}z\leq
M_T T^{\omega(k-1)}|x-y|^\beta~\text{a.s.}
\end{equation}
It is also important that the random variable $M_T$ has finite
moments of all positive orders, see Theorem 1 in \cite{Azmoodeh
etal:2014}. Pick now integer $n\geq 2$ such that $2n\beta>1$. By
Rosenthal's inequality (Theorem 3 in \cite{Rosenthal:1970}), for
$x,y\in [0,\,T]$ and a positive constant $C$,
\begin{eqnarray*}
\me_{(T_r)} (Z_2(t,x)-Z_2(t,y))^{2n}&\leq&
C\Big(\Big(\sum_{r\in\mn}\me_{(T_r)}(R_k^{(\omega,
r)}(x-t^{-1}T_r)-R_k^{(\omega, r)}(y-t^{-1}T_r))^2 \1_{\{T_r\leq
tT\}}\Big)^n\\&+& \sum_{r\in\mn}\me_{(T_r)}(R_k^{(\omega,
r)}(x-t^{-1}T_r)-R_k^{(\omega, r)}(y-t^{-1}T_r))^{2n}
\1_{\{T_r\leq tT\}}\Big)\\&\leq& 2C T^{2n\omega(k-1)} \me M_T^{2n}
|x-y|^{2n\beta}N_1(tT)^n
\end{eqnarray*}
having utilized \eqref{hol} for the second inequality. In view of
\eqref{slln22}, this entails that a classical sufficient condition
for tightness (formula (12.51) on p.~95 in
\cite{Billingsley:1968}) holds
$$t^{-2n\omega}\me_{(T_r)} (Z_2(t,x)-Z_2(t,y))^{2n}\leq
C_1(x-y)^{2n\beta}$$ for a positive constant $C_1$ and large
enough $t$. Thus, we have proved that \eqref{inter21} holds
conditionally on $(T_r)$, hence, also unconditionally. The proof
of Proposition \ref{zero} is complete.

\section{The case of homogeneous residual allocation model}


In this section we apply Theorem \ref{main0} to the case of fragmentation law given by
homogeneous residual allocation model \eqref{BS}. Let $B:=(B(s))_{s\geq 0}$ be a standard Brownian
motion (BM) and for $q\geq 0$ let
$$B_q(s):=\int_{[0,\,s]}(s-y)^q {\rm d}B(y), ~~s\geq 0.$$
The process $B_q:=(B_q(s))_{s\geq 0}$ is a centered Gaussian
process called the fractionally integrated BM or the
Riemann-Liouville process. Clearly $B=B_0$, and for $q\in\mn$ the
process can be obtained as a repeated integral of the BM. It is
known that $B_q$ is locally H{\"o}lder continuous with any
exponent $\beta<q+1/2$ \cite{Hu+Nualart+Song}.
\begin{thm}\label{main}
Let $(P_k)_{k\in\mn}$ be given by \eqref{BS} with iid $U_i$'s such that
$$\mu:=\me|\log U_1|<\infty, ~\sigma^2:={\rm Var}( \log U_1)\in (0,\infty)$$
and $\me |\log (1-U_1)|<\infty$. Then
$$\bigg(\frac{(j-1)!(K_{n,j}(\cdot)-(j!)^{-1}(\mu^{-1}\log n(\cdot))^j)}{\sqrt{\sigma^2\mu^{-2j-1}(\log n)^{2j-1}}}\bigg)_{j\in\mn}~\Rightarrow~
(B_{j-1}(\cdot))_{j\in\mn},\quad n\to\infty$$ in the product
$J_1$-topology on $D[0,1]^\mn$.
\end{thm}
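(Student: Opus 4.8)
The plan is to obtain Theorem~\ref{main} from Theorem~\ref{main0} by verifying hypotheses~(i)--(iii) for the fragmentation law \eqref{BS}, with the identifications $\omega=1$, $c=1/\mu$, $\gamma=1/2$, $a=\sigma\mu^{-3/2}$ and $W=B$ (standard Brownian motion). The starting observation is that, writing $\xi_i:=-\log U_i$ (iid with mean $\mu$ and variance $\sigma^2$) and $\eta_k:=-\log(1-U_k)$, one has
$$T_k=-\log P_k=S_{k-1}+\eta_k,\qquad S_0:=0,\quad S_n:=\xi_1+\dots+\xi_n,$$
so $(T_k)_{k\in\mn}$ is a perturbed random walk (note $\eta_k$ is independent of $S_{k-1}$ but not of $\xi_k$) and $N(t)=\#\{k\in\mn:T_k\le t\}$ is its renewal counting process. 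Since $B$ is a.s.\ locally H\"older continuous with every exponent $\beta<1/2$ and $B(0)=0$, the standing regularity assumption on $W$ made before Theorem~\ref{main0} holds.

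For~(i) I would condition on $S_{k-1}$ to get $V(t)=\sum_{m\ge0}\mmp\{S_m+\eta_{m+1}\le t\}=\int_{[0,\,t]}\mathcal U(t-y)\,{\rm d}G(y)$, where $G$ is the law of $\eta_1$ and $\mathcal U(x):=\sum_{m\ge0}\mmp\{S_m\le x\}$ is the renewal function of $(S_m)$. Since $\sigma^2<\infty$ one has $\mathcal U(x)-x/\mu$ bounded on $[0,\infty)$ (Lorden's bound on the overshoot), and since $\me\eta_1=\me|\log(1-U_1)|<\infty$ a direct estimate of this convolution gives that $V(t)-t/\mu$ is bounded on $[0,\infty)$ as well (and convergent as $t\to\infty$ in the non-lattice case). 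Hence \eqref{1} holds with $\omega=1$, $c=1/\mu$ and $\varepsilon_1=\varepsilon_2=1$, for which the admissible range $\gamma\in(\omega-\min(1,\varepsilon_1,\varepsilon_2),\omega)=(0,1)$ contains $1/2$.

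Conditions~(ii) and~(iii) are exactly the known finite-variance behaviour of the renewal counting process of a perturbed random walk, which I would import from \cite{Alsmeyer+Iksanov+Marynych:2017,Iksanov:2016}: the maximal second-moment estimate $\me\sup_{s\in[0,\,t]}(N(s)-V(s))^2=O(t)$, i.e.\ \eqref{vari} with $\gamma=1/2$; and the functional central limit theorem
$$\frac{N(t\,\cdot)-t\,\cdot/\mu}{\sigma\mu^{-3/2}\sqrt t}~\Rightarrow~B(\cdot),\qquad t\to\infty,$$
in the $J_1$-topology on $D$, i.e.\ \eqref{flt_assumption} with $\gamma=1/2$, $a=\sigma\mu^{-3/2}$ and $W=B$; the latter is an inversion of Donsker's theorem for $(S_n)$, the perturbation term being asymptotically negligible because $\me\eta_1<\infty$. (The case $j=1$ of the present theorem is precisely the Bernoulli-sieve functional limit theorem of \cite{Alsmeyer+Iksanov+Marynych:2017}.)

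Granting (i)--(iii), Theorem~\ref{main0} applies. With $\omega=1$ one has $\Gamma(\omega+1)=1$ and $\Gamma(\omega j+1)=j!$, so $c_j=\mu^{-j}/j!$; thus the centering $c_j(\log n\,(\cdot))^{\omega j}$ equals $(j!)^{-1}(\mu^{-1}\log n\,(\cdot))^{j}$ and the scaling $ac_{j-1}(\log n)^{\gamma+\omega(j-1)}$ equals
$$\sigma\,\mu^{-j-1/2}(\log n)^{j-1/2}/(j-1)!=(j-1)!^{-1}\sqrt{\sigma^2\mu^{-2j-1}(\log n)^{2j-1}},$$
which matches the normalisation in the statement. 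Finally, for $\omega=1$ the limit processes are $R^{(1)}_1(s)=W(s)=B_0(s)$ and $R^{(1)}_j(s)=\int_{[0,\,s]}(s-y)^{j-1}{\rm d}W(y)=B_{j-1}(s)$ for $j\ge2$, so $(R^{(\omega)}_j)_{j\in\mn}=(B_{j-1})_{j\in\mn}$, which is exactly the asserted convergence. Essentially all the content sits in conditions~(ii) and~(iii) — the uniform-in-$s$ variance bound and the functional CLT for the perturbed-random-walk counting process under these precise moment hypotheses; the remaining steps are the identification of the constants and of the limit.
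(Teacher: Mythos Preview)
Your proposal is correct and follows the same route as the paper: recognise $(T_k)$ as a perturbed random walk, verify hypotheses (i)--(iii) of Theorem~\ref{main0} with $\omega=\varepsilon_1=\varepsilon_2=1$, $c=1/\mu$, $\gamma=1/2$, $a=\sigma\mu^{-3/2}$, $W=B$, and then read off the constants and limit processes exactly as you did. The one point on which the paper differs is that it does not cite the maximal second-moment bound~\eqref{vari} but proves it from scratch (this is the substance of Lemma~\ref{appl_BS}(b), about a page of work); that estimate in the uniform-in-$s$ form required here is not quite off-the-shelf in \cite{Alsmeyer+Iksanov+Marynych:2017,Iksanov:2016}, so be prepared to supply the argument rather than import it.
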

\begin{proof}
Let $(\xi_k, \eta_k)_{k\in\mn}$ be independent copies of a random
vector $(\xi, \eta)$ with positive arbitrarily dependent
components. Denote by $(S_k)_{k\in\mn_0}$ the zero-delayed ordinary random walk with increments $\xi_k$, that
is, $S_0:=0$ and $S_k:=\xi_1+\ldots+\xi_k$ for $k\in\mn$. Consider
a {\it perturbed} random walk
\begin{equation}\label{perturbed}
\tilde T_k:=S_{k-1}+\eta_k,\quad k\in \mn
\end{equation}
and then define $\tilde N(t):=\#\{k\in\mn:
\tilde T_k\leq t\}$ and
$\tilde V(t):=\me \tilde N(t)$ for $t\geq 0$. It is clear that
\begin{equation}\label{equ}
\tilde V(t)=\me U((t-\eta)^+)=\int_{[0,\,t]}U(t-y){\rm d}\tilde
G(y),\quad t\geq 0
\end{equation}
where, for $t\geq 0$, $U(t):=\sum_{k\geq 0}\mmp\{S_k\leq t\}$ is
the renewal function and $\tilde G(t)=\mmp\{\eta\leq t\}$.

For  $P_k$ written as \eqref{BS},
$T_k=-\log P_k$ becomes
$$T_k=|\log U_1|+\ldots +|\log U_{k-1}|+|\log (1-U_k)|,\quad
k\in\mn$$ which is a particular case of \eqref{perturbed} with
$(\xi, \eta)=(|\log U_1|, |\log (1-U_1)|)$. In view of this and
Lemma \ref{appl_BS} given below, the conditions of Theorem
\ref{main0} hold with $\omega=\varepsilon_1=\varepsilon_2=1$,
$\gamma=1/2$, $c=\mu^{-1}$, $W=B$ and $R_j=B_{j-1}$.
\end{proof}

\begin{lemma}\label{appl_BS}
Assume that ${\tt m}:=\me\xi<\infty$, ${\tt s}^2:={\rm Var}\,
\xi\in (0,\infty)$ and $\me\eta<\infty$. Then
\begin{itemize}
\item[\rm(a) ]
\begin{equation}\label{est2}
b_1 \leq \tilde V(t)-{\tt m}^{-1}t\leq a_0,\quad t\geq 0
\end{equation}
for some constants $b_1<0$ and $a_0>0$.
Also,
\begin{equation*}
\frac{\tilde N(t\cdot)-{\tt m}^{-1}(t\cdot)}{({\tt s}^2 {\tt
m}^{-3} t)^{1/2}}~\Rightarrow~ B(\cdot),\quad t\to\infty
\end{equation*}
in the $J_1$-topology on $D$.
\item[\rm (b)] $\me \sup_{s\in [0,\,t]}(\tilde N(s)-\tilde V(s))^2=O(t)$ as $t\to\infty$.  
\end{itemize}
\end{lemma}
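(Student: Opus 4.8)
The plan is to reduce everything to classical renewal theory for the zero-delayed random walk $(S_k)_{k\in\mn_0}$ (recall that $\xi>0$ and $\eta>0$ a.s.). Write $N_0(t):=\#\{k\in\mn_0:S_k\leq t\}$ for the ordinary renewal counting function, with $N_0(t):=0$ for $t<0$, and $U(t):=\me N_0(t)=\sum_{k\geq 0}\mmp\{S_k\leq t\}$ for the renewal function, so that \eqref{equ} reads $\tilde V(t)=\int_{[0,\,t]}U(t-y){\rm d}\tilde G(y)$. Three standard renewal facts will be used, all valid because ${\tt s}^2<\infty$ (see e.g.\ \cite{Iksanov:2016}): (i) ${\tt m}^{-1}t\leq U(t)\leq {\tt m}^{-1}t+C_0$ for all $t\geq 0$ and some $C_0>0$ (lower bound by Wald's identity for the first-passage level $N_0(t)$, upper bound by Lorden's inequality); (ii) the renewal invariance principle $({\tt s}^2{\tt m}^{-3}t)^{-1/2}(N_0(t\cdot)-{\tt m}^{-1}(t\cdot))\Rightarrow B(\cdot)$, coming from Donsker's theorem for $(S_k)$ by inversion and holding in the uniform, hence $J_1$, topology because $B$ is continuous; (iii) $\me\sup_{s\in[0,\,t]}(N_0(s)-U(s))^2=O(t)$, obtained from $N_0(s)-{\tt m}^{-1}s={\tt m}^{-1}(S_{N_0(s)}-s)-{\tt m}^{-1}\sum_{i=1}^{N_0(s)}(\xi_i-{\tt m})$ by bounding the overshoot via $\sup_{s\leq t}(S_{N_0(s)}-s)\leq\max_{k\leq N_0(t)}\xi_k$ (so $\me\max_{k\leq N_0(t)}\xi_k^2\leq\me\xi^2\,\me N_0(t)=O(t)$ by Wald) and the martingale part via Doob's $L^2$-inequality together with $\me(S_{N_0(t)}-{\tt m}N_0(t))^2={\tt s}^2\,\me N_0(t)=O(t)$.

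For part (a), plugging (i) into $\tilde V(t)=\int_{[0,\,t]}U(t-y){\rm d}\tilde G(y)$ gives at once $\tilde V(t)\leq {\tt m}^{-1}t+C_0=:{\tt m}^{-1}t+a_0$, while the lower bound in (i), combined with $\int_{[0,\,t]}y\,{\rm d}\tilde G(y)\leq\me\eta$ and $t(1-\tilde G(t))=t\,\mmp\{\eta>t\}\leq\me\eta$, yields $\tilde V(t)\geq {\tt m}^{-1}t-2{\tt m}^{-1}\me\eta=:{\tt m}^{-1}t+b_1$ with $b_1<0$; this is \eqref{est2}. By (ii) and the continuity of $B$, the functional central limit theorem in part (a) follows as soon as $t^{-1/2}\sup_{s\in[0,\,Tt]}|N_0(s)-\tilde N(s)|\to 0$ in probability for every $T>0$; and for part (b), writing $\tilde N(s)-\tilde V(s)=(\tilde N(s)-N_0(s))+(N_0(s)-U(s))+(U(s)-\tilde V(s))$ — in which the last term is $O(1)$ by (i) and \eqref{est2} and the middle one is $O(t)$ by (iii) — it remains to show $\me\sup_{s\in[0,\,t]}(N_0(s)-\tilde N(s))^2=O(t)$. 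Both of these are delivered by the estimate of the difference process below, which I expect to be the main obstacle, precisely because only $\me\eta<\infty$ is available.

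Since $\tilde N(s)=\#\{j\in\mn_0:S_j+\eta_{j+1}\leq s\}$ and $\eta_{j+1}>0$ a.s., one has the pointwise identity $N_0(s)-\tilde N(s)=D(s):=\#\{j\in\mn_0:S_j\leq s<S_j+\eta_{j+1}\}\geq 0$. Fix a level $L>0$ and split a contributing index $j$ according to whether $\eta_{j+1}>L$ (then discard it) or $\eta_{j+1}\leq L$ (then necessarily $S_j\in(s-L,\,s]$); this gives, for $s\leq Tt$,
\begin{equation*}
D(s)\leq D_1+D_2,\qquad D_1:=\#\{j\in\mn_0:S_j\leq Tt,\ \eta_{j+1}>L\},\qquad D_2:=\sup_{u\in[0,\,Tt]}\big(N_0(u)-N_0(u-L)\big),
\end{equation*}
so $\me\sup_{s\leq Tt}D(s)^2\leq 2\me D_1^2+2\me D_2^2$. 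Since $S_j$ depends only on $\xi_1,\dots,\xi_j$ while $\eta_{j+1}$ is an independent copy of $\eta$, one has $\me D_1=U(Tt)\,\mmp\{\eta>L\}$ and, by a short second-moment computation using $\sum_{l\geq1}l\,\mmp\{S_l\leq x\}=O(x^2)$ (which follows from $\me N_0(x)^2=O(x^2)$), also $\me D_1^2\leq C\big(t\,\mmp\{\eta>L\}+t^2\mmp\{\eta>L\}^2\big)$. For $D_2$, if some interval of length $L$ contains at least $m$ of the points $S_l$ then $\xi_{l+1}+\dots+\xi_{l+m-1}<L$ for some $l\leq N_0(Tt)$, whence $\mmp\{D_2\geq m\}\leq U(Tt)\,\mmp\{S_{m-1}<L\}$; truncating each $\xi_i$ at a constant $K$ with $\me(\xi\wedge K)>{\tt m}/2$ and applying Hoeffding's inequality gives $\mmp\{S_{m-1}<L\}\leq e^{-c_1 m}$ for all $m\geq 4L/{\tt m}$, with $c_1>0$ not depending on $L$, so $\me D_2^2\leq C L^2+U(Tt)\sum_{m\geq 4L/{\tt m}}2m\,e^{-c_1 m}$. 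Finally choose $L=L_t$ to be the crossing point of the non-increasing map $L\mapsto\mmp\{\eta>L\}$ and the increasing map $L\mapsto L/t$, i.e.\ $\mmp\{\eta>L_t\}=L_t/t$; then $L_t^2/t=L_t\,\mmp\{\eta>L_t\}\to 0$, because $L\,\mmp\{\eta>L\}\to 0$ as $L\to\infty$ (this is where $\me\eta<\infty$ enters), so that $L_t=o(\sqrt t)$, and substituting back yields $\me D_1^2=o(t)$, $\me D_2^2=O(t)$, hence $\me\sup_{s\in[0,\,Tt]}D(s)^2=O(t)$, while $\me D_1=U(Tt)L_t/t=o(\sqrt t)$ together with $\mmp\{D_2\geq\delta\sqrt t\}\leq U(Tt)\,\mmp\{S_{\lceil\delta\sqrt t\rceil-1}<L_t\}\to 0$ for every $\delta>0$ gives $t^{-1/2}\sup_{s\in[0,\,Tt]}D(s)\to 0$ in probability. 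This establishes both reductions of the previous paragraph and completes the proof.
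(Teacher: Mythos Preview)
Your argument is correct. For part (a) the bounds on $\tilde V$ match the paper's almost verbatim; for the functional CLT the paper simply cites \cite{Alsmeyer+Iksanov+Marynych:2017}, whereas you supply a self-contained derivation by transferring the renewal invariance principle for $N_0$ through the estimate $t^{-1/2}\sup_{s\le Tt}|N_0(s)-\tilde N(s)|\tp 0$.

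For part (b) your route is genuinely different. The paper decomposes
\[
\tilde N(t)-\tilde V(t)=\sum_{r\ge 0}\big(\1_{\{S_r+\eta_{r+1}\le t\}}-\tilde G(t-S_r)\big)+\int_{[0,t]}\tilde G(t-x)\,{\rm d}(\nu(x)-U(x)),
\]
i.e.\ it first centers conditionally on the walk and then handles the renewal fluctuation; the first piece is controlled by discretising in time and using direct Riemann integrability of $1-\tilde G$, the second reduces (as does your middle term) to $\me\sup_{s\le t}(\nu(s)-U(s))^2=O(t)$. You instead write $\tilde N-\tilde V=(\tilde N-N_0)+(N_0-U)+(U-\tilde V)$ and control $D=N_0-\tilde N$ directly by the threshold split $D\le D_1+D_2$ and a Hoeffding bound. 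Your approach avoids the direct Riemann integrability machinery and, as a bonus, yields the $o(\sqrt t)$ needed for the FCLT with the same computation; the paper's decomposition, on the other hand, separates the genuinely random $\eta$-fluctuations from the walk fluctuations more cleanly and would be easier to push to higher moments.

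Two minor points of exposition. First, the ``short second-moment computation'' for $D_1$ is not entirely innocent: since $\eta_{j+1}$ may depend on $\xi_{j+1}$, the cross terms $\mmp\{S_l\le Tt,\eta_{j+1}>L\}$ for $j<l$ are not a straight product; one has to drop $\xi_{j+1}$ from $S_l$ (using $\xi_{j+1}>0$) before factoring, which gives $\mmp\{S_{l-1}\le Tt\}\,\mmp\{\eta>L\}$ and then the claimed $O(t^2p^2)$. Second, a level $L_t$ with $\mmp\{\eta>L_t\}=L_t/t$ need not exist when $\tilde G$ has atoms; taking $L_t:=\inf\{L:\mmp\{\eta>L\}\le L/t\}$ (or simply $L_t=\sqrt t$) fixes this without affecting the estimates.
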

\begin{proof}

\noindent (a) A standard result of the renewal theory tells us
that
\begin{equation}\label{lord}
0\leq U(t)-{\tt m}^{-1} t\leq a_0,
\end{equation}
where $a_0$ is a known positive constant.
The second inequality in combination with $\tilde{V}(t)\leq
U(t)$ proves the second  inequality in \eqref{est2}. Using the
first inequality in \eqref{lord} yields
\begin{eqnarray*}
\tilde V(t)-{\tt m}^{-1}t&=&\int_{[0,\,t]}(U(t-y)-{\tt
m}^{-1}(t-y)){\rm d}\tilde G(y)\\&-& {\tt m}^{-1} \int_0^t
(1-\tilde G(y)){\rm d}y\geq -{\tt m}^{-1} \int_0^t (1-\tilde
G(y)){\rm d}y\geq -{\tt m}^{-1}\me\eta.
\end{eqnarray*}
For a proof of  weak convergence see Theorem 3.2 in
\cite{Alsmeyer+Iksanov+Marynych:2017}.

\noindent (b) We shall use a decomposition $$\tilde N(t)-\tilde
V(t)=\sum_{r\geq 0}(\1_{\{S_r+\eta_{r+1}\leq t\}}-\tilde
G(t-S_r))+\int_{[0,\,t]}\tilde G(t-x){\rm d}(\nu(x)-U(x)),$$ where
$\nu(x):=\#\{r\in\mn_0:S_r\leq t\}$ for $x\geq 0$, so that
$U(x)=\me\nu(x)$. It suffices to prove that
\begin{equation}\label{11}
\me\sup_{s\in [0,\,t]}\Big(\sum_{r\geq 0}(\1_{\{S_r+\eta_{r+1}\leq
s\}}-\tilde G(s-S_r))\Big)^2=O(t),\quad t\to\infty
\end{equation}
and
\begin{equation}\label{12}
D(t):=\me\sup_{s\in [0,\,t]}\Big(\int_{[0,\,s]}\tilde G(s-x){\rm
d}(\nu(x)-U(x))\Big)^2=O(t),\quad t\to\infty.
\end{equation}

\noindent {\sc Proof of \eqref{11}}. 
For each $j\in\mn$, we write
\begin{eqnarray*}
\sup_{s\in [j,\, j+1)} \sum_{r\geq 0}(\1_{\{S_r+\eta_{r+1}\leq
s\}}-\tilde G(s-S_r))&\leq& \sum_{r\geq
0}(\1_{\{S_r+\eta_{r+1}\leq j+1\}}-\tilde G(j+1-S_r))\\&+&
\sum_{r\geq 0}(\tilde G(j+1-S_r)-\tilde G(j-S_r)).
\end{eqnarray*}
Similarly,
\begin{eqnarray*}
\sup_{s\in [j,\, j+1)} \sum_{r\geq 0}(\1_{\{S_r+\eta_{r+1}\leq
s\}}-\tilde G(s-S_r))&\geq& \sum_{r\geq
0}(\1_{\{S_r+\eta_{r+1}\leq j\}}-\tilde G(j-S_r))\\&-& \sum_{r\geq
0}(\tilde G(j+1-S_r)-\tilde G(j-S_r)).
\end{eqnarray*}
Thus, \eqref{11} is a consequence of
\begin{equation}\label{14}
\sum_{j=0}^{[t]+1}\me \Big(\sum_{r\geq 0}(\1_{\{S_r+\eta_{r+1}\leq
j\}}-\tilde G(j-S_r))\Big)^2=O(t),\quad t\to\infty
\end{equation}
and
\begin{equation}\label{15}
\sum_{j=0}^{[t]+1}\me \Big(\sum_{r\geq 0}(\tilde G(j+1-S_r)-\tilde
G(j-S_r))\Big)^2=O(t),\quad t\to\infty.
\end{equation}
The second moment in \eqref{14} is equal to $\int_{[0,\,j]}\tilde
G(j-x)(1-\tilde G(j-x)){\rm d}U(x)\leq \int_{[0,\,j]}(1-\tilde
G(j-x)){\rm d}U(x)$. In view of $\me\eta<\infty$, the function
$x\mapsto 1-\tilde G(x)$ is directly Riemann integrable on
$[0,\infty)$. According to Lemma 6.2.8 in \cite{Iksanov:2016} this
implies that the right-hand side of the last inequality is $O(1)$,
as $j\to\infty$, thereby proving \eqref{14}.

Further, set $K(j):=\int_{[0,\,j]}(\tilde G(j+1-x)-\tilde
G(j-x)){\rm d}\nu(x)$ for $j\in\mn_0$. Then $$\me \Big(\sum_{r\geq
0}(\tilde G(j+1-S_r)-\tilde G(j-S_r))\Big)^2\leq 2(\me K(j)^2+\me
(\nu(j+1)-\nu(j))^2)\leq 2(\me K(j)^2+\me \nu(1)^2),$$ where the
last inequality is a consequence of distributional subadditivity
of $\nu$, that is, $\mmp\{\nu(t+s)-\nu(s)>x\}\leq
\mmp\{\nu(t)>x\}$ for $t,s,x\geq 0$. Recall that $\nu(1)$ has
finite exponential moments, so that trivially $\me
\nu(1)^2<\infty$. Left with estimating $\me K(j)^2$ we infer
\begin{eqnarray*}
\me K(j)^2&=&\me \Big(\tilde G(j+1)-\tilde
G(j)+\sum_{k=0}^{j-1}\int_{[k,\,k+1)}\tilde G(j+1-x)-\tilde
G(j-x){\rm d}\nu(x)\Big)^2 \\&\leq& \me \Big(1
+\sum_{k=0}^{j-1}(\tilde G(j+1-k)-\tilde
G(j-k))(\nu(k+1)-\nu(k))\Big)^2\\&\leq& 2\Big(1+ (\tilde G(j)
+\tilde G(j+1)-\tilde G(1))^2\sum_{k=0}^{j-1}\frac{\tilde
G(j+1-k)-\tilde G(j-k)}{\tilde G(j)+\tilde G(j+1)-\tilde G(1)} \me
(\nu(k+1)-\nu(k))^2\Big)\\&\leq& 2(1+(\tilde G(j) +\tilde
G(j+1)-\tilde G(1))^2\me \nu(1)^2)\leq 2(1+4\me \nu(1)^2).
\end{eqnarray*}
Here, the second inequality is implied by convexity of $x\mapsto
x^2$. Combining the obtained estimates together we arrive at
\eqref{15}.

\noindent {\sc Proof of \eqref{12}}. Assuming that
\begin{equation}\label{13}
\me\sup_{s\in[0,\,t]}(\nu(s)-U(s))^2=O(t),
\end{equation}
integration by parts in \eqref{12} yields $$D(t)=\me\sup_{s\in
[0,\,t]}\Big(\int_{[0,\,s]}(\nu(s-x)-U(s-x)){\rm d}\tilde
G(x))\Big)^2\leq \tilde G(t)^2 \me\sup_{s\in
[0,\,t]}(\nu(s)-U(s))^2=O(t)$$ which proves \eqref{12}.

Passing to the proof of \eqref{13} we first observe that in view
of \eqref{lord} relation \eqref{13} is equivalent to
\begin{equation}\label{133}
\me\sup_{s\in[0,\,t]}(\nu(s)-{\tt m}^{-1}s)^2=O(t),\quad
t\to\infty.
\end{equation}
Since $s\mapsto \nu(s)-{\tt m}^{-1}s$ is a (random) piecewise
linear function with slope $-{\tt m}^{-1}$ having unit jumps at
times $S_0$, $S_1,\ldots$ we conclude that
\begin{eqnarray*}
\sup_{s\in [0,\,t]}(\nu(s)-{\tt m}^{-1}s)^2&\leq& \max
\big(\max_{0\leq k\leq \nu(t)}(k-{\tt m}^{-1}S_k)^2, \max_{0\leq
k\leq \nu(t)-1}(k+1-{\tt m}^{-1}S_k)^2\big)\\&\leq&
2\big(1+\max_{0\leq k\leq \nu(t)}(k-{\tt m}^{-1}S_k)^2\big).
\end{eqnarray*}
Applying Doob's inequality to the martingale $(S_{\nu(t)\wedge
n}-{\tt m}(\nu(t)\wedge n))_{n\in\mn_0}$ (this is a martingale
with respect to the filtration generated by the $\xi_k$ because
$\nu(t)$ is a stopping time with respect to the same filtration)
we obtain
\begin{eqnarray*}
\me \max_{0\leq k\leq\nu(t)\wedge n}(S_k-{\tt m}k)^2&=&\me
\max_{0\leq k\leq n}(S_{\nu(t)\wedge k}-{\tt m}(\nu(t)\wedge
k))^2\\&\leq& 4\me (S_{\nu(t)\wedge n}-{\tt m}(\nu(t)\wedge
n))^2=4{\tt s}^2 \me (\nu(t)\wedge n)
\end{eqnarray*}
for each $n\in\mn$. Here, the last equality is nothing else but
Wald's identity. An application of L\'{e}vy's monotone convergence
theorem yields
$$\me \max_{0\leq k\leq\nu(t)}(S_k-{\tt m} k)^2\leq 4{\tt s}^2
U(t).$$ In view of \eqref{lord} the right-hand side is $O(t)$, as
$t\to\infty$, and \eqref{133} follows.


\end{proof}


Recall that $(P_k)_{k\in \mn}$ follows the ${\rm GEM}$
distribution with parameter $\theta>0$ when $U_i$'s in \eqref{BS}
are beta$(\theta,1)$-distributed, in which case $\mu=\me|\log
U_1|=\theta^{-1}, \sigma^2={\rm Var}(\log U_1)=\theta^{-2}$ and
$\me |\log(1-U_1)|=\theta\sum_{n\geq
1}n^{-1}(n+\theta)^{-1}<\infty$. 

\begin{corollary}\label{main}
For $\theta>0$ let $(P_k)_{k\in\mn}$ be ${\rm GEM}$-distributed
with parameter $\theta$, or any random sequence
such that the sequence of $P_k$'s arranged in decreasing order
follows the ${\rm PD}$ distribution with parameter $\theta$. Then
\begin{equation}\label{limpd}
\bigg(\frac{(j-1)!(K_{n,j}(\cdot)-(j!)^{-1}(\theta \log
n(\cdot))^j)}{\sqrt{(\theta \log
n)^{2j-1}}}\bigg)_{j\in\mn}~\Rightarrow~
(B_{j-1}(\cdot))_{j\in\mn},\quad n\to\infty.
\end{equation}
in the product $J_1$-topology on $D[0,1]^\mn$.
\end{corollary}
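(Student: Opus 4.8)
The plan is to deduce both parts of the corollary from the functional limit theorem for homogeneous residual allocation models proved above, together with the remark made at the beginning of Section \ref{main345} that the joint law of the counts $K_{n,j,r}$ is determined by the fragmentation law only through the distribution of $\rho(\cdot)$.

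For the GEM$(\theta)$ case I would simply check the hypotheses and carry out the substitution. When the $U_i$ in \eqref{BS} are beta$(\theta,1)$-distributed we have $\mu=\me|\log U_1|=\theta^{-1}$, $\sigma^2={\rm Var}(\log U_1)=\theta^{-2}$ and $\me|\log(1-U_1)|=\theta\sum_{n\geq1}n^{-1}(n+\theta)^{-1}<\infty$, as recalled immediately before the statement, so the cited theorem applies. Plugging these values into its conclusion, the centering $(j!)^{-1}(\mu^{-1}\log n(\cdot))^j$ becomes $(j!)^{-1}(\theta\log n(\cdot))^j$ since $\mu^{-1}=\theta$, and the normalisation $\sqrt{\sigma^2\mu^{-2j-1}(\log n)^{2j-1}}$ becomes $\sqrt{(\theta\log n)^{2j-1}}$ since $\sigma^2\mu^{-2j-1}=\theta^{-2}\theta^{2j+1}=\theta^{2j-1}$; this is precisely \eqref{limpd}.

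For the PD$(\theta)$ case, and more generally for any random sequence whose decreasing rearrangement is PD$(\theta)$-distributed, the plan is to reduce to the GEM case by the invariance of the nested occupancy counts under reordering of the boxes within each level. Concretely, if $(P_k)_{k\in\mn}$ is GEM$(\theta)$-distributed then its decreasing rearrangement yields the PD$(\theta)$ sequence while leaving the random function $\rho(s)=\#\{k\in\mn:P_k\geq1/s\}$ unchanged; since relabelling the immediate followers $v1,v2,\dots$ of each node $v\in\mathbb{V}$ does not affect any of the counts $K_{n,j,r}$, the environment built via \eqref{fragm} from the PD sequence produces the same joint distribution of $(K_{n,j}(\cdot))_{j\in\mn}$ as the one built from the GEM sequence, so \eqref{limpd} follows from the GEM case. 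I do not anticipate a genuine obstacle here; the only point worth stating with care is that the reordering is applied independently at each of the countably many nodes, which is harmless because the copies $(P_k^{(v)})_{k\in\mn}$ entering \eqref{fragm} are themselves independent.
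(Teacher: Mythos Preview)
Your proposal is correct and matches the paper's (implicit) argument: the paper does not write out a separate proof but simply records the moment values $\mu=\theta^{-1}$, $\sigma^2=\theta^{-2}$, $\me|\log(1-U_1)|<\infty$ immediately before the corollary and relies on the earlier remark that the law of the $K_{n,j,r}$ depends on the fragmentation law only through the distribution of $\rho(\cdot)$, which covers the PD case. Your substitution check and your reordering argument are exactly the intended justifications.
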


\section{Some regenerative models}\label{levy_appl}

For $(X(t))_{t\geq 0}$ a drift-free subordinator with $X(0)=0$ and
a nonzero L{\'e}vy measure $\nu$ supported by $(0,\infty)$ let
$$\Delta X(t)=X(t)-X(t-), ~t\geq 0,$$
be the associated process of jumps. The process $\Delta X(\cdot)$ assumes nonzero values on a countable set, which is dense in case $\nu(0,\infty)=\infty$.
The transformed process (multiplicative subordinator)
$F(t)= 1-e^{-X(t)},\,t\geq 0,$ has the associated process of jumps
$$\Delta F(t)= e^{-X(t-)}(1-e^{-\Delta X(t)}), ~t\geq 0.$$
In this section we identify the fragmentation law
$(P_k)_{k\in\mn}$ with nonzero jumps $\Delta F(\cdot)$ arranged in
some order (for instance by decrease). Note that multiplying the
L{\'e}vy measure by a positive factor corresponds to a time-change
for $F$, hence does not affect the derived fragmentation law.

We shall assume that the L\'{e}vy measure $\nu$ is infinite and
has the right tail $\nu([x,\infty))$ satisfying
\begin{equation}\label{logar}
\beta_0+\beta_1 |\log x|^{q-r_2} \leq \nu ([x,\infty))-c_0|\log
x|^q \leq \alpha_0+\alpha_1 |\log x|^{q-r_1}
\end{equation}
for small enough  $x>0$ and some $q, c_0,  \alpha_0, \alpha_1>0$,
$1/2<r_1, r_2\leq q+1$ and $\beta_0, \beta_1<0$.
\begin{thm}\label{main2000}
Assume that \eqref{logar} holds with $q, r_1, r_2\geq 1$ and
$$m:=\me X(1)=\int_{[0,\infty)}x \nu({\rm d}x)<\infty,~~ s^2:={\rm
Var}\,X(1)=\int_{[0,\infty)}x^2\nu({\rm d}x)<\infty.$$ Then
$$\bigg(\frac{K_{n,j}(\cdot)-c_j^\ast(\log n(\cdot))^{(q+1)j}}   {q{\rm B}(q,(q+1)j-q)sm^{-3/2}c_{j-1}^\ast (\log n)^{(q+1)j-1/2}}\bigg)_{j\in\mn}~\Rightarrow~
(B_{(q+1)j-1}(\cdot))_{j\in\mn},\quad n\to\infty$$ in the product
$J_1$-topology on $D[0,1]^\mn$, where
$$c_j^\ast:=\frac{c_0\Gamma(q+2)}{m(q+1)\Gamma((q+1)j+1)},\quad j\in\mn.$$
\end{thm}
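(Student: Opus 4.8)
The plan is to deduce Theorem \ref{main2000} from Theorem \ref{main0} exactly as Theorem \ref{main} and Corollary \ref{main} were deduced in the homogeneous case, so the work is entirely in verifying conditions (i)--(iii) of Theorem \ref{main0} for the fragmentation law read off from the jumps of $F(t)=1-e^{-X(t)}$. First I would set up the bijection between the atoms of $\Delta F$ and a perturbed-random-walk structure. The nonzero values of $\Delta F(\cdot)$ are indexed by the jump times $t$ of $X$; a jump of size $\Delta X(t)=y$ occurring when $X(t-)=x$ contributes an atom $e^{-x}(1-e^{-y})$, i.e.\ $T=-\log \Delta F = x-\log(1-e^{-y})$. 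Under the Poisson structure of the jumps of a subordinator this is precisely the value set of a decorated Poisson/renewal-type point process: the ``spine'' $X(t-)$ values at jump times form (after the usual change of measure) a renewal process with renewal measure the potential (occupation) measure $U_X$ of $X$, and each point is perturbed by the independent increment $-\log(1-e^{-\Delta X})$. Concretely, I would show
\begin{equation}\label{Vstar}
V(t)=\E\,\#\{\text{atoms of }\Delta F\ge e^{-t}\}=\int_{[0,\,t]}\overline{G}^{\,c}(t-y)\,U_X({\rm d}y),
\end{equation}
where $U_X({\rm d}y)=\int_0^\infty \mathbb P\{X(s)\in {\rm d}y\}\,{\rm d}s$ is the potential measure of $X$ and $G^c$ is the law of $-\log(1-e^{-\Delta X})$ suitably normalised by $\nu$; this is the exact analogue of \eqref{equ}, with $U_X$ playing the role of the renewal function $U$ and with the extra bookkeeping that $\nu$ is infinite so one integrates against the full jump measure rather than a single copy. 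The representation \eqref{basic decomp} for $N_j$ then holds because the sub-tree rooted at a level-one box is again governed by an independent copy of the same point process.

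The analytic heart is then to translate \eqref{logar} on $\nu([x,\infty))$ into condition (i), i.e.\ \eqref{1}, for $V$, with $\omega=q+1$, $c=c_0\Gamma(q+2)/(m(q+1))$, and $\varepsilon_i=r_i-$(something)$>0$ chosen from $r_1,r_2$. The substitution $x=e^{-u}$ turns $\nu([e^{-u},\infty))$ into a tail that grows like $c_0u^q$, and the key is a renewal-theoretic sharpening of the elementary renewal theorem: one needs $U_X(t)=m^{-1}t+O(1)$ (the analogue of \eqref{lord}), which holds under $s^2<\infty$ by the classical Lorden-type bound for subordinators, and then the convolution $\int_{[0,t]}\overline{G}^c(t-y)\,U_X({\rm d}y)$ with a tail behaving polynomially in $\log$ produces $V(t)=c\,t^{q+1}+$ lower-order $\log$-power corrections; the constant $c$ comes out of $\int_0^\infty u^{-1}\,{\rm d}(c_0(\log\cdot)$-type$)$ computations and the Beta-function identity already used around \eqref{equal}. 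Condition (iii), \eqref{flt_assumption}, with $\gamma=\omega-1/2=(q+1)-1/2$, $a=q{\rm B}(q,(q+1)-q)\,s\,m^{-3/2}=q\,s\,m^{-3/2}$, and $W=B_q$ the Riemann--Liouville process, is a functional CLT for the counting process $N(t)=\#\{\text{atoms}\ge e^{-t}\}$: I would write $N(t)$ as a functional of the subordinator's occupation process, note that the leading fluctuation comes from the Brownian fluctuations of $X$ (since $s^2<\infty$ gives $X(t)\approx mt+sB(t)$ by Donsker), and a delta-method / Skorokhod-map argument pushes the $t^{1/2}B$-scale fluctuation of $X$ through the $u^q$-weighting inherent in the $(\log)^q$-tail, yielding $B_q$; the extra $\log$-powers in \eqref{logar} below the $|\log x|^q$ term are negligible on the $t^\gamma=t^{(q+1)-1/2}$ scale precisely because $r_1,r_2>1/2$. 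Condition (ii), \eqref{vari}, is the second-moment bound $\E\sup_{s\le t}(N(s)-V(s))^2=O(t^{2\gamma})$ with $\gamma=(q+1)-1/2$, which is far from tight and follows from the same decomposition used for \eqref{11}--\eqref{13} in Lemma \ref{appl_BS}: a martingale/Doob argument for the renewal part of the occupation process ($s^2<\infty$ gives the $O(t)$ rate, and $t=o(t^{2\gamma})$ since $2\gamma=2q+1>1$) plus a direct variance computation for the perturbation part using that $u\mapsto\overline{G}^c(u)$ has the required integrability from $m<\infty$.

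With (i)--(iii) in hand, Theorem \ref{main0} gives \eqref{relation_main5000} with these parameters; it remains to identify $R_j^{(\omega)}$ with $B_{(q+1)j-1}$ and the constant. Since $W=B_q$ is itself $B_q(s)=\int_{[0,s]}(s-y)^q\,{\rm d}B(y)$, the definition $R_j^{(\omega)}(s)=\int_{[0,s]}(s-y)^{\omega(j-1)}\,{\rm d}W(y)$ with $\omega=q+1$ is, after one more integration by parts / Fubini on the stochastic integral, $\int_{[0,s]}(s-y)^{(q+1)(j-1)+q}\,{\rm d}B(y)\cdot(\text{constant})=(\text{constant})\,B_{(q+1)j-1}(s)$, where the constant is a product of Beta functions; matching it against $c_j^\ast$ and the normalising $q{\rm B}(q,(q+1)j-q)\,s\,m^{-3/2}c_{j-1}^\ast$ in the statement is a bookkeeping check exactly parallel to the passage from $c_j$ in \eqref{defcj} to the GEM constants in Theorem \ref{main} and Corollary \ref{main}. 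The required local H\"older continuity of $W=B_q$ with exponent $\beta<q+1/2$ is cited in the paper (from \cite{Hu+Nualart+Song}). I expect the main obstacle to be the sharp estimate \eqref{Vstar}$\Rightarrow$\eqref{1}: one must carry the two-sided polynomial-in-$\log$ correction from $\nu$ through the renewal convolution without losing the $\varepsilon_i>0$ gap, and in particular verify that the $O(1)$ error in $U_X(t)=m^{-1}t+O(1)$ and the tail of $\overline{G}^c$ only contribute to the $a_0,b_0$ and $t^{\omega-\varepsilon_i}$ terms, never corrupting the leading constant $c$; this is the place where $s^2<\infty$ (not merely $m<\infty$) is genuinely used, mirroring the role of ${\tt s}^2<\infty$ in Lemma \ref{appl_BS}.
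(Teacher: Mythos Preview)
Your proposal is correct and follows essentially the same route as the paper: deduce the theorem from Theorem \ref{main0} by verifying conditions (i)--(iii) via a lemma (the paper's Lemma \ref{subord}) built on the representation $V(t)=\int_{[0,t]}f(t-y)\,{\rm d}U^\ast(y)$ with $U^\ast$ the potential measure of $X$ and $f(x)=\nu([-\log(1-e^{-x}),\infty))$, together with the Lorden-type bound $0\le U^\ast(t)-m^{-1}t\le a_0^\ast$, Donsker for the first-passage process $T(x)=\inf\{t:X(t)>x\}$ to get $W=B_q$, and the Beta identity $\int_{[0,u]}(u-y)^\alpha\,{\rm d}B_q(y)=q\,{\rm B}(q,\alpha+1)\,B_{q+\alpha}(u)$ with $\alpha=(q+1)(j-1)$ to identify $R_j^{(\omega)}$ with a multiple of $B_{(q+1)j-1}$. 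One small correction: the constant in condition (i) is $c=c_0/(m(q+1))$ (so that $c\Gamma(\omega+1)=c_0\Gamma(q+2)/(m(q+1))$), not $c=c_0\Gamma(q+2)/(m(q+1))$ as you wrote; and $s^2<\infty$ is used for the Donsker step and the second-moment bound rather than for the Lorden inequality itself.
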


Theorem \ref{main2000} applies to the gamma subordinator with the
L\'{e}vy measure $$\nu({\rm d}x)=\theta
x^{-1}e^{-\lambda x}\1_{(0,\infty)}(x){\rm
d}x$$ and to the subordinator with $$\nu({\rm
d}x)=\theta(1-e^{-x})^{-1}e^{-\lambda x}\1_{(0,\infty)}(x){\rm
d}x,$$ where $\theta,\lambda>0$. In both cases $s^2<\infty$ and \eqref{logar} holds with
$c_0=q=r_1=r_2=1$.

Theorem \ref{main2000} is a consequence of Theorem \ref{main0}, the easily checked formula
$$\int_{[0,\,u]}(u-y)^\alpha {\rm d}B_q(y)=q{\rm B}(q,\alpha+1)\int_{[0,\,u]}(u-y)^{q+\alpha}{\rm d}B(y),\quad u\geq 0,~\alpha,q>0$$ which we use for
$\alpha=(q+1)(j-1)$ and the next lemma.
\begin{lemma}\label{subord}
Assume that \eqref{logar} holds and $s^2<\infty$. Then the following is true:
\begin{itemize}
\item[\rm (a)]
\begin{equation}\label{ineq3567}
b_0+b_1t^{q-\min(r_2-1, 0)}\leq V(t)- c_0(m(q+1))^{-1}t^{q+1}\leq
a_0+a_1 t^{q-\min(r_1-1, 0)},\quad t>0
\end{equation}
for some constants $a_0$, $a_1>0$ and $b_0$, $b_1\leq 0$, where
$m=\me X(1)<\infty,$

\item[\rm (b)] $$\frac{N(t\cdot)-c_0(m(q+1))^{-1}(t\cdot)^{q+1}}{s m^{-3/2}t^{q+1/2}}~\Rightarrow~ B_q(\cdot),\quad t\to\infty$$
in the $J_1$-topology on $D$.

\item[\rm (c)] Assume that $q, r_1, r_2\geq 1$. Then $\me \sup_{s\in [0,\,t]}(N(s)-V(s))^2=O(t^{2q+1})$ as $t\to\infty$.

\end{itemize}
\end{lemma}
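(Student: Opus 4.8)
The plan is to funnel everything through the potential function and the first-passage process of the subordinator $X$. Put $\ell(y):=-\log(1-e^{-y})$; then $\ell$ is a decreasing involution of $(0,\infty)$ and $|\log\ell(w)|=w+O(e^{-w})$ as $w\to\infty$. Since $\Delta F(\tau)=e^{-X(\tau-)}(1-e^{-\Delta X(\tau)})$ at a jump $\tau$ of $X$, we have $-\log\Delta F(\tau)=X(\tau-)+\ell(\Delta X(\tau))$, so $N(t)=\#\{\tau:\,X(\tau-)+\ell(\Delta X(\tau))\le t\}$. Write $U(x):=\int_0^\infty\mmp\{X(u)\le x\}\,{\rm d}u$ (so $U(0)=0$, as $X$ is drift-free with infinite $\nu$) and $\sigma(x):=\inf\{u\ge0:X(u)>x\}$, so that $\{X(u)\le x\}=\{\sigma(x)>u\}$ and $\me\,\sigma=U$. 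Under $s^2={\rm Var}\,X(1)<\infty$ I would use two standard facts: the renewal theorem for subordinators, $U(x)=m^{-1}x+O(1)$ with bounded remainder; and the Donsker FCLT $(s\,t^{1/2})^{-1}(X(t\,\cdot)-mt\,\cdot)\Rightarrow B(\cdot)$ in $D$, which on inversion gives $(s\,m^{-3/2}t^{1/2})^{-1}(\sigma(t\,\cdot)-m^{-1}t\,\cdot)\Rightarrow B(\cdot)$. Applying the compensation formula to the Poisson process of jumps and substituting $z=\ell(y)$ yields the renewal-type representation
$$V(t)=\int_{[0,\,t)}G(t-x)\,{\rm d}U(x),\qquad G(w):=\nu\big([\ell(w),\infty)\big),$$
the analogue of \eqref{equ}; and the compensation formula makes $N(t)-\int_0^r G(t-X(u-))\,{\rm d}u$ a martingale in $r$, whose $L^2$-limit as $r\to\infty$ produces the decomposition $N(t)-V(t)=X^{\ast}(t)+Y^{\ast}(t)$, where $X^{\ast}(t):=N(t)-\int_{[0,t)}G(t-x)\,{\rm d}\sigma(x)$ and $Y^{\ast}(t):=\int_{[0,t)}G(t-x)\,{\rm d}(\sigma(x)-U(x))$, with $\me X^{\ast}(t)^2=\me\int_{[0,t)}G(t-x)\,{\rm d}\sigma(x)=V(t)$ (here $\int_{[0,t)}G(t-x)\,{\rm d}\sigma(x)=\int_0^{\sigma(t)}G(t-X(u))\,{\rm d}u$ by the change of variables $x=X(u)$, legitimate since $X$ is strictly increasing with continuous inverse $\sigma$).

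\noindent\emph{Part (a).} Insert $U(x)=m^{-1}x+O(1)$ into $V(t)=\int_{[0,t)}G(t-x)\,{\rm d}U(x)$ and split the $x$-integral at $x=t-x_0$ for a fixed large $x_0$. On $[t-x_0,t)$ the argument $\ell(t-x)$ is bounded away from $0$, $G$ is bounded there and ${\rm d}U$ carries mass $O(1)$, so this piece is an $O(1)$ absorbed into $a_0,b_0$. On $[0,t-x_0)$ the argument $\ell(t-x)$ is small, so \eqref{logar} together with $|\log\ell(w)|=w+O(e^{-w})$ sandwiches $G(w)$ between $c_0w^q+\beta_0+\beta_1w^{q-r_2}$ and $c_0w^q+\alpha_0+\alpha_1w^{q-r_1}$ up to exponentially small terms; each monomial integrates elementarily against ${\rm d}x/m$, and the bounded remainder of $U$ adds, after one integration by parts, an extra $O(t^q)$. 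The $c_0w^q$ term produces the leading term $c_0(m(q+1))^{-1}t^{q+1}$ and the remaining terms produce the errors displayed in \eqref{ineq3567}.

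\noindent\emph{Parts (b) and (c).} Both exploit the decomposition above. $X^{\ast}$ is negligible: $\me X^{\ast}(t)^2=V(t)=O(t^{q+1})=o(t^{2q+1})$, and a maximal estimate (using that $N$ and $\int_{[0,\cdot)}G(\cdot-x)\,{\rm d}\sigma(x)$ are both increasing) upgrades this to $\sup_{u\le T}X^{\ast}(tu)^2=o_P(t^{2q+1})$, which for (b) kills $X^{\ast}$ at scale $t^{q+1/2}$ and for (c) gives $\me\sup_{u\le t}X^{\ast}(u)^2=O(t^{q+1})$. For $Y^{\ast}$ write $\sigma-U=(\sigma-m^{-1}{\rm id})-(U-m^{-1}{\rm id})$: the bounded-remainder part contributes $O(t^q)$ after one integration by parts, and an integration by parts in the main part gives, at time $u$, the expression $\int_0^{tu}(\sigma(x)-m^{-1}x)\,{\rm d}_x(-G(tu-x))$, in which by Part (a) ${\rm d}_x(-G(tu-x))=c_0q\,(tu-x)^{q-1}{\rm d}x$ up to lower order. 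For (b) I substitute $x=tv$ and divide by $s\,m^{-3/2}t^{q+1/2}$, obtaining $c_0q\int_0^u(s\,m^{-3/2}t^{1/2})^{-1}(\sigma(tv)-m^{-1}tv)\,(u-v)^{q-1}{\rm d}v$; since $\phi\mapsto\big(u\mapsto q\int_0^u\phi(v)(u-v)^{q-1}{\rm d}v\big)$ is continuous on $D[0,1]$ and $q\int_0^u B(v)(u-v)^{q-1}{\rm d}v=\int_0^u(u-v)^q\,{\rm d}B(v)=B_q(u)$ by integration by parts, the FCLT for $\sigma$ and the continuous mapping theorem identify the limit as $B_q(\cdot)$ under the normalization of (b) (the centering $c_0(m(q+1))^{-1}(t\,\cdot)^{q+1}$ may replace $V(t\,\cdot)$, their difference being $o(t^{q+1/2})$ by Part (a)). For (c) I instead use $|Y^{\ast}(u)|\le G(u)\sup_{x\le u}|\sigma(x)-U(x)|\le\lambda\,t^q\big(\sup_{x\le t}|\sigma(x)-m^{-1}x|+O(1)\big)$ for $u\le t$ (from $G(w)\le\lambda w^q$, valid since $q,r_1,r_2\ge1$), and then $\me\sup_{x\le t}(\sigma(x)-m^{-1}x)^2=O(t)$: since $\sigma(x)$ equals the time $u_x$ at which $X$ jumps over (or reaches) level $x$, i.e.\ $X(u_x-)\le x\le X(u_x)$, one has $\sup_{x\le t}|\sigma(x)-m^{-1}x|\le m^{-1}\sup_{u\le\sigma(t)}|X(u)-mu|$, whence Doob's $L^2$-inequality and optional stopping for the martingale $u\mapsto X(u)-mu$ give $\le 4m^{-2}\,{\rm Var}(X(1))\,\me\,\sigma(t)=4m^{-2}\,{\rm Var}(X(1))\,U(t)=O(t)$. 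Combining, $\me\sup_{u\le t}(N(u)-V(u))^2=O(t^{2q}\cdot t)+O(t^{q+1})=O(t^{2q+1})$.

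\noindent\emph{Main obstacle.} The hard part is the probabilistic core of (b)--(c): setting up the decomposition $N-V=X^{\ast}+Y^{\ast}$ cleanly (the $L^2$-martingale limit and the change of variables $x=X(u)$), controlling $X^{\ast}$ at the correct scale \emph{uniformly on compacts} (which needs a maximal inequality beyond the one-point bound $\me X^{\ast}(t)^2=V(t)$), and --- for (b) --- proving tightness of the fractional-integral approximation of $Y^{\ast}$ so that the continuous-mapping step yielding $B_q$ is justified. The two analytic inputs to be supplied are the subordinator renewal estimate $U(x)=m^{-1}x+O(1)$ and the maximal bound $\me\sup_{x\le t}(\sigma(x)-m^{-1}x)^2=O(t)$.
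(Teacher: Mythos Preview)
Your approach is essentially the paper's: the same renewal representation $V(t)=\int_{[0,t)}G(t-x)\,{\rm d}U(x)$ with the same $G$ and the same first-passage process $\sigma$ (the paper calls them $f$ and $T$), the same decomposition $N-V=X^\ast+Y^\ast$ into a compensated-jump part and an integral against ${\rm d}(\sigma-U)$, and the same way of handling $Y^\ast$ via integration by parts together with the FCLT for $\sigma$ (for (b)) and the maximal bound $\me\sup_{x\le t}(\sigma(x)-U(x))^2=O(t)$ (for (c)). The paper obtains the latter by comparing $\sigma$ with the discrete-time first passage of $(X(k))_{k\in\mn_0}$ and invoking the random-walk estimate \eqref{13}; your direct route via Doob for the L\'evy martingale $u\mapsto X(u)-mu$ stopped at $\sigma(t)$ is a clean alternative.

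One correction on the $X^\ast$ part. The monotonicity sandwich you allude to---writing, for $u\in[j,j+1)$, $X^\ast(u)$ between $X^\ast(j)-(N_2(j{+}1)-N_2(j))$ and $X^\ast(j{+}1)+(N_2(j{+}1)-N_2(j))$ with $N_2(u):=\int_{[0,u)}G(u-x)\,{\rm d}\sigma(x)$---is exactly what the paper does, but it delivers $\me\sup_{u\le t}X^\ast(u)^2=O(t^{q+2})$, not $O(t^{q+1})$: one is forced to bound the maximum over integer points by the sum $\sum_{\ell\le t}\me X^\ast(\ell)^2=\sum_{\ell\le t}V(\ell)=O(t^{q+2})$, since $X^\ast$ is not a martingale in $t$. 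This is precisely where the hypothesis $q\ge 1$ in part (c) enters (so that $t^{q+2}\le t^{2q+1}$); your claimed $O(t^{q+1})$ would make that hypothesis superfluous for this term, but the monotonicity argument alone does not yield it. Aside from this overclaim, your plan matches the paper's proof.
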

\begin{proof}
(a) Set $f(x):=\nu ([-\log (1-e^{-x}), \infty))$ for $x\geq 0$.
Inequality \eqref{logar} in combination with $\lim_{x\to\infty}\nu
([x,\infty))=0$ entails
\begin{equation}\label{logar2}
\beta_0+\beta_1 x^{q-r_2} \leq f(x)-c_0x^q \leq \alpha_0+\alpha_1
x^{q-r_1}
\end{equation}
for all $x>0$ and some constants $\alpha_0$, $\alpha_1$, $\beta_0$
and $\beta_1$ which are not necessarily the same as in
\eqref{logar}.

Since $$N(t)=\sum \1_{\{X(s-)-\log (1-e^{-\Delta X(s)})\leq
t\}}=\sum \1_{\{\Delta X(s)\geq -\log (1-e^{-(t-X(s-))})\}},$$
where the summation extends to all $s>0$ with $\Delta X(s)>0$, we
conclude that $V(x)=\me N(x)=\int_{[0,\,x]}f(x-y){\rm
d}U^\ast(y)$, where $U^\ast(x):=\int_0^\infty \mmp\{X(t)\leq
x\}{\rm d}x=\me T(x)$ is the renewal function and
$T(x):=\inf\{t>0: X(t)>x\}$ for $x\geq 0$.

Similarly to \eqref{lord} we have
\begin{equation}\label{lord1}
0\leq U^\ast(t)-m^{-1} t\leq a^\ast_0,\quad t\geq 0,
\end{equation}
where $a^\ast_0$ is a known positive constant. Using this and
\eqref{logar2} we infer
\begin{eqnarray*}
V(t)-c_0(m(q+1))^{-1}t^{q+1}&=&\int_{[0,\,t]}(U^\ast(t-y)-m^{-1}(t-y)){\rm
d}f(y)\\&+&m^{-1}\int_0^t (f(y)-c_0y ^q) {\rm d}y\leq a^\ast_0
f(t)+m^{-1}\int_0^t (\alpha_0+\alpha_1 y^{q-r_1}){\rm d}y\\&\leq&
a_0(\alpha_0+\alpha_1 t^{q-r_1}+c_0t^q)+m^{-1}(\alpha_0
t+\alpha_1(q-r_1+1)^{-1}t^{q-r_1+1}).
\end{eqnarray*}
This proves the second inequality in \eqref{ineq3567}. Arguing
analogously we obtain
\begin{eqnarray*}
V(t)-c_0(m(q+1))^{-1}t^{q+1}&\geq & m^{-1}\int_0^t (f(y)-c_0y ^q)
{\rm d}y\geq  m^{-1}\int_0^t (\beta_0+\beta_1 y^{q-r_2}){\rm
d}y\\&\geq& m^{-1}(\beta_0 t+ \beta_1 (q-r_2+1)^{-1}t^{q+1-r_2}),
\end{eqnarray*}
thereby proving the first inequality in \eqref{ineq3567}.

\noindent (b) Write
\begin{eqnarray}
N(t)&=& \sum \big( \1_{\{\Delta X(s)\geq -\log
(1-e^{-(t-X(s-))})\}}-f(t-X(s-))\big)\1_{\{X(s-)\leq
t\}}\notag\\&+& \sum f(t-X(s-))\1_{\{X(s-)\leq
t\}}=:N_1(t)+N_2(t).\label{repr}
\end{eqnarray}
As a preparation for the proof of part (b) we intend to show that
\begin{equation}\label{11123}
\lim_{t\to\infty}t^{-q-1/2} N_1(t)=0\quad \text{a.s.},\quad
t\to\infty.
\end{equation}

\noindent {\sc Proof of \eqref{11123}}. To reduce technicalities
to a minimum we only consider the case $q>1$. Since $\me
[N_1(t)]^2= V(t)$ and $V(t)\sim c_0(m(q+1))^{-1}t^{q+1}$ as
$t\to\infty$ we conclude that
$$\lim_{\mn\ni \ell\to\infty}\ell^{(q+1/2)}N_1(\ell)=0\quad \text{a.s.}$$ by the Borel-Cantelli
lemma. For each $t\ge 0$, there exists $\ell\in\mn_0$ such that
$t\in [\ell, \ell+1)$. Now we use a.s.\ monotonicity of $N(t)$ and
$N_2(t)$ to obtain
\begin{eqnarray*}
&&(\ell+1)^{-(q+1/2)}(N_1(\ell)-(N_2(\ell+1)-N_2(\ell)))\leq
t^{-(q+1/2)}N_1(t)\\&\le&
\ell^{-(q+1/2)}(N_1(\ell+1)+N_2(\ell+1)-N_2(\ell))\quad
\text{a.s.}
\end{eqnarray*}
Thus, it remains to check that
$$\lim_{\ell\to\infty} \ell^{-(q+1/2)}(N_2(\ell+1)-N_2(\ell))=0,\quad \text{a.s.}$$
In view of \eqref{logar2}, $f$ satisfies a counterpart of
\eqref{est_imp}, whence
\begin{eqnarray}
N_2(\ell+1)-N_2(\ell)&=&\int_{[0,\,\ell]}
(f(\ell+1-y)-f(\ell-y)){\rm d}T(y)+\int_{(\ell,\,
\ell+1]}f(\ell+1-y){\rm d}T(y)\label{ineq23}\\&\leq& (c_0
(q-1)(\ell+1)^{q-1}+\alpha_0+\alpha_1
(\ell+1)^{q-r_1}-\beta_0+|\beta_1|\ell^{q-r_2}+f(1))T(\ell+1)\notag\\&=&O(\ell^{q+\max(1-r_1,
1-r_2, 0)})=o(\ell^{q+1/2})\notag
\end{eqnarray}
a.s.\ as $\ell\to\infty$. For the penultimate equality we have
used the strong law of large numbers for $T(y)$. The last equality
follows from $r_1, r_2>1/2$.

We are ready to prove part (b). We shall use representation
\eqref{repr}. Relation \eqref{11123} entails
\begin{equation}\label{inter2}
t^{-q-1/2}\sup_{y\in [0,\,T]}N_1(ty)~\tp~0,\quad t\to\infty.
\end{equation}
for each $T>0$. Thus, we are left with showing that
$$\frac{N_2(t\cdot)-c_0(m(q+1))^{-1}(t\cdot)^{q+1}}{s m^{-3/2}t^{q+1/2}}~\Rightarrow~ B_q(\cdot),\quad t\to\infty$$
in the $J_1$-topology on $D$. The proof of this is similar to that
of weak convergence of the $j$th coordinate, $j\geq 2$, in
\eqref{limit10000}. The only difference is that, instead of
\eqref{flt_assumption2}, we use
$$\frac{T(t\cdot)-m^{-1}(t\cdot)}{s
m^{-3/2}t^{1/2}}~\Rightarrow~ B(\cdot),\quad t\to\infty$$ in the
$J_1$-topology on $D$, where $B$ is a Brownian motion, see Theorem
2a in \cite{Bingham:1972}.

\noindent (c) Since the proof is analogous to that of Lemma
\ref{appl_BS}(b) we only give a sketch. In view of \eqref{repr} it
suffices to prove that, as $t\to\infty$,
\begin{equation}\label{1122}
\me\sup_{s\in [0,\,t]}\Big(\sum \big( \1_{\{\Delta X(v)\geq -\log
(1-e^{-(s-X(v-))})\}}-f(s-X(v-))\big)\1_{\{X(v-)\leq
s\}}\Big)^2=O(t^{2q+1})
\end{equation}
and
\begin{equation}\label{1222}
\me\sup_{s\in [0,\,t]}\Big(\int_{[0,\,s]}f(s-x){\rm
d}(T(x)-U^\ast(x))\Big)^2=O(t^{2q+1}).
\end{equation}

\noindent {\sc Proof of \eqref{1122}}. Arguing as in the proof of
Lemma \ref{appl_BS}(b) we conclude that \eqref{1122} is a
consequence of
\begin{equation}\label{1422}
\sum_{\ell=0}^{[t]+1}\me \Big(\sum \big( \1_{\{\Delta X(v)\geq
-\log (1-e^{-(\ell-X(v-))})\}}-f(\ell-X(v-))\big)\1_{\{X(v-)\leq
\ell\}}\Big)^2=O(t^{2q+1})
\end{equation}
and
\begin{equation}\label{1522}
\sum_{\ell=0}^{[t]+1}\me (N_2(\ell+1)-N_2(\ell))^2=O(t^{2q+1}).
\end{equation}
The second moment in \eqref{1422} is equal to
$V(\ell)=O(\ell^{q+1})$. This entails that the left-hand side of
\eqref{1422} is $O(t^{q+2})$, hence $O(t^{2q+1})$ because of the
assumption $q\geq 1$. Finally, since $r_1, r_2\geq 1$ by
assumption and $\me [T(\ell)]^2=O(\ell^2)$, inequality
\eqref{ineq23} entails $\me
(N_2(\ell+1)-N_2(\ell))^2=O(\ell^{2q})$ and thereupon
\eqref{1522}.

\noindent {\sc Proof of \eqref{1222}}. Set $\hat
\nu(x):=\inf\{k\in\mn: X(k)>x\}$ for $x\geq 0$. Since $T(x)\leq
\hat \nu(x)\leq T(x)+1$ a.s.\ and, according to \eqref{13}, $\me
\sup_{s\in [0,\,t]}(\hat \nu(s)-\me \hat\nu(s))^2=O(t)$ as
$t\to\infty$, we infer $\me \sup_{s\in
[0,\,t]}(T(s)-U^\ast(s))^2=O(t)$ as $t\to\infty$. With this at
hand, relation \eqref{1222} readily follows.

The proof of Lemma \ref{subord} is complete.
\end{proof}

\section{The Poisson-Kingman model}

Let $(X(t))_{t\geq 0}$ be a subordinator as in Section
\ref{levy_appl} with the only differences that the parameters in
\eqref{logar} satisfy $q\in (0,2)$, $q/2<r_1, r_2\leq q$ and that
we additionally assume
\begin{equation}\label{logLevy}
\int_{(1,\infty)}(\log x)^s\nu({\rm d}x)<\infty,
\end{equation}
where $s=2q$ when $q\in (0,3/2)$ and $s=\varepsilon+q/(2-q)$ for
some $\varepsilon>0$ when $q\in [3/2, 2)$.

The ranked sequence of jumps of the process  $(X(t)/X(1))_{t\in
[0,1]}$ can be represented as $P_j:=L_j/L$, where $L_1\geq
L_2\geq\cdots>0$ is the sequence of atoms of a non-homogeneous
Poisson random measure with mean measure $\nu$, and
$L:=\sum_{j\geq 1} L_j\stackrel{{\rm d}}{=}X(1)$. This is the
Poisson-Kingman construction \cite[Section 3]{Pitman:2003} of
probabilities $(P_j)_{j\in\mn}$, which we regard as fragmentation
law underlying a nested occupancy scheme.
\begin{thm}\label{main2001}
Assume that the function $x\mapsto \nu((x,\infty))$ is strictly
increasing and continuous on $[0,\infty)$. For the fragmentation
law as described above limit relation \eqref{relation_main5000}
holds with $\omega=q$, $\gamma=q/2$, $c=c_0$, $a=c_0^{1/2}$ and
$W(s):=B(s^q)$ for $s\geq 0$ being a time changed Brownian motion.
\end{thm}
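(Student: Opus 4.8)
The plan is to deduce Theorem~\ref{main2001} from Theorem~\ref{main0} by verifying its hypotheses (i), (ii), (iii) for the Poisson--Kingman fragmentation law with $\omega=q$, $\gamma=q/2$, $c=c_0$, $a=c_0^{1/2}$ and $W(s)=B(s^q)$; then \eqref{relation_main5000} is exactly the assertion of the theorem. The backbone of the argument is the representation $P_j=L_j/L$ with $L=\sum_{i\ge1}L_i\od X(1)$ and $(L_j)$ the atoms of a Poisson random measure $\Pi$ on $(0,\infty)$ with mean measure $\nu$. Then $\rho(e^t)=\#\{j:L_j\ge Le^{-t}\}$, so, writing $\widehat N(u):=\#\{j:L_j\ge e^{-u}\}=\Pi([e^{-u},\infty))$, one has the key identity $N(t)=\widehat N(t-\log L)$. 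The process $(\widehat N(u))_{u\ge0}$ has independent increments and Poisson marginals, i.e.\ it is a rate-one Poisson process $P$ run along the deterministic and (by the hypothesis on $\nu$) continuous time change $u\mapsto A(u):=\nu([e^{-u},\infty))$; substituting $x=e^{-u}$ in \eqref{logar} gives the two-sided bound $\beta_0+\beta_1u^{q-r_2}\le A(u)-c_0u^q\le\alpha_0+\alpha_1u^{q-r_1}$ for large $u$, so $A(u)\sim c_0u^q$.

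For condition (iii) I would first prove the functional limit for the unshifted process $\widehat N$. Since $\widehat N\od P\circ A$ and $A(tu)/t^q\to c_0u^q$ locally uniformly, the classical Donsker limit $\bigl((P(t^qv)-t^qv)/t^{q/2}\bigr)_{v\ge0}\Rightarrow B(\cdot)$ together with a composition/continuous-mapping argument yields $(\widehat N(tu)-A(tu))/t^{q/2}\Rightarrow B(c_0u^q)\od c_0^{1/2}B(u^q)$ in the $J_1$-topology on $D$. Transferring to $N(t)=\widehat N(t-\log L)$: the shift $\log L$ is a.s.\ finite, hence asymptotically negligible after rescaling time by $t$. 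Concretely, one combines the $\widehat N$-limit with the deterministic estimate $\sup_{u\le T}|A(tu-\log L)-A(tu)|=O(t^{q-1})=o(t^{q/2})$ (valid since $q<2$) and with a displacement estimate of the type of Corollary~\ref{aux1} applied to $\widehat N$, to obtain $(N(tu)-A(tu))/t^{q/2}\Rightarrow c_0^{1/2}B(u^q)$; since $A(tu)-c_0(tu)^q=o(t^{q/2})$ uniformly on compacts (as $\min(r_1,r_2)>q/2$), this is \eqref{flt_assumption} with the stated parameters. Finally one records that $W(s)=B(s^q)$ is centered, satisfies $W(0)=0$, and is a.s.\ locally H\"older continuous with any exponent $\beta<q/2$ near $s=0$ (and smoothly composed away from $0$), hence is an admissible limit process.

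For condition (i) I would compute $V(t)=\me N(t)$ by Mecke's formula: adding an atom $x$ to $\Pi$ contributes one unit to $N(t)$ iff $x\ge e^{-t}(x+\sum_{y\in\Pi}y)$, so $V(t)=\int_0^\infty\mmp\{X(1)\le x(e^t-1)\}\,\nu({\rm d}x)$. Comparing with $A(t)=\int_0^\infty\mathbbm 1_{\{x\ge e^{-t}\}}\nu({\rm d}x)$ and using \eqref{logar}, the thin left tail of $X(1)$ (which follows from \eqref{logar} via the growth $\Phi(\lambda)\sim c_0(\log\lambda)^q$ of the Laplace exponent and a Chernoff bound, giving $\mmp\{X(1)\le y\}\le\exp(-c_0|\log y|^q(1+o(1)))$ as $y\downarrow0$) and \eqref{logLevy} to control the contribution of large atoms, one arrives at $b_0+b_1t^{q-\varepsilon_1}\le V(t)-c_0t^q\le a_0+a_1t^{q-\varepsilon_2}$ with $\varepsilon_1=\min(r_2,1)$, $\varepsilon_2=\min(r_1,1)$, both lying in $(q/2,q]$; since $q<2$ we have $\min(1,\varepsilon_1,\varepsilon_2)=\min(1,r_1,r_2)>q/2$, so $\gamma=q/2\in(q-\min(1,\varepsilon_1,\varepsilon_2),q)$. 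For condition (ii), $\widehat N(u)-A(u)$ is a martingale with predictable quadratic variation $A(u)$, so Doob's $L^2$-inequality gives $\me\sup_{u\le\tau}(\widehat N(u)-A(u))^2\le 4A(\tau)=O(\tau^q)$; passing to $N(s)=\widehat N(s-\log L)$ and invoking \eqref{logLevy} to bound the relevant moment of $\log L$ (a second moment when $q\ge1$, a fractional moment of order $2q$ when $q<1$) then yields $\me\sup_{s\le t}(N(s)-V(s))^2=O(t^q)=O(t^{2\gamma})$, which is \eqref{vari}.

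The principal obstacle is the dependence between the total mass $L=\sum_jL_j$ and the jump configuration $\Pi$: because $L$ is a functional of all atoms, the time shift $\log L$ in $N=\widehat N(\cdot-\log L)$ is not independent of $\widehat N$, so the passages from $\widehat N$ to $N$ in (ii) and (iii) cannot be handled by simply conditioning on $L$. I expect to circumvent this by truncating $|\log L|$ at a slowly growing level and exploiting the monotonicity of $\widehat N$ together with Poisson concentration for its increments, the event $\{|\log L|$ large$\}$ being controlled by \eqref{logLevy}. A secondary, milder technical point is the verification of the precise polynomial-correction bounds in (i), where the left-tail estimate for $X(1)$ — and hence the exact shape of \eqref{logar}, which propagates to the Laplace exponent — is essential.
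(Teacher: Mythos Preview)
Your overall strategy matches the paper's exactly: Theorem~\ref{main2001} is deduced from Theorem~\ref{main0} by verifying hypotheses (i)--(iii) for the Poisson--Kingman law with the stated parameters, and this is precisely the content of the paper's Lemma~\ref{subord10}. Your treatment of (iii) (time-changed Poisson FCLT for $\widehat N$, then absorb the a.s.\ finite shift $\log L$) and of (ii) (Doob on the compensated Poisson, then transfer to $N$) is essentially the same as the paper's, and your identification of the dependence between $L$ and $\Pi$ as the chief obstacle is accurate.

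The one genuine methodological difference is your route to condition (i). The paper never uses Mecke's formula; instead it writes $N(t)-\widehat N(t)=N_1(t)-N_2(t)$ according to $\{L\le1\}$ versus $\{L>1\}$ and bounds each expectation separately: for $N_1$ it exploits $L\le1\Rightarrow L_1\le1$ and the independence of the Poisson process beyond its first arrival from $L_1$; for $N_2$ it truncates $\{1<L\le\exp(t^\delta)\}$ versus $\{L>\exp(t^\delta)\}$ and applies H\"older plus the log-moment condition \eqref{logLevy}. Your Mecke approach gives the closed formula $V(t)=\int_0^\infty\mmp\{X(1)\le x(e^t-1)\}\,\nu({\rm d}x)$ and then compares with $m(t)=\nu([e^{-t},\infty))$ via the left-tail Chernoff bound for $X(1)$ and the right-tail log-moment; this is arguably more direct and makes the role of the Laplace exponent transparent. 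Both routes lead to bounds of the required shape, though note a small slip: the exponents you obtain will involve \emph{both} $r_1$ and $r_2$ (roughly $\varepsilon_i=\min(r_1,r_2,1)$ for $q>1$), not just one of them; this does not affect the conclusion since all three exceed $q/2$ when $q<2$. Either way the range of $\gamma$ is as you state.
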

Theorem \ref{main2001} is a consequence of Theorem \ref{main0} and
Lemma \ref{subord10} given next.
\begin{lemma}\label{subord10}
Under the assumptions of Theorem \ref{main2001} the following is
true:
\begin{itemize}
\item[\rm (a)]
\begin{equation}\label{ineq3}
\beta_2+\beta_3 t^{q-r_4}\leq V(t)- c_0 t^q \leq \alpha_2+\alpha_3
t^{q-r_3},\quad t>0
\end{equation}
for some constants $\alpha_2,\alpha_3>0$, $q\in (0,2)$, $q/2<r_3,
r_4\leq q$ and $\beta_2$, $\beta_3<0$.

\item[\rm (b)]
$$\me\sup_{s\in [0,\,t]}(N(s)-V(s))^2=O(t^q),\quad t\to\infty.$$
\item[\rm (c)]
$$\frac{N(t\cdot)-c_0(t\cdot)^q}{(c_0 t^q)^{1/2}}~\Rightarrow~ W(\cdot),\quad t\to\infty$$
in the $J_1$-topology on $D$, where $W(s)=B(s^q)$ for $s\geq 0$.
\end{itemize}
Condition \eqref{logLevy} is not needed for part (c).
\end{lemma}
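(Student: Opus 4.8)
The plan is to work throughout with the Poisson random measure $\Pi$ on $(0,\infty)$ with intensity $\nu$. Put $G(x):=\nu([x,\infty))$ (continuous, by the continuity hypothesis on $x\mapsto\nu((x,\infty))$), $M(x):=\Pi([x,\infty))$, and $\tilde N(u):=M(e^{-u})=\Pi([e^{-u},\infty))$, with mean $\tilde V(u):=\me\tilde N(u)=G(e^{-u})$. The basic identity is $N(t)=\#\{j:L_j\ge Le^{-t}\}=M(Le^{-t})=\tilde N(t-\log L)$, and $\tilde N-\tilde V$ is a martingale with compensator $\tilde V$ for the filtration $\mathcal F_u:=\sigma(\Pi|_{[e^{-u},\infty)})$. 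By \eqref{logar} (equivalently \eqref{logar2}), $\tilde V(u)=c_0u^q+O(1+u^{q-\min(r_1,r_2)})$ as $u\to\infty$. Two moment facts will be used: the largest atom $L_1$ satisfies $\mmp\{L_1<\varepsilon\}=e^{-G(\varepsilon)}$, so $\tau_1:=(-\log L_1)^+$ is an $\mathcal F$-stopping time with tail $\mmp\{\tau_1>x\}=1-e^{-G(e^{-x})}$ of Weibull type, hence $\me[\tau_1^p]<\infty$ for all $p>0$; and $L\od X(1)$ has $\me[(\log^+L)^{2q}]<\infty$ by \eqref{logLevy}, while $\mmp\{L<\varepsilon\}\le e\,e^{-\Phi(1/\varepsilon)}$ with $\Phi(\lambda)=\int(1-e^{-\lambda x})\nu({\rm d}x)\ge(1-e^{-1})G(1/\lambda)\gtrsim(\log\lambda)^q\to\infty$ gives $\me[((-\log L)^+)^p]<\infty$ for all $p>0$.

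For part (a) I would apply Mecke's formula to $N(t)=\sum_j\1_{\{L_j\ge L(\Pi)e^{-t}\}}$: adding an atom $x$ sends the total mass $L$ to $L+x$, so $V(t)=\int_{(0,\infty)}\mmp\{x\ge(L+x)e^{-t}\}\,\nu({\rm d}x)=\int_{(0,\infty)}\mmp\{L\le x(e^t-1)\}\,\nu({\rm d}x)$, which by Tonelli equals $\me[G(L/(e^t-1))]$ with $L\od X(1)$; note $V(t)\le e^t<\infty$, so all steps are legitimate. Inserting \eqref{logar2} and writing $|\log(L/(e^t-1))|=t-\log L-\delta_t$ with $\delta_t:=-\log(1-e^{-t})\downarrow 0$, it remains to compare $\me[((t-\log L-\delta_t)^+)^q]$ with $c_0t^q$: for $q\ge1$ Jensen's and Bernoulli's inequalities give two-sided control with error $O(t^{q-1})$, and for $q<1$ sub- and super-additivity of $x\mapsto x^q$ give error $O(1)$; the remainder in \eqref{logar2} contributes $O(t^{q-\min(r_1,r_2)})$, and the event $\{L\ge x_0e^t\}$ contributes $O(1)$ via $V(t)\le e^t$ and $\mmp\{L\ge x_0e^t\}=O(t^{-2q})$. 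This yields \eqref{ineq3} with $r_3=\min(r_1,1)$ and $r_4=\min(r_2,1)$, both in $(q/2,q]$ since $r_1,r_2>q/2$ and $q<2$.

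For part (b) write $N(s)-V(s)=\bigl[M(Le^{-s})-G(Le^{-s})\bigr]+\bigl[G(Le^{-s})-V(s)\bigr]$. The first, "pure Poisson", term is the delicate one, because $\tilde N$ and the normalising mass $L$ are dependent; the device that breaks this is the bound $\sup_{s\in[0,t]}(M(Le^{-s})-G(Le^{-s}))^2=\sup_{u\in[-\log L,\,t-\log L]}(\tilde N(u)-\tilde V(u))^2\le\sup_{u\le t+\tau_1}(\tilde N(u)-\tilde V(u))^2$, valid because $-\log L\le-\log L_1\le\tau_1$. Since $t+\tau_1$ is an $\mathcal F$-stopping time, Doob's $L^2$ maximal inequality applied to the stopped martingale, together with optional stopping for $(\tilde N-\tilde V)^2-\tilde V$, gives $\me\sup_{u\le t+\tau_1}(\tilde N(u)-\tilde V(u))^2\le4\,\me\tilde V(t+\tau_1)=4\,\me[G(\min(L_1,1)e^{-t})]=O(t^q)$, using $\me[\tau_1^q]<\infty$. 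For the second term, $|G(Le^{-s})-V(s)|\le|G(Le^{-s})-c_0s^q|+|V(s)-c_0s^q|$; the last summand is the deterministic $O(s^{q-\min(r_1,r_2,1)})$ of part (a), and $|G(Le^{-s})-c_0s^q|$ is controlled via \eqref{logar2} together with $|(s-\log L)^q-s^q|\le qs^{q-1}|\log L|$ (or $\le|\log L|^q$ when $q<1$); since $\me[|\log L|^{2q}]<\infty$ and $q<2$ (so $t^{2q-2}=o(t^q)$), one gets $\me\sup_{s\le t}(G(Le^{-s})-V(s))^2=o(t^q)$, the event $\{L\ge x_0e^s\}$ again contributing $O(1)$ via $N(s)\le e^s$ and $G(Le^{-s})\le\nu([1,\infty))$ there. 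Adding the two bounds proves (b). This first term is the step I expect to be the main obstacle, precisely because of the $\tilde N$–$L$ dependence; everything hinges on dominating $-\log L$ by the stopping time $\tau_1$ read off from the single largest atom.

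For part (c)—for which \eqref{logLevy} is not needed—decompose
$$\frac{N(ts)-c_0(ts)^q}{(c_0t^q)^{1/2}}=\frac{\tilde N(ts-\log L)-\tilde V(ts-\log L)}{(c_0t^q)^{1/2}}+\frac{\tilde V(ts-\log L)-\tilde V(ts)}{(c_0t^q)^{1/2}}+\frac{\tilde V(ts)-c_0(ts)^q}{(c_0t^q)^{1/2}}.$$
The third summand tends to $0$ uniformly on compacts because $\min(r_1,r_2)>q/2$; the second does so almost surely because $\tilde V(ts-\log L)-\tilde V(ts)=O((ts)^{q-1}|\log L|+1)$ while $q/2<1$ and $|\log L|<\infty$ a.s. For the first summand, $\tilde N$ being an inhomogeneous Poisson counting process with continuous increasing mean $\tilde V$ admits the representation $\tilde N(\cdot)\od\Pi^\ast(\tilde V(\cdot))$ for a unit-rate Poisson process $\Pi^\ast$; since $\tilde V(ts)/(c_0t^q)\to s^q$ uniformly on compacts, Donsker's functional CLT $\bigl(\Pi^\ast(n\cdot)-n\cdot\bigr)/\sqrt n\Rightarrow B$ combined with the $J_1$-continuity of composition gives $\bigl(\tilde N(ts)-\tilde V(ts)\bigr)/(c_0t^q)^{1/2}\Rightarrow B(s^q)$, and since $\log L/t\to0$ a.s. a Slutsky-type argument (joint convergence with a constant limit) together with the continuity of the shift map transfers this to the argument $ts-\log L$. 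Collecting the three pieces yields $\bigl(N(t\cdot)-c_0(t\cdot)^q\bigr)/(c_0t^q)^{1/2}\Rightarrow B((\cdot)^q)=W(\cdot)$ in the $J_1$-topology on $D$, as required.
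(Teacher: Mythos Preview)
Your proof is correct, and for parts (a) and (b) it follows a genuinely different route from the paper's; part (c) is essentially the same argument (Poisson FCLT plus time change and shift by $\log L$).

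For part (a) the paper does not compute $V(t)$ directly. It writes $N(t)-\widehat N(t)=N_1(t)-N_2(t)$ according to $\{L\le 1\}$ versus $\{L>1\}$, bounds $\me N_1(t)$ via the largest atom $L_1$, and for $\me N_2(t)$ introduces a truncation level $t^\delta$ and combines H\"older's and Markov's inequalities with the moment condition \eqref{logLevy}. Your Mecke-formula identity $V(t)=\me\,G(L/(e^t-1))$ bypasses all of this: once the moment bounds $\me(\log_+L)^{2q}<\infty$ and $\me((-\log L)^+)^p<\infty$ are in place, the estimate reduces to comparing $\me[(t-\log L-\delta_t)_+^q]$ with $t^q$, which is a one-line mean-value/subadditivity argument.

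For part (b) the paper again splits on $\{L>1\}$ versus $\{L\le1\}$, on each event uses a three-term decomposition, and on $\{L\le1\}$ reintroduces the independent shifted Poisson process via the $L_1$-trick in an explicit computational way. Your device is cleaner: the single observation that $-\log L\le\tau_1:=(-\log L_1)^+$ and that $\tau_1$ is an $\mathcal F$-stopping time lets you bound the ``pure Poisson'' piece by $\sup_{u\le t+\tau_1}(\tilde N(u)-\tilde V(u))^2$ and apply Doob's maximal inequality together with optional stopping for the compensated square to obtain the $O(t^q)$ bound in one step. The deterministic drift piece $G(Le^{-s})-V(s)$ is then handled exactly as one would expect from part (a).

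What each approach buys: the paper's argument is slightly more elementary (no Mecke formula, no optional stopping at a random time) but noticeably longer and more case-laden; your argument is shorter, structurally transparent, and makes explicit why the largest atom governs the dependence between $\tilde N$ and $L$.
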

\begin{proof}
For $t\in\mr$, set $\widehat N (t):=\#\{k\in\mn: L_k\geq e^{-t}\}$
so that $N(t)=\#\{k\in\mn: L_k/L\geq e^{-t}\}=\widehat N(t-\log
L)$. Note that $N(t)=0$ for $t<0$. Further, put
$m(t):=\nu((e^{-t},\infty))$ for $t\in\mr$ and note that $m$ is a
strictly increasing and continuous function with $m(-\infty)=0$.
In view of \eqref{logar} 
\begin{equation}\label{ineq}
\beta_0+\beta_1 t^{q-r_2} \leq m(t)-c_0 t^q \leq \alpha_0+\alpha_1
t^{q-r_1}
\end{equation}
for\footnote{Actually, \eqref{logar} only ensures that
\eqref{ineq} holds for large enough $t$. However, adjusting
$\alpha_i$ and $\beta_i$, $i=0,1$ properly one obtains
\eqref{ineq} for all $t\geq 0$. Of course, $\alpha_i$ and
$\beta_i$ in \eqref{ineq} are not necessarily the same as in
\eqref{logar}.} $t\geq 0$, where $\alpha_0,\alpha_1>0$, $q\in
(0,2)$, $q/2<r_1, r_2\leq q$ and $\beta_0, \beta_1<0$. Later, we
shall need the following consequences of \eqref{ineq}:
\begin{equation}\label{asym}
m(t)~\sim~ c_0 t^q,\quad t\to\infty
\end{equation}
and
\begin{equation}\label{dini}
\lim_{t\to\infty}\sup_{s\in
[0,\,s_0]}\Big|\frac{m(ts)}{c_0t^q}-s^q\Big|=0
\end{equation}
for all $s_0>0$. For the latter we have also used Dini's theorem.

The random process $(\widehat N (t))_{t\in\mr}$ is non-homogeneous
Poisson. In particular, $\widehat N (t)$ has a Poisson
distribution of mean $m(t)$. Let
$\mathcal{P}:=(\mathcal{P}(t))_{t\geq 0}$ denote a homogeneous
Poisson process of unit intensity. Throughout the proof we use the
representation $(\widehat
N(t))_{t\in\mr}=(\mathcal{P}(m(t))_{t\in\mr}$ which gives us a
transition from $\mathcal{P}$ to $\widehat{N}$. The converse
transition, namely that the arrival times of $\mathcal{P}$ are
$m(-\log L_1)$, $m(-\log L_2),\ldots$ is secured by our assumption
that $m$ is strictly increasing and continuous (this assumption is
not needed to guarantee the direct transition).

\noindent (a) Write
\begin{eqnarray*}
N(t)-\widehat N(t)&=&(\widehat N(t-\log L)-\widehat
N(t))\1_{\{L\leq 1\}}- (\widehat N(t)-\widehat N(t-\log
L))\1_{\{L>1\}}=:N_1(t)-N_2(t)
\end{eqnarray*}
and observe that
\begin{eqnarray}
N_1(t)&\leq& (\widehat N(t-\log L_1)-\widehat N(t))\1_{\{L_1\leq
1\}}\leq (1+\mathcal{P}^\ast(m(t-\log L_1)-m(-\log
L_1))\notag\\&-&\mathcal{P}^\ast(m(t)-m(-\log L_1)))\1_{\{L_1\leq
1\}},\label{inter222}
\end{eqnarray}
where $\mathcal{P}^\ast:=(\mathcal{P}^\ast(t))_{t\geq 0}$ is a
homogeneous Poisson process of unit intensity which is independent
of $L_1$. More precisely, the arrival times of $\mathcal{P}^\ast$
are $m(-\log L_2)-m(-\log L_1)$, $m(-\log L_3)-m(-\log
L_1),\ldots$. For later use we note that
\begin{eqnarray}\label{inter223}
&&((\mathcal{P}^\ast(m(t-\log L_1)-m(-\log
L_1))-\mathcal{P}^\ast(m(t)-m(-\log L_1)))\1_{\{e^{-t}\leq L_1\leq
1\}})_{t\geq 0}\notag\\&~\od~&((\mathcal{P}^\ast(m(t-\log
L_1))-\mathcal{P}^\ast(m(t)))\1_{\{e^{-t}\leq L_1\leq 1\}})_{t\geq
0},
\end{eqnarray}
where $\od$ means that the distributions of the processes are the
same. Inequality \eqref{inter222} entails
$$\me N_1(t)\leq (1+\me (m(t-\log L_1)-m(t\vee (-\log
L_1)))\1_{\{L_1\leq 1\}}\leq (1+\me (m(t-\log
L_1)-m(t)))\1_{\{L_1\leq 1\}}.$$ In view of \eqref{ineq} for
$t,x\geq 0$
\begin{eqnarray}
&&m(t+x)-m(t)\notag\\&\leq& c_0
((t+x)^q-t^q)+\alpha_0+\alpha_1(x+t)^{q-r_1}+|\beta_0|+|\beta_1|t^{q-r_2}\leq
c_0(x^q \1_{\{q\in
(0,1]\}}\notag\\&+&q(x^{q-1}+t^{q-1})x\1_{\{q\in
(1,2)\}})+\alpha_0+\alpha_1(x^{q-r_1}+t^{q-r_1})+|\beta_0|+|\beta_1|t^{q-r_2}.\label{inter2121}
\end{eqnarray}
We have used subadditivity of $x\mapsto x^\kappa$ on
$\mr_+:=[0,\infty)$ when $\kappa\in (0,1]$ and the mean value
theorem for differentiable functions to obtain
$(t+x)^\kappa-t^\kappa=\kappa x(t+x)^{\kappa-1}$ when $\kappa>1$.
We infer
\begin{equation}\label{mom}
\me (\log_-L_1)^\alpha<\infty\quad \text{for any}~\alpha>0
\end{equation}
as a consequence of $\int_1^\infty y^{\alpha-1}\mmp\{-\log
L_1>y\}{\rm d}y=\int_1^\infty y^{\alpha-1}e^{-m(y)}{\rm
d}y<\infty$, where the finiteness is justified by \eqref{ineq}.
Hence,
\begin{eqnarray*}
\me N_1(t)&\leq& 1+c_0(\me (\log_-L_1)^q \1_{\{q\in (0,1]\}}+q(\me
(\log_-L_1)^q +t^{q-1}\me(\log_-L_1)\1_{\{q\in
(1,2)\}})+\alpha_0\\&+&\alpha_1(\me
(\log_-L_1)^{q-r_1}+t^{q-r_1})+|\beta_0|+|\beta_1|t^{q-r_2}.
\end{eqnarray*}
Thus, the right-hand inequality in part (a) holds with
$r_3=r_1\wedge r_2$ when $q\in (0,1]$ and $r_3=r_1\wedge r_2\wedge
1$ when $q\in (1,2)$.

To analyse $N_2(t)$, set $\theta:=q$ if $q\in (0,1]$ and
$\theta:=q/(2-q)$ if $q\in (1,2)$ and then pick $\varepsilon>0$
such that $\theta+\varepsilon\leq 2q$ when $q\in (0,3/2)$ and take
the same $\varepsilon$ as in \eqref{logLevy} when $q\in [3/2, 2)$.
Further, choose $\delta<1-(q\vee 1)/2$ and $\varrho_1>1$
sufficiently close to one to ensure that
$r_5:=(\theta+\varepsilon)\delta/\varrho_1>q/2$. Put
$\varrho_2:=\varrho_1/(\varrho_1-1)$. It holds that
\begin{eqnarray*}
N_2(t)&=&(\widehat N(t)-\widehat N(t-\log L))\1_{\{1<L\leq
\exp(t^\delta)\}}+(\widehat N(t)-\widehat N(t-\log
L))\1_{\{L>\exp(t^\delta)\}}\\&\leq& (\widehat N(t)-\widehat
N(t-t^\delta))+\widehat N(t)\1_{\{L>\exp(t^\delta)\}}.
\end{eqnarray*}
Condition \eqref{logLevy} ensures that $\me (\log_+
L)^{\theta+\varepsilon}<\infty$ by Theorem 25.3 in
\cite{Sato:1999}. A combination of H\"{o}lder's and Markov's
inequalities yields
\begin{eqnarray*}
\me \widehat N(t)\1_{\{L>\exp(t^\delta)\}}&\leq& (\me (\widehat
N(t))^{\varrho_2})^{1/\varrho_2}(\mmp\{\log
L>t^\delta\})^{1/\varrho_1}\\&\leq& (\me (\widehat
N(t))^{\varrho_2})^{1/\varrho_2}(\me (\log_+
L)^{\theta+\varepsilon})^{1/\varrho_1}
t^{-(\theta+\varepsilon)\delta/\varrho_1}.
\end{eqnarray*}
Since $\widehat N(t)$ has a Poisson distribution of mean $m(t)$,
and $m(t)$ satisfies \eqref{asym}, the right-hand side does not
exceed $\alpha_5+\alpha_4 t^{q-r_5}$ for $t\geq 0$ and some
$\alpha_4, \alpha_5>0$.

Further, using \eqref{ineq} we obtain for $t\geq 0$
\begin{eqnarray*}
\me (\widehat N(t)-\widehat
N(t-t^\delta))&=&m(t)-m(t-t^\delta)\leq c_0 (t^{\delta
q}\1_{\{q\in (0,1]\}}+qt^{q-1+\delta}\1_{\{q\in
(1,2)\}})+\alpha_0+\alpha_1
t^{q-r_1}\\&+&|\beta_0|+|\beta_1|t^{q-r_2}\leq \alpha_7+ \alpha_6
t^{q-r_6}.
\end{eqnarray*}
By our choice of $\delta$, $r_6$ satisfies $r_6>q/2$. We have
proved the left-hand inequality in part (a) with $r_4:=r_5\wedge
r_6$.

\noindent (b) Having written
\begin{eqnarray*}
&&\me\sup_{s\in[0,\,t]}(N(s)-V(s))^2\1_{\{L>1\}}\\&\leq&
3\Big(\me\sup_{s\in[0,\,t]}(\mathcal{P}(m(s-\log L))-m(s-\log
L))^2\1_{\{L>1\}}\\&+&\me\sup_{s\in[0,\,t]}(m(s-\log
L)-m(s))^2\1_{\{L>1\}}+\sup_{s\in[0,\,t]}(m(s)-V(s))^2\Big),
\end{eqnarray*}
we intend to show that  each of the three terms on the
right-hand side is $O(t^q)$.

\noindent {\sc 1st summand}. Recall that
$(\mathcal{P}(t)-t)_{t\geq 0}$ is a martingale with respect to the
natural filtration. Using
$$\sup_{s\in[0,\,t]}(\mathcal{P}(m(s-\log L))-m(s-\log
L))^2\1_{\{L>1\}}\leq \sup_{s\in (-\infty,\,
t]}(\mathcal{P}(m(s))-m(s))^2\leq
\sup_{s\in[0,\,m(t)]}(\mathcal{P}(s)-s)^2$$ and then invoking
Doob's inequality we obtain
\begin{eqnarray*}
\me\sup_{s\in[0,\,t]}(\mathcal{P}(m(s-\log L))-m(s-\log
L))^2\1_{\{L>1\}}&\leq&
\me\sup_{s\in[0,\,m(t)]}(\mathcal{P}(s)-s)^2\\&\leq& 4\me
(\mathcal{P}(m(t))-m(t))^2= 4m(t)=O(t^q).
\end{eqnarray*}

\noindent {\sc 2nd summand}. The following inequalities hold
\begin{eqnarray}
&&\me \sup_{s\in[0,\,t]}(m(s)-m(s-\log
L))^2\1_{\{L>1\}}\notag\\&\leq& (m(t)-m(0))^2\mmp\{\log L>t\}+\me
\sup_{s\in [0,\,t-\log L]}(m(s+\log L)-m(s))^2\1_{\{0<\log L\leq
t\}}\notag\\&\leq& (m(t)-m(0))^2\mmp\{\log L>t\}+\me \sup_{s\in
[0,\,t]}(m(s+\log L)-m(s))^2\1_{\{\log L>0\}}.\label{inter23}
\end{eqnarray}
Note that \eqref{logLevy} entails $\me (\log_+L)^{2q}<\infty$. In
view of \eqref{asym} the first summand on the right-hand side of
\eqref{inter23} is $O(1)$ by Markov's inequality. Using
\eqref{inter2121} in combination with $\me (\log_+L)^{2q}<\infty$
we conclude that the second summand on the right-hand side of
\eqref{inter23} is $O(t^q)$.

\noindent {\sc 3rd summand}. Using \eqref{ineq3} and \eqref{ineq}
yields $\sup_{s\in[0,\,t]}(m(s)-V(s))^2\leq \sup_{s\in
[0,\,t]}(C_1+C_2s^{q-r})^2=O(t^{2q-2r})$ for appropriate constants
$C_1$, $C_2$ and $q/2<r\leq q$. The latter inequality ensures that
$\sup_{s\in[0,\,t]}(m(s)-V(s))^2=O(t^q)$.

To deal with the expectation in question on the event $\{L\leq
1\}$ we write
\begin{eqnarray*}
&&\me\sup_{s\in[0,\,t]}(N(s)-V(s))^2\1_{\{L\leq 1\}}\\&\leq&
3\Big(\me\sup_{s\in[0,\,t]}(\mathcal{P}(m(s-\log
L))-\mathcal{P}(m(s)))^2\1_{\{L\leq
1\}}\\&+&\me\sup_{s\in[0,\,t]}(\mathcal{P}(m(s))-m(s))^2+\sup_{s\in[0,\,t]}(m(s)-V(s))^2\Big).
\end{eqnarray*}
We already know from the previous part of the proof, that the
second and the third summand on the right-hand side are $O(t^q)$.
As for the first summand, we use \eqref{inter222} and
\eqref{inter223} to obtain
\begin{eqnarray*}
&&\me \sup_{s\in[0,\,t]}(\mathcal{P}(m(s-\log
L))-\mathcal{P}(m(s)))^2\1_{\{L\leq 1\}}\\&\leq& \me
(1+\mathcal{P}^\ast(m(t-\log L_1))-m(-\log L_1))^2\1_{\{-\log
L_1>t\}}\\&+&\me \sup_{s\in[-\log
L_1,\,t]}(1+\mathcal{P}^\ast(m(s-\log
L_1))-\mathcal{P}(m(s)))^2\1_{\{0\leq -\log L_1\leq t\}}
\end{eqnarray*}
The principal asymptotic term of the first summand is $\me
(m(t-\log L_1)-m(t))^2\1_{\{-\log L_1>t\}}$. Invoking
\eqref{inter2121} and \eqref{mom} we infer that the last
expression is $o(1)$. To estimate the second summand we write
\begin{eqnarray*}
&&\me \sup_{s\in[-\log L_1,\,t]}(\mathcal{P}^\ast(m(s-\log
L_1))-\mathcal{P}(m(s)))^2\1_{\{0\leq -\log L_1\leq t\}}\\&\leq&
3\Big(\me\sup_{s\in[-\log L_1,\,t]}(\mathcal{P}^\ast (m(s-\log
L_1))-m(s-\log L_1))^2\1_{\{0\leq -\log L_1\leq
t\}}\\&+&\me\sup_{s\in[0,\,t]}(\mathcal{P}^\ast(m(s))-m(s))^2+\me
\sup_{s\in[0,\,t]}(m(s-\log L_1)-m(s))^2\1_{\{-\log L_1\geq
0\}}\Big)\\&\leq& 3\Big(2\me\sup_{s\in[0,\,2t]}(\mathcal{P}^\ast
(m(s))-m(s))^2+\me \sup_{s\in[0,\,t]}(m(s-\log
L_1)-m(s))^2\1_{\{-\log L_1\geq 0\}}\Big).
\end{eqnarray*}
The last expression is $O(t^q)$ which can be seen by mimicking the
arguments used in the previous part of the proof.

\noindent (c) A specialization of the functional limit theorem for
the renewal processes with finite variance (see, for instance,
Theorem 3.1 on p.~162 in \cite{Gut:2009}) yields
\begin{equation}\label{intermed}
\frac{\mathcal{P}(t\cdot)-(t\cdot)}{t^{1/2}}~\Rightarrow~
B(\cdot),\quad t\to\infty
\end{equation}
in the $J_1$-topology on $D$.

It is well-known (see, for instance, Lemma 2.3 on p.~159 in
\cite{Gut:2009}) that the composition mapping $(x, \varphi)\mapsto
(x\circ \varphi)$ is continuous at continuous functions $x:\mr_+
\to \mr$ and nondecreasing continuous functions $\varphi: \mr_+\to
\mr_+$. This observation taken together with \eqref{intermed} and
\eqref{dini} enables us to conclude that
\begin{equation}\label{intermed2}
\frac{\widehat N(t\cdot)-m(t\cdot)}{(c_0 t^q)^{1/2}}~\Rightarrow~
W(\cdot),\quad t\to\infty
\end{equation}
in the $J_1$-topology on $D$. Further, we have for all $s_0, t>0$
$$\sup_{s\in [0,\, s_0]}|m(ts)-c_0 ts|\leq \max (\alpha_0+\alpha_1
(ts_0)^{q-r_1}, |\beta_0|+|\beta_1|(ts_0)^{q-r_2})$$ whence
$$\lim_{t\to\infty} t^{-q/2}\sup_{s\in [0,\, s_0]}|m(ts)-c_0 ts|=0,$$ where the assumption
$r_1,r_2>q/2$ has to be recalled. This enables us to replace in
\eqref{intermed2} $m(t\cdot)$ with $c_0 (t\cdot)^q$ thereby giving
\begin{equation}\label{fclt}
\frac{\widehat N(t\cdot)-c_0 (t\cdot)^q }{(c_0
t^q)^{1/2}}~\Rightarrow~ W(\cdot),\quad t\to\infty
\end{equation}
in the $J_1$-topology on $D$.

By Skorohod's theorem it is possible to define $\widetilde N$ and
$W$ on the same probability space, so that limit relation
\eqref{fclt} holds in the $J_1$-topology on $D$ almost surely,
hence also for $t-\log L$ in place of $t$. Now, in the centering
and normalization functions for $\widetilde N(t-\log L)$ we wish
to replace $t-\log L$ by $t$. This is trivial as far as the
normalization is concerned, for $(t-\log L)^{q/2}\sim t^{q/2}$
a.s.\ as $t\to\infty$. As for the centering, it suffices to show
that, for all $s_0>0$,
\begin{equation}\label{cent}
t^{-q/2}\sup_{s\in [0,\,s_0]}|(ts)^q-((t-\log L)s)^q|=t^{-q/2}
s_0^q|t^q-(t-\log L)^q| ~\tp~0,\quad t\to\infty.
\end{equation}
If $q\in (0,1]$, then $|t^q-(t-\log L)^q|\leq |\log L|^q$ by
subadditivity of $x\mapsto x^q$ on $\mr_+$. If $q\in (1,2)$, then
$|t^q-(t-\log L)^q|\leq q|\log L|(t+|\log L|)^{q-1}$ by the mean
value theorem for differentiable functions. These inequalities
entail \eqref{cent} which completes the proof of part (c) and the
lemma.
\end{proof}




\begin{thebibliography}{99}

\footnotesize

\bibitem{Adler:1990} R.~J. Adler, \textit{An introduction to continuity, extrema, and
related topics for general Gaussian processes}. Institute of
Mathematical Statistics, 1990.

\bibitem{Alsmeyer+Iksanov+Marynych:2017} G. Alsmeyer, A. Iksanov and A. Marynych,
\textit{Functional limit theorems for the number of occupied boxes
in the Bernoulli sieve}. Stoch. Proc. Appl. \textbf{127} (2017),
995--1017.

\bibitem{Arratia+Barbour+Tavare:2003} R. Arratia, A.~D. Barbour and S. Tavar\'{e}, \textit{Logarithmic combinatorial structures:
a probabilistic approach}. European Mathematical Society, 2003.

\bibitem{Azmoodeh etal:2014} E. Azmoodeh, T. Sottinen,
L. Viitasaari and A. Yazigi, \textit{Necessary and sufficient
conditions for H\"{o}lder continuity of Gaussian processes}. Stat.
Probab. Letters. \textbf{94} (2014), 230--235.

\bibitem{Barbour+Gnedin:2006} A.~D. Barbour and A. Gnedin, \textit{Regenerative compositions in the case of slow variation}.
Stoch. Proc. Appl. \textbf{116} (2006), 1012--1047.

\bibitem{Barbour+Gnedin:2009} A.~D. Barbour and A.~V. Gnedin, \textit{Small counts in the infinite
occupancy scheme}. Electron. J. Probab. \textbf{14} (2009),
365--384.

\bibitem{Billingsley:1968} P. Billingsley, \textit{Convergence of probability
measures}. Wiley, 1968.

\bibitem{Hamou+Boucheron+Ohannesian:2017} A. Ben-Hamou, S. Boucheron and M.~I. Ohannessian, \textit{Concentration inequalities
in the infinite urn scheme for occupancy counts and the missing
mass, with applications}. Bernoulli. \textbf{23} (2017), 249--287.

\bibitem{Bertoin:2008} J. Bertoin, \textit{Asymptotic regimes for the occupancy scheme of multiplicative
cascades}. Stoch. Proc. Appl. \textbf{118} (2008), 1586--1605.

\bibitem{Billingsley:1968} P. Billingsley, \textit{Convergence of probability
measures}. Wiley, 1968.

\bibitem{Bingham:1972} N.~H. Bingham, \textit{Limit theorems for regenerative phenomena,
recurrent events and renewal theory}. Z. Wahrsch. Verw. Gebiete.
\textbf{21} (1972), 20--44.

\bibitem{Businger:2017} S. Businger, \textit{Asymptotics of the occupancy scheme
in a random environment and its applications to tries}. Discrete
Mathematics and Theoretical Computer Science. \textbf{19} (2017),
\#22.

\bibitem{Chebunin:2017+} M. Chebunin, \textit{On the accuracy of the poissonisation in the
infinite occupancy scheme}. Preprint available at {\tt https://arxiv.org/pdf/1712.03487.pdf}

\bibitem{Chebunin+Kovalevskii:2016} M. Chebunin and A. Kovalevskii,
\textit{Functional central limit theorems for certain statistics
in an infinite urn scheme}. Statist. Probab. Letters. \textbf{119}
(2016), 344--348.

\bibitem{Durieu+Samorodnitsky+Wang:2017+} O. Durieu, G. Samorodnitsky and Y. Wang,
\textit{From infinite urn schemes to self-similar stable
processes}. Preprint available at {\tt
https://arxiv.org/pdf/1710.08058.pdf}

\bibitem{Duchamps+Pitman+Tang:2017+} J.-J. Duchamps, J. Pitman and W. Tang,
\textit{Renewal sequences and record chains related to multiple
zeta sums}. Preprint available at {\tt
https://arxiv.org/pdf/1707.07776.pdf} To appear in Trans. Amer.
Math. Soc.

\bibitem{Forman+Haulk+Pitman} N. Forman, C. Haulk and J. Pitman,
\textit{Representation of exchangeable hierarchies by sampling
from random real trees}. Probab. Theory Relat. Fields.
\textbf{172} (2018), 1--29.

\bibitem{Ghosal} S. Ghosal and A. van der Vaart, \textit{Fundamentals of nonparametric Bayesian inference}. Cambridge University Press, 2017.

\bibitem{Gnedin:2004} A.~V. Gnedin, \textit{The Bernoulli sieve}. Bernoulli \textbf{10} (2004), 79--96.

\bibitem{Gnedin+Hansen+Pitman:2007} A. Gnedin, A. Hansen and J. Pitman,
\textit{Notes on the occupancy problem with infinitely many boxes:
general asymptotics and power laws}. Probab. Surv. \textbf{4}
(2007), 146--171.

\bibitem{Gnedin+Iksanov:2012} A. Gnedin and A. Iksanov, \textit{Regenerative compositions in the case of slow variation: a renewal theory approach}.
Electron. J. Probab. \textbf{\bf 17} (2012), paper no. 77, 19 pp.

\bibitem{Gnedin+Iksanov+Marynych:2010b} A. Gnedin, A. Iksanov, and A. Marynych, \textit{The Bernoulli sieve: an
overview}. In Proceedings of the 21st International Meeting on
Probabilistic, Combinatorial, and Asymptotic Methods in the
Analysis of Algorithms (AofA-2010), Discrete Math. Theor. Comput.
Sci. \textbf{AM} (2010), 329--341.

\bibitem{Gnedin+Pitman+Yor:2006} A. Gnedin, J. Pitman and M. Yor, \textit{Asymptotic laws for compositions derived from transformed subordinators}.
Ann. Probab. \textbf{34} (2006), 468--492.

\bibitem{Gnedin+Pitman+Yor:2006a} A. Gnedin, J. Pitman and M. Yor, \textit{Asymptotic laws for regenerative compositions: gamma subordinators
and the like}. Probab. Theory Relat. Fields. \textbf{135} (2006),
576--602.

\bibitem{Gut:2009} A. Gut, \textit{Stopped random walks: limit theorems and applications},
second ed., Springer, 2009.

\bibitem{Hu+Nualart+Song}
Y. Hu, D. Nualart and J. Song, \textit{Fractional martingales and
characterization of the fractional Brownian motion}. Ann. Probab.
\textbf{37} (2009), 2404--2430.

\bibitem{Iksanov:2016} A. Iksanov, \textit{Renewal theory for perturbed random walks and similar processes}. Birkh\"{a}user, 2016.

\bibitem{Iksanov+Jedidi+Bouzeffour:2017} A. Iksanov, W. Jedidi and F. Bouzeffour, \textit{A law of the iterated logarithm for the number of occupied boxes in the Bernoulli sieve}. Statist. Probab. Letters. \textbf{126} (2017), 244--252.

\bibitem{Ishwaran+James} H. Ishwaran and L.~F. James, \textit{Gibbs sampling methods for stick-breaking priors}. J. Amer. Stat. Assoc.
\textbf{96}, no.~ 453 (2001), 161--173.


\bibitem{Joseph:2011} A. Joseph, \textit{A phase transition for the heights of a fragmentation
tree}. Random Structures and Algorithms. \textbf{39} (2011),
247--274.

\bibitem{Karlin:1967} S. Karlin, \textit{Central limit theorems for certain infinite
urn schemes}. J. Math. Mech. \textbf{17} (1967), 373--401.

\bibitem{Pitman:2003} J. Pitman, \textit{Poisson-Kingman partitions}. 
IMS Lecture Notes Monogr. Ser., \textbf{40}, Inst. Math. Statist.,
Beachwood, OH (2003), 1--34.

\bibitem{CSP} J. Pitman, \textit{Combinatorial stochastic processes}. Springer, 2006.  

\bibitem{Pitman+Tang:2017+} J. Pitman and W. Tang, \textit{Regenerative random permutations of integers}.
Preprint available at {\tt https://arxiv.org/pdf/1704.01166.pdf}.
To appear in Ann. Probab.

\bibitem{Pitman+Yakubovich:2017} J. Pitman and Y. Yakubovich, \textit{Extremes and gaps in sampling from a GEM random
discrete distribution}. Electron. J. Probab. \textbf{22} (2017), no. 44, 1--26.

\bibitem{Pitman+Yakubovich:2017b} J. Pitman and Y. Yakubovich, \textit{Ordered and size-biased frequencies
in GEM and Gibbs' models for species sampling}. Ann. Appl. Probab.
\textbf{28} (2018), 1793--1820.

\bibitem{Robert+Simatos:2009} P. Robert and F. Simatos, \textit{Occupancy schemes associated to Yule processes}. Adv. Appl. Probab. \textbf{41} (2009), 600--622.

\bibitem{Rosenthal:1970} H.~P. Rosenthal, \textit{On the subspaces of $L^p (p > 2)$ spanned by sequences of independent
random variables}. Israel J. Math. \textbf{8} (1970), 273--303.

\bibitem{Sato:1999} K. Sato, \textit{L\'{e}vy processes and infinitely divisible
distributions}. Cambridge University Press, 1999.

\end{thebibliography}
\end{document}